\Crefname{paragraph}{Section}{Sections}
\newcommand\rouge[1]{{\color{black} #1}}
\newcommand\kev[1]{{\color{red} #1}}
\newcommand{\ensemblenombre}[1]{\mathbb{#1}}
\newcommand{\N}{\ensemblenombre{N}}
\newcommand{\Z}{\ensemblenombre{Z}}
\newcommand{\R}{} 
\renewcommand{\R}{\ensemblenombre{R}}
\newcommand{\C}{\ensemblenombre{C}}
\newcommand{\T}{\ensemblenombre{T}}
\newcommand{\RE}{\mathrm{Re}}
\newcommand{\IM}{\mathrm{Im}}
\renewcommand{\leq}{\leqslant}
\renewcommand{\geq}{\geqslant}
\newcommand{\norme}[1]{\left\lVert#1\right\rVert}
\theoremstyle{plain} 
\newtheorem{prop}{Proposition}[section] 
\newtheorem{theorem}[prop]{Theorem}
\newtheorem{lemma}[prop]{Lemma}
\newtheorem{cor}[prop]{Corollary}
\theoremstyle{definition}
\newtheorem{definition}[prop]{Definition}
\newtheorem{remark}[prop]{Remark}
\newtheorem{ass}[prop]{Assumption}
\numberwithin{equation}{section}
\let\original@addcontentsline\addcontentsline
\newcommand{\dummy@addcontentsline}[3]{}
\newcommand{\DeactivateToc}{\let\addcontentsline\dummy@addcontentsline}
\newcommand{\ActivateToc}{\let\addcontentsline\original@addcontentsline}
\begin{document}

\title{
Schr\"odinger eigenfunctions sharing 
the same modulus and applications to 
the control of quantum systems}

\author{Ugo Boscain, K\'{e}vin Le Balc'h, Mario Sigalotti
}

\maketitle

\begin{abstract}
In this paper we investigate when
linearly independent eigenfunctions of the Schr\"odinger operator 
may have the same modulus. 
General properties are established and the one-dimensional case is treated in full generality. 
The study is motivated by its application to the bilinear control of the Schr\"odinger equation. By assuming that the potentials of interaction satisfy a saturation property and by adapting a strategy recently proposed by Duca and Nersesyan, we 
discuss when the system can be steered arbitrarily fast between energy levels. Extensions of the previous results to quantum graphs are finally presented.
\end{abstract}

\kev{Updated version from June, 2025 after publication in MCSS, https://doi.org/10.1007/s00498-025-00406-y. With respect to the published version we clarified the following points:
\begin{itemize}
\item Properties of the eigenfunctions on the disk in \Cref{prop:eigendisk}.
\item Definition of the saturation cones in \eqref{eq:defHNsubspacerecurrence}.
\item Proof of the small-time isomodulus approximate controllability result in \Cref{th:smalltimepointphase}.
\item Proof of the verification of the saturation hypothesis in \Cref{lem:testdensityHinfty}.
\end{itemize}
}

\tableofcontents

\section{Introduction}

\subsection{Control of quantum systems as a motivation}

 Consider a controlled (bilinear) quantum system
\begin{equation}
i\dot\psi(t)=\left(H_0+\sum_{j=1}^m u_j(t)H_j\right)\psi(t),\qquad t \in [0,+\infty).\label{uranio}
\end{equation}
In \eqref{uranio}, 
$\psi(t)$ represents the state of the system, evolving in a Hilbert space $\mathcal H$, and $u(t) = (u_1(t), \dots, u_m(t)) \in \R^m$ is the control that can take arbitrarily large values.

One of the fundamental questions in quantum control is the following:
is it possible to induce a transition from an energy level to another one in arbitrarily small time? Since in \eqref{uranio} quantum decoherence is neglected, the model may only be applicable for small times. This is why results of controllability  in arbitrarily small time are particularly important for applications.

In the case in which the Hilbert space $\mathcal H$ where $\psi$ is evolving is finite-dimensional, this problem is well understood. Actually 
small-time 
controllability 
is possible if and only if the 
evaluation at each point 
of the unit sphere of the
Lie algebra ${\rm Lie}\{i H_1,\ldots,i H_m \}
$ generated by the (right-invariant) vector fields of the controlled part
 is of maximal rank, 
 see for instance \cite[Theorem 2]{agrachev2017note}. 
As a consequence, 
small-time
controllability 
is never possible when the operators $H_1,\ldots, H_m$ 
commute pairwise and in particular when $m=1$.

The case of an 
infinite-dimensional
Hilbert space could be very different due to the possible unboundedness of $H_0$.
This happens, for instance, 
for the 
controlled PDE 
\begin{align}
&i \partial_t \psi = - \Delta_g \psi + V(x)
\psi + \sum_{j=1}^mu_j(t) Q_j(x) \psi,\qquad t \in [0,+\infty),\label{plutonio}
\end{align}
on a Riemannian manifold $(M,g)$.  Here $\Delta_g$ is the Laplace--Beltrami operator, 
$V$ is  the \emph{internal potential}, $Q_1,\ldots Q_m$ are \emph{potentials of interaction}, and $H_0$ is the unbounded operator $-\Delta_g  + V$.
 Notice that in this case 
 we are precisely in the situation in which the multiplication operators $H_1=Q_1,\ldots, H_m=Q_m$ 
 commute pairwise.

In the last years, this subject attracted the attention of several researchers. A first example of system of the form 
\eqref{uranio} evolving in an infinite-dimensional space  with 
$m=1$ and in which transitions among energy levels in arbitrarily small time are possible has been presented in \cite{BCC12}.
Although very interesting, such an example is academic  
($i\partial_t\psi=-\left(\frac{d^2}{dx^2}\right)^{\alpha}\psi+u(t)\cos(x)\psi$,  on the circle $\mathbb S^1$, $\alpha>5/2$) and for almost ten years 
it seemed a safe bet 
that for systems 
in the form \eqref{plutonio}
such phenomenon never occurs. 
Several 
results in this direction have  been obtained  (see, 
for  instance, \cite{beauchard2014minimal,BCT18}
 and \cite{BBS21}). 

A surprising result going in the opposite direction was provided more recently by Duca and Nersesyan \cite{DN21}, who proved that on the $d$-dimensional torus, for systems of the type \eqref{plutonio} with $V=0$, certain transitions among 
energy levels
are possible in arbitrarily small time. In particular it was proven that it is possible to reach states of arbitrarily large energy in arbitrarily small time.
This result has been obtained under an assumption on the potentials of interaction called  saturation hypothesis (see Section~\ref{sec:bilinearquantumsystemsmanifolds} for more details).
Such type of hypothesis  was inspired by a similar one introduced by Agrachev and Sarychev for the control of the Navier--Stokes equations in \cite{AS05} and \cite{AS06}. 

The result from \cite{DN21} was 
extended and proven in a simpler form in \cite{CP22}. In particular in that paper examples of systems evolving on manifolds with a topology different from that of the torus were studied. After the appearance on ArXiv of the first version of the present paper, the first examples of small-time globally approximately controllable bilinear Schrödinger equations were exhibited in the preprint \cite{BP24}.

The techniques developed in \cite{DN21} and \cite{CP22} actually permit to prove that (under the  saturation hypothesis)    it is possible to  steer in arbitrarily small time a state $\psi(x)$ arbitrarily close to a state of the form $e^{i\theta(x)}\psi(x)$ for 
some real-valued function $\theta(x)$. 

Then the possibility of making an arbitrarily fast transition among  different energy levels is reduced to the problem of {\em the existence of eigenfunctions corresponding to different energy levels that share the same modulus}. 
This is the property that we study in this paper.

To clarify the situation, let us discuss briefly the case of the circle $\R/2\pi\Z$. The eigenvalues of  $H_0=-\frac{d^2}{d x^2}$ are $\lambda_k=k^2$, $k=0,1,2,\ldots$ The fundamental state  ($k=0$) is non-degenerate and corresponds to the real \rouge{constant} eigenfunction $\phi_0\equiv \frac1{\sqrt{2 \pi}}$. The other eigenvalues ($k\geq1$) are  double and correspond to the real eigenfunctions   $\phi_k^c(x)=\frac1{\sqrt{\pi}}\cos(k x)$ and $\phi_k^s(x)=\frac1{\sqrt{\pi}}\sin(k x)$.
Although these eigenfunctions  do not differ by a factor of phase only, a suitable linear combination in each eigenspace permits to construct eigenfunctions all with modulus $1/\sqrt{2\pi}$: 
\begin{equation}
    \phi_k^\pm(x)= \frac{1}{\sqrt{2}}(\phi_k^c(x)\pm i \phi_k^s(x)) = \frac1{\sqrt{2 \pi}}e^{\pm i k x},\qquad k=1,2,\ldots
    \label{stronzio}
\end{equation}

With the technique developed by Duca and Nersesyan, it is possible to induce a transfer of population in arbitrarily small time among the eigenfunctions \eqref{stronzio}. This can be done approximately and  suitable  potentials of interaction are necessary. 

The purpose of this paper
is to study whether
the presence of linearly independent eigenfunctions sharing the same modulus is a specific feature of this problem or can occur in more general situations.

\rouge{\subsection{Organization of the paper and main results}

The structure of the paper is the following. 

In Section \ref{s-basic} we give some basic definitions and examples concerning Schrödinger operators having eigenfunctions sharing the same modulus. We focus on the torus, the sphere, the unit disk, and the Euclidean space. For the disk $\mathbb D$, we notably prove that two eigenfunctions of different energy levels cannot share the same modulus while it is possible to find two independent eigenfunctions of the same energy level that share the same modulus, see \Cref{prop:eigendisk}. We then introduce basic concepts about small-time approximate controllability, we introduce the saturation hypothesis and the result by Duca and Nersesyan \cite{DN21}. We then state in \Cref{th:smalltimepointphase} a new result  concerning small-time approximate controllability between states sharing the same modulus adapted to manifolds with boundary, in the spirit of the ones given by Duca and Nersesyan for the torus  \cite{DN21} and Chambrion and Pozzoli for manifolds without boundary \cite{CP22}.   This result leads to the possibility of making small-time transitions among states having the same energy for the disk $\mathbb D$, see \Cref{cor:transitiondisk} below. 

In Section~\ref{s-moduli} we study  conditions for Schr\"odinger operators to admit eigenfunctions sharing the same modulus in the general case and in the one-dimensional case (Section \ref{ss-1}).
Our main result in the one-dimensional case is Theorem~\ref{thm:1D} that we reformulate here for convenience.

\begin{theorem}
\rouge{Let $M$ be a smooth and connected one-dimensional manifold (possibly with boundary) equipped with a Riemannian metric $g$. In $L^2(M)$, consider the Schrödinger operator $H_0=-\Delta_g^2+ V$ (with $V\in L^{\infty}_{\rm loc}(M)$ bounded from below and Neumann or Dirichlet boundary conditions if $M$ has boundary) that we assume to be self-adjoint and with compact resolvent. 
If $H_0$ admits two 
$\C$-linearly independent
eigenfunctions $\phi_k$ and $\phi_\ell$
sharing the same modulus, 
then necessarily $M$ is 
a closed curve and $\phi_k$, $\phi_\ell$ are nowhere vanishing on $M$. If, moreover, 
$\phi_k$, $\phi_\ell$
correspond to distinct eigenvalues, then $V$ is constant. }
\end{theorem}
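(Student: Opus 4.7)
The plan is to exploit the conservation of the quantum current
\[
J(\phi):=\IM(\overline{\phi}\,\phi'),
\]
which is constant along any one-dimensional eigenfunction $\phi$ of $H_0$ (differentiate and use $-\phi''+V\phi=\lambda\phi$ with $V,\lambda\in\R$). The argument first eliminates the cases where $M$ has a boundary or is non-compact, and then analyses the remaining closed-curve case.

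\textbf{Step 1.} If $M$ has a boundary, the Dirichlet or Neumann condition forces $J(\phi)=0$ at that boundary and therefore everywhere on $M$. If $M$ is non-compact without boundary (so $M\cong\R$), the compactness of the resolvent with $V\in L^\infty_{\rm loc}$ bounded below forces $V(x)\to+\infty$ at infinity, so eigenfunctions and their derivatives decay (by standard Agmon-type estimates) and again $J(\phi)\equiv 0$. The key reduction is then: $J(\phi)\equiv 0$ implies $\phi$ is a constant complex multiple of a real eigenfunction. Indeed, writing $\phi=a+ib$ with $a,b$ real, both $a$ and $b$ are eigenfunctions for the same real eigenvalue, and $J(\phi)=ab'-ba'$ is the Wronskian of $a$ and $b$; its vanishing forces $\R$-linear dependence.

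\textbf{Step 2.} Applying this reduction to both $\phi_k$ and $\phi_\ell$ yields real eigenfunctions $\psi_k,\psi_\ell$ with $|\psi_k|=t|\psi_\ell|$ for some $t>0$. By ODE uniqueness the zeros of each are isolated and simple; differentiating $|\psi_k|^2=t^2|\psi_\ell|^2$ twice at a common zero $x_0$ gives $(\psi_k'(x_0))^2=t^2(\psi_\ell'(x_0))^2$, so the ratio $\psi_k/\psi_\ell$, equal to $\pm t$ away from the common zeros, extends continuously across them. On the connected manifold $M$ this ratio is then globally constant, and $\psi_k=\pm t\,\psi_\ell$ contradicts the $\C$-linear independence of $\phi_k,\phi_\ell$. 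Hence $M$ is compact without boundary, i.e., diffeomorphic to $\mathbb S^1$, which is what the paper calls a closed curve.

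\textbf{Step 3.} On such an $M$, the non-vanishing of $\phi_k$ and $\phi_\ell$ follows from the same dichotomy: a zero of $\phi_k$ would be a zero of $\phi_\ell$ (the moduli agree), forcing $J(\phi_k)=J(\phi_\ell)=0$ and reproducing the contradiction of Step~2. Writing then $\phi_k=r\,e^{i\alpha}$ and $\phi_\ell=r\,e^{i\beta}$ in polar form on the universal cover (with $r>0$ smooth and $\alpha,\beta\in C^\infty(\R)$), the imaginary parts of the two eigenvalue equations yield the conservation laws $r^2\alpha'=J_k$ and $r^2\beta'=J_\ell$ (both constants), while subtracting the two real-part equations eliminates $r''$ and $V$, leaving
\[
(\alpha')^2-(\beta')^2=\lambda_k-\lambda_\ell.
\]
Substituting the conservation laws gives $r^4=(J_k^2-J_\ell^2)/(\lambda_k-\lambda_\ell)$, constant whenever $\lambda_k\neq\lambda_\ell$. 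Hence $r$ is constant, $r''=0$, and the real-part equation collapses to $V=\lambda_k-(\alpha')^2$, which is also constant. I expect the main technical hurdle to be the non-compact case on $\R$, where one must carefully invoke compactness of the resolvent and elliptic regularity to obtain enough decay of $\phi$ and $\phi'$ to conclude that $J(\phi)$ vanishes at infinity.
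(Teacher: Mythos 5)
Your argument is correct in substance and is organized differently from the paper's, even though the conserved quantity is ultimately the same: your current $J(\phi)=\IM(\overline{\phi}\,\phi')$ equals $\rho^2\theta'$ in polar form, which is exactly the constant $C_k$ the paper extracts in Lemma~\ref{lem:inab} from the imaginary-part equation $(\rho^2\theta_k')'=0$. The genuine difference is in how the degenerate cases are dispatched. The paper works only on $M_\rho$ and treats each case separately: Neumann boundaries via simplicity of eigenvalues and Lemma~\ref{lem:simple}; Dirichlet and non-compact cases (for $\lambda_k\ne\lambda_\ell$) via non-integrability of a constant positive modulus; zeros of $\rho$ (for $\lambda_k=\lambda_\ell$) via boundedness of $\theta_k'\rho$ near a zero plus unique continuation; and $M\cong\R$ with $\lambda_k=\lambda_\ell$ via the differential inequality $\rho''>1$ where $\rho$ is small. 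You replace all of these with a single mechanism: any boundary condition, any zero of $\phi$, or the $L^2$ setting on $\R$ forces the constant $J$ to vanish; $J\equiv0$ means the Wronskian of $\RE\phi$ and $\IM\phi$ vanishes, so $\phi$ is a complex multiple of a real eigenfunction; and your ratio/L'H\^opital argument at common (simple) zeros then contradicts $\C$-linear independence. This is arguably cleaner and more uniform --- in particular it avoids the paper's separate unique-continuation and convexity arguments --- and your Step 3 derivation of $r^4=(J_k^2-J_\ell^2)/(\lambda_k-\lambda_\ell)$ and $V=\lambda_k-J_k^2/r^4$ reproduces the paper's conclusion in the distinct-eigenvalue case.

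One step as written is wrong, though easily repaired: compactness of the resolvent of $-\frac{d^2}{dx^2}+V$ on $\R$ with $V$ bounded below does \emph{not} force $V(x)\to+\infty$ pointwise (Molchanov's criterion only requires $\int_x^{x+\delta}V\to\infty$ for each fixed $\delta>0$, which tolerates $V$ staying bounded on shrinking intervals), so the Agmon-decay route to $J(\phi)\to0$ at infinity is not justified as stated. You do not need it: within the paper's framework $\mathrm{Dom}(H_0)\subset H^2(\R)$, so $\phi$ and $\phi'$ are both in $L^2(\R)$, hence $\overline{\phi}\,\phi'\in L^1(\R)$, and a constant function whose imaginary part is integrable over $\R$ must vanish; thus $J(\phi)\equiv0$ and the rest of your Step~1--2 reduction goes through unchanged.
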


One can read this result in the following way: the requirement of the existence of eigenfunctions sharing the same modulus for 
{\color{black} two different} 
eigenvalues gives a topological constraint (the manifold should be the circle) and a condition on the  potential $V$ that must be constant. 
However if we are interested in transitions in between the same eigenspace, then 
non-constant potentials are admitted. See Remark 
\ref{rem:exp} for an explicit example.

Beside the two-dimensional torus, the sphere $\mathbb{S}^2$ and the disk $\mathbb D$,
the study of other systems in  dimension 2 is complicated by the lack of explicit expressions for the eigenfunctions. 

We then study (Section \ref{s-grafi}) problems that in a sense are intermediate between dimension 1 and 2, namely problems on {\color{black}one-dimensional graphs. 
Such} graphs are obtained by gluing together one-dimensional manifolds, but their topology is much richer than the topology of one-dimensional manifolds and for certain properties they are more similar to higher dimensional manifolds. 
See, for instance, the survey \cite{Kuc02} studying spectral properties in thin structures collapsing to a graph. Note that we also establish a new result  concerning small-time approximate controllability between states sharing the same modulus adapted to quantum graphs (\Cref{th:smalltimeapproximatequantumgraphs}).

On graphs we exhibit some new phenomena. We show the existence of graphs for which

$\bullet$ there exists an infinite sequence of eigenfunctions corresponding to arbitrarily high energy levels that share the same modulus, but this is not the case for all 
energy levels. As a consequence, with an appropriate choice of potentials of interactions, we are able to prove a small-time approximate controllability result between these eigenstates (\Cref{th:resultisographeight} in
Section~\ref{subsection-graph-8}); 

$\bullet$ 
all energy levels but the ground state are degenerate, but there exists no pair of eigenfunctions sharing the same modulus and having different energy levels (Section \ref{subsection-graph-3branches}).}

 \medskip

{\bf Acknowledgements.} This work has been partly supported by the ANR project TRECOS ANR-20-CE40-0009, by the ANR-DFG project ``CoRoMo'' ANR-22-CE92-0077-01, and has received financial support from the CNRS through the MITI interdisciplinary programs.

\section{Basic setting}
\label{s-basic}
\subsection{The Schr\"odinger operator on a Riemannian manifold}

Let $M$ be a smooth and  connected manifold of dimension $d$, possibly with boundary, equipped with a \rouge{smooth} Riemannian metric $g$. We assume that $M$, endowed with the Riemannian distance, is complete as a metric space. Let $\Delta_g = \text{div}_{\omega_g} \circ \nabla_g$ be the Laplace--Beltrami operator on $(M,g)$, where $\text{div}_{\omega_g}$ is the divergence operator with respect to the Riemannian volume and $\nabla_g$ is the Riemannian gradient. When $\partial M \neq \emptyset$, we split $\partial M = \cup_{j \in I} \mathcal C_j$ where $\{C_j\mid j \in I\}$ is the set of the connected components of $\partial M$.

We define the Hilbert space $\mathcal H = L^2(M;\C)$, where integration is considered with respect to the Riemannian volume. 
For simplicity, in the following we 
write $L^2(M)$ for $L^2(M;\C)$ and, similarly, $H^k(M)$ for $H^k(M;\C)$, $k\ge 1$. Let $V \in L^{\infty}_{\rm loc}(M;\R)$ be bounded from below, $H_0 = - \Delta_g + V$, and 
define
\begin{equation}
\label{eq:defdomH0}
    \text{Dom}(H_0) := \{\psi \in H^2(M)\mid {\color{black}H_0\psi=}-\Delta_g \psi + V \psi \in L^2(M)\ \text{and}\ P_j(\psi)= 0\ 
    \text{on}\ \mathcal{C}_j\ \forall j \in I\},
\end{equation}
where, for every $j\in I$, $P_j(\psi)$ is either the trace of $\psi$ on $ \mathcal{C}_j$ or the trace of  $\partial_{\nu} \psi$ on 
$ \mathcal{C}_j$ with $\nu$ the outer unit normal vector to $M$.
In the sequel, we split $I = I_D \cup I_N$, where $I_D$ (respectively, $I_N$) corresponds to the set of indices such that $P_j(\psi)$ is the trace of $\psi$ on $\mathcal C_j$ (respectively, 
the trace of $\partial_{\nu} \psi$ on $\mathcal C_j$).

\begin{ass}
\label{ass:op}
    The unbounded Schr\"odinger operator $(H_0 ,\text{Dom}(H_0))$, {\color{black}where {\color{black}$H_0$ and } $\text{Dom}(H_0)$ are as in \eqref{eq:defdomH0},} 
    is  self-adjoint on $L^2(M)$ with compact resolvent.
\end{ass}
As a consequence of \Cref{ass:op}, from the spectral theorem (see, e.g., \cite[Theorem 6.2]{CR21}), there exists an orthonormal basis of $L^2(M;\C)$ composed of eigenfunctions $(\phi_k)_{k \geq 1}$ associated with the sequence of real eigenvalues $(\lambda_k)_{k \geq 1}$, that is for every $k \geq 1$, $\phi_k \in E_{\lambda_k} = \mathrm{Ker}(H_0-\lambda_k I)$. Let us first give some examples, that are mainly taken from \cite{Cha84}, on which we will focus in the following.
\begin{enumerate}
    \item Let $M = \T^d = (\R / 2 \pi \Z)^d$ be the $d$-dimensional torus, $\Delta_g$ be the Euclidean Laplace operator, $V \in L^{\infty}(\T^d;\R)$. The unbounded operator $(-\Delta_g + V, H^2(\T^d))$ satisfies Assumption~\ref{ass:op}.
    \item Let $M = \mathbb S^2$ be the two-dimensional sphere equipped with the standard Riemannian metric and  $\Delta_g$ be the corresponding  Laplace--Beltrami operator.
Let $V \in L^{\infty}(\mathbb S^2;\R)$. The unbounded operator  $(-\Delta_g + V, H^2(\mathbb S^2))$ satisfies Assumption~\ref{ass:op}. 
\item Let $\Omega$ be a smooth bounded connected domain of $\R^d$, $\Delta_g $ be the Euclidean Laplace operator, $\nu$ be the outer unit normal vector to $\partial \Omega$ and $V \in L^{\infty}(\Omega;\R)$. The unbounded operators $(-\Delta_g + V, H^2(\Omega) \cap H_0^1(\Omega) )$ and $(-\Delta_g + V, \{ \psi \in H^2(\Omega)\mid \partial_{\nu} \psi = 0\ \text{on}\ \partial \Omega\})$ satisfy Assumption \ref{ass:op}.
\item Let $M = \R^d$, $\Delta_g $ be the Euclidean Laplace operator, $V \in L_{\rm loc}^{\infty}(\R^d;\R)$
be such that $\lim_{\|x\| \to +\infty} V(x) = +\infty$.  Then the unbounded operator $(-\Delta_g + V, \{ \psi \in H^2(\R^d)\mid V \psi \in L^2(\R^d)\})$ satisfies Assumption \ref{ass:op}. 
\end{enumerate}

\begin{remark}
Assumption \ref{ass:op}  does not cover all interesting situations.  Consider for instance the Schrödinger operator 
\[
-\frac{d^2}{dx^2} +\frac{c}{x^2(1-x)^2},\qquad x\in (0,1).
\]
If $c\geq 3/4$ this operator is essentially self-adjoint in $L^2((0,1))$ (this is a consequence of \cite{ReedSimon}, Theorem X.10) and hence 
this situation is covered by Assumption~\ref{ass:op} (even if 
it does not satisfy the standing assumptions, since the metric space $(0,1)$ endowed with the Euclidean distance is not complete).
If $c< 3/4$ this operator is not essentially self-adjoint. Self-adjoint extensions exist, but they do not correspond to boundary conditions of Dirichlet or Neumann type. In other words the latter case is not covered by Assumption~\ref{ass:op}.
\end{remark}

\subsection{Eigenfunctions sharing the same modulus: definition and examples
}

The following definition 
introduces rigorously  
the key notion of 
eigenfunctions of the operator $H_0 = - \Delta_g + V$ sharing the same modulus. 

\begin{definition}
\label{def:pointphase}
    For $k, \ell \geq 1$, two $\C$-linearly independent eigenfunctions $\phi_k \in E_{\lambda_k}$ and $\phi_\ell \in E_{\lambda_\ell}$ are said 
 to \emph{share the same modulus}
 if 
    \begin{equation}
        |\phi_k(x)| =         |\phi_\ell(x)|,\qquad \mbox{for every }x \in M.
    \end{equation}
\end{definition}
We are interested in different occurrences of eigenfunctions 
sharing the same modulus:
\begin{itemize}
    \item For $k \geq 1$, $H_0$ may admit \emph{eigenfunctions  sharing the same modulus inside the energy level $\lambda_k$}, that is, there may exist two $\C$-linearly independent
    eigenfunctions in $E_{\lambda_k}$ that  share the same modulus;
    \item
    For $k,\ell \geq 1$,     $H_0$ may admit \emph{two eigenfunctions $\phi_k \in E_{\lambda_k}$ and $\phi_{\ell} \in E_{\lambda_{\ell}}$ sharing the same modulus and corresponding to different energy levels $\lambda_k$ and $\lambda_{\ell}$;}
    \item $H_0$ may admit \emph{eigenfunctions sharing the same modulus corresponding to all energy levels}, that is, there may exist a subsequence $(\phi_{k_j})_{j\ge 1}$ of an orthonormal basis of eigenfunctions $(\phi_{k})_{k\ge 1}$ 
     such that 
     the functions $\phi_{k_j}$ all
     share the same modulus and such that 
     $\{\lambda_k\mid k\ge 1\}=\{\lambda_{k_j}\mid j\ge 1\}$. 
\end{itemize}

Let us illustrate these notions  on some particular examples of Riemannian manifold $M$ and operator $H_0 = - \Delta_g + V$. \\

$1.$ \rouge{Let $M=\T^d$, $V=0$, and $H_0 = -\Delta$. The eigenvalues are given by
    \begin{equation}
        \label{eq:eigenvaluestorus}
        \lambda_k = n_1^2 + \dots + n_d^2,\qquad (n_1, \dots, n_d) \in \N^d,
    \end{equation}
    and for such a $d$-tuple $n=(n_1, \dots, n_d) \in \N^d$, {\color{black}the functions 
    \begin{equation}
    \label{eq:eigenfunctiontorus}
    \Phi_{k,n}^s(x) = \prod_{j=1}^d e^{i s_j n_j x_j}  ,\ \qquad s = (s_1, \dots, s_d) \in \{-1,+1\}^d,\  x = (x_1, \dots, x_d) \in \T^d
    \end{equation}
    are (complex) eigenfunctions corresponding to the eigenvalue $\lambda_k$. Not all such functions are necessarily linearly independent, since some $n_j$ may be equal to $0$. The eigenspace corresponding to $\lambda_k$ is spanned by all functions $\Phi^s_{k,n}$
    with $n$ satisfying \eqref{eq:eigenvaluestorus}. }   
    Note that $\mathrm{dim}(E_0) = 1$ and for every $\lambda_k >0$, $\mathrm{dim}(E_{\lambda_k}) \geq 2$. Indeed, for a given $d$-tuple $n=(n_1, \dots, n_d) \in \N^d \setminus \{0\}$ satisfying \eqref{eq:eigenvaluestorus}, the following two eigenfunctions are linearly independent
     \begin{equation}
    \label{eq:eigenfunctiontorusPM}
    \Phi_{k,n}^{\pm}(x) = \prod_{j=1}^d e^{i \pm n_j x_j}  ,\qquad x = (x_1, \dots, x_d) \in \T^d.
    \end{equation}
\begin{prop}
    The operator $H_0$ admits eigenfunctions sharing the same modulus 
    inside each energy level $\lambda_k > 0$. Moreover the operator $H_0$ admits
  eigenfunctions sharing the same modulus corresponding to all energy levels.
\end{prop}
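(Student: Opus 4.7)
The proof is essentially a direct exploitation of the explicit form of the eigenfunctions given in \eqref{eq:eigenfunctiontorus}--\eqref{eq:eigenfunctiontorusPM}, since products of pure imaginary exponentials have constant modulus. The plan is therefore to take the family $\Phi^\pm_{k,n}$ already constructed in the text, observe that each such function has modulus $1$ at every point of $\T^d$, and then extract the two statements.

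For the first claim (inside each energy level $\lambda_k > 0$), I would proceed as follows. Since $\lambda_k > 0$, there exists at least one $d$-tuple $n = (n_1, \dots, n_d) \in \N^d \setminus \{0\}$ satisfying \eqref{eq:eigenvaluestorus}. The text has already noted that the two eigenfunctions $\Phi^+_{k,n}$ and $\Phi^-_{k,n}$ defined in \eqref{eq:eigenfunctiontorusPM} are $\C$-linearly independent elements of $E_{\lambda_k}$. A direct computation gives
\begin{equation*}
|\Phi^{\pm}_{k,n}(x)| \;=\; \prod_{j=1}^d |e^{\pm i n_j x_j}| \;=\; 1 \quad \text{for every } x \in \T^d,
\end{equation*}
so that these two functions share the same (constant, equal to $1$) modulus on $\T^d$ in the sense of \Cref{def:pointphase}.

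For the second claim I would invoke the standard Fourier orthonormal basis: the family
\begin{equation*}
\bigl\{\, (2\pi)^{-d/2} e^{i m \cdot x} \,\bigm|\, m \in \Z^d \,\bigr\}
\end{equation*}
is an orthonormal basis of $L^2(\T^d)$ made of eigenfunctions of $-\Delta$ with eigenvalue $|m|^2$, and each element has constant modulus $(2\pi)^{-d/2}$ on $\T^d$. From this basis I would extract a subsequence $(\phi_{k_j})_{j\ge 1}$ by choosing, for each eigenvalue $\lambda$ in the spectrum $\{n_1^2+\cdots+n_d^2 \mid n \in \N^d\}$, exactly one $m \in \Z^d$ with $|m|^2 = \lambda$. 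The resulting subfamily consists of eigenfunctions corresponding to every energy level of $H_0$ and all sharing the same modulus $(2\pi)^{-d/2}$, which is precisely the third bullet of the list preceding the statement.

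There is no real obstacle here: the whole argument is a pointwise modulus computation on complex exponentials. The only mild subtlety is bookkeeping in the second claim—making sure that the extracted subsequence is indexed by a subset of an orthonormal basis, which is automatic because the Fourier basis is itself orthonormal and the selection picks distinct elements.
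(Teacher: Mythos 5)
Your proposal is correct and follows essentially the same route as the paper: both arguments reduce to the observation that the exponentials $\Phi^{s}_{k,n}$ have constant modulus, with linear independence of $\Phi^{+}_{k,n}$ and $\Phi^{-}_{k,n}$ for $n\ne 0$ giving the first claim and the constancy of the modulus across all such eigenfunctions giving the second. Your extra bookkeeping with the normalized Fourier basis $\{(2\pi)^{-d/2}e^{im\cdot x}\}$ is a slightly more careful way of matching the third bullet of the definition (extracting a subsequence of an orthonormal basis), but it is the same argument in substance.
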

\begin{proof}
Let $\lambda_k >0$ be a positive eigenvalue, then for every $n=(n_1, \dots, n_d) \in \N^d$ such that \eqref{eq:eigenvaluestorus} holds, for every $s = (s_1, \dots, s_d) \in \{-1,+1\}^k$, $\Phi_{k,n}^s$ has a constant modulus equal to $1$. This proves that $H_0$ admits $\mathrm{dim}(E_{\lambda_k}) \geq 2$ linearly independent eigenfunctions sharing the same modulus 
    inside $\lambda_k$.
    
    For $\lambda_k \neq \lambda_l$, we also have $| \Phi_{k,n}^s | = | \Phi_{l,n'}^{s'} | = 1$ so $H_0$ admits eigenfunctions sharing the same modulus corresponding to all energy levels.
\end{proof}

With this at hand, it is by now easy to construct manifolds $M$ and operators $H_0$ that have eigenfunctions sharing the same modulus. Indeed, let us set $M = \T^{d_1} \times M_2$ with $d_1 \in \N^*$ and $M_2$ a smooth, connected, complete manifold of dimension $d_2 \in \N^*$, possibly with boundary, where $d_1 + d_2 = d$. 
The eigenvalues 
of the  operator $H_0 = - \Delta_{x_1,x_2}$ on $M$
are then given by the sum of eigenvalues of $-\Delta_{x_1}$ in $\T^{d_1}$ and $-\Delta_{x_2}$ in $M_2$ and the eigenfunctions are given by the product of associated eigenfunctions. Then, let us define the eigenstates
\begin{equation*}
       \phi_{k,n}^{s_1}(x) = \Phi_{k_1,n_1}^{s_1}(x_1) \phi_{r_2}(x_2)  ,\qquad x = (x_1,x_2) \in \T^{d_1} \times M_2,
 \end{equation*}
    where $\Phi_{k_1,n_1}^{s_1}$ is defined in \eqref{eq:eigenfunctiontorus} and $\phi_{r_2}$ is an eigenfunction of $-\Delta_{x_2}$ on $M_2$ associated with the level $\lambda_{r_2}$. For two different $s_1, s_1' \in \{-1,+1\}^{d_1}$,  we have that $\phi_{k,n}^{s_1}, \phi_{k,n}^{s_1'}$ are 
    eigenfunctions sharing the same modulus inside the same energy level $\lambda_k = \lambda_{k_1} + \lambda_{r_2}$. In the same way, one can also exhibit eigenfunctions sharing the same modulus correspond to different energy levels of the type $\lambda_k = \lambda_{k_1} + \lambda_{r_2}$ and $\lambda_\ell = \lambda_{\ell_1} +  \lambda_{r_2}$. Notice that such a construction is not 
    specific to the torus $\T^{d_1}$ and it can performed on product manifolds $M = M_1 \times M_2$  for which the operator $-\Delta_{x_1}$ admits eigenfunctions sharing the same modulus on $M_1$.}\\

$2.$ Let $M = \mathbb S^2$, $V=0$, and $H_0 = -\Delta_g$. 
Denote by $Y_m^l$, $l \in \N$, $m \in \{-l,\dots, l\}$, the spherical harmonics, i.e., the eigenfunctions of $-\Delta_{\mathbb S^2}$.
Then the eigenvalue 
associated with  $Y_m^l$ is $l(l+1)$
and
\begin{equation}
\label{eq:sphericalharmonics}
    Y_l^m(\alpha,\beta) = \sqrt{\frac{2l+1}{4 \pi}\frac{(l-m)!}{(l+m)!}} P_l^m(\cos(\alpha)) e^{im \beta},
\end{equation}
where $P_l^m$ is the corresponding Legendre polynomial and $(\alpha,\beta)$ are the spherical coordinates on $\mathbb{S}^2$. 
{\color{black}
\begin{lemma}
    For every $l \in \N$, $m \in \{-l, \dots, l\}$, $Y_l^m$ and $Y_{l}^{-m}$ share the same modulus.   
\end{lemma}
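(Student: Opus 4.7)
The plan is to reduce the equality $|Y_l^m| = |Y_l^{-m}|$ to the classical symmetry relation between the associated Legendre polynomials $P_l^m$ and $P_l^{-m}$. First I would observe that in the explicit expression \eqref{eq:sphericalharmonics}, the azimuthal factor $e^{im\beta}$ has unit modulus for every real $\beta$, so only the positive prefactor $\sqrt{\tfrac{2l+1}{4\pi}\tfrac{(l-m)!}{(l+m)!}}$ and the real quantity $|P_l^m(\cos\alpha)|$ contribute to $|Y_l^m(\alpha,\beta)|$. The argument therefore becomes purely a statement about associated Legendre polynomials evaluated on $[-1,1]$, with no dependence on $\beta$.

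The key ingredient I would invoke is the standard identity
\[
P_l^{-m}(x) = (-1)^m \frac{(l-m)!}{(l+m)!}\, P_l^m(x), \qquad l \in \N,\ m \in \{-l, \dots, l\},
\]
which follows directly from the Rodrigues formula $P_l^m(x) = \frac{(-1)^m}{2^l\, l!}(1-x^2)^{m/2}\frac{d^{l+m}}{dx^{l+m}}(x^2-1)^l$ and is classical in the literature on spherical harmonics. In particular $|P_l^{-m}(x)| = \frac{(l-m)!}{(l+m)!}|P_l^m(x)|$ for every $x \in [-1,1]$.

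The verification is then a one-line algebraic cancellation. Substituting $-m$ in place of $m$ in \eqref{eq:sphericalharmonics} and taking moduli,
\[
|Y_l^{-m}(\alpha,\beta)| = \sqrt{\frac{2l+1}{4\pi}\frac{(l+m)!}{(l-m)!}}\cdot \frac{(l-m)!}{(l+m)!}\,|P_l^m(\cos\alpha)| = \sqrt{\frac{2l+1}{4\pi}\frac{(l-m)!}{(l+m)!}}\,|P_l^m(\cos\alpha)| = |Y_l^m(\alpha,\beta)|,
\]
which is the claimed equality. I do not anticipate any genuine obstacle: the proof is essentially a normalization check. The only point requiring minor care is that different references use slightly different sign and normalization conventions for $P_l^{-m}$ (Condon--Shortley phase included or not), but since we are only comparing absolute values any such $(-1)^m$ prefactor is harmless.
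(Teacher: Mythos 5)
Your proof is correct and follows essentially the same route as the paper's: both rest on the classical identity $P_l^{-m} = (-1)^m \frac{(l-m)!}{(l+m)!} P_l^m$, which the paper uses to derive $Y_l^{-m}(\alpha,\beta) = (-1)^m e^{-2im\beta}Y_l^m(\alpha,\beta)$ and hence equality of moduli, while you carry out the equivalent cancellation of the normalization factors directly. The two arguments differ only in presentation.
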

}
\begin{proof}
    Recall that each $P_l^m$ satisfies 
\begin{equation*}
    P_l^{-m} = (-1)^m \frac{(l-m)!}{(l+m)!} P_l^m,
\end{equation*}
so that
\begin{equation}
\label{eq:relationspherical}
    Y_l^{-m}(\alpha,\beta) = (-1)^m e^{-2 i m \beta} Y_l^{m}(\alpha,\beta).
\end{equation}
Therefore, $Y_m^l$ and $Y_{-m}^l$ 
share the same modulus.
\end{proof}
{\color{black}
\begin{cor}
\label{prop:pointphaseeqS2}
For each $l\ge 1$, $H_0$ admits eigenfunctions sharing the same modulus inside the energy  level $l(l+1)$.  
\end{cor}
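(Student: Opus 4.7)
The statement is an immediate consequence of the preceding lemma, once one checks $\C$-linear independence of the pair $(Y_l^m,Y_l^{-m})$ for some nonzero $m$. My plan is as follows.

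Since $l\ge 1$, I can pick $m=l\in\{1,\dots,l\}$ (any nonzero $m$ in this range would work). The preceding lemma applied with this choice of $m$ yields $|Y_l^m(\alpha,\beta)|=|Y_l^{-m}(\alpha,\beta)|$ for all $(\alpha,\beta)$, so these two functions share the same modulus in the sense of \Cref{def:pointphase}. Both belong to $E_{l(l+1)}$ since $Y_l^m$ and $Y_l^{-m}$ are eigenfunctions associated with the eigenvalue $l(l+1)$.

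It remains to verify that $Y_l^m$ and $Y_l^{-m}$ are $\C$-linearly independent. From \eqref{eq:relationspherical}, we have $Y_l^{-m}(\alpha,\beta)=(-1)^m e^{-2im\beta}Y_l^{m}(\alpha,\beta)$. Since $m\neq 0$, the factor $e^{-2im\beta}$ is a non-constant function of $\beta$, so $Y_l^{-m}$ is not a scalar multiple of $Y_l^{m}$ (one uses here that $Y_l^m$ does not vanish identically, being a nonzero eigenfunction). Hence $Y_l^m$ and $Y_l^{-m}$ are $\C$-linearly independent.

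Combining these two observations, $\{Y_l^m,Y_l^{-m}\}$ is a pair of $\C$-linearly independent eigenfunctions inside $E_{l(l+1)}$ sharing the same modulus, which is precisely the definition of $H_0$ admitting eigenfunctions sharing the same modulus inside the energy level $l(l+1)$. There is no real obstacle here: the lemma does all the work, and the only thing to be careful about is ruling out the degenerate case $m=0$, which is why the hypothesis $l\ge 1$ is used.
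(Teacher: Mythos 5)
Your proof is correct and follows essentially the same route as the paper: choose a nonzero $m\in\{1,\dots,l\}$, invoke the preceding lemma for the equality of moduli, and observe that the relation \eqref{eq:relationspherical} with the non-constant factor $e^{-2im\beta}$ rules out $\C$-linear dependence. The paper states the linear-independence step without elaboration, so your proposal merely spells out the same argument in slightly more detail.
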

\begin{proof}
{\color{black}Take $l \geq 1$ and  $m \in \{1, \dots, l\}$. Then the eigenfunctions $Y_l^m$ and $Y_{l}^{-m}$ are linearly independent, share the same modulus, and correspond to the same energy level $l(l+1)$.}
\end{proof}
}
\begin{remark}
A natural open question is the following one: for $k,l \geq 0$, $k \neq l$, do there exist $\phi_k \in E_{k(k+1)}$ and $\phi_l \in E_{l(l+1)}$ sharing the same modulus?
\end{remark}

$3.$ Let us consider  the unit disk $\mathbb D = \{(x,y) \in \R^2\mid x^2 + y^2 \le  1\}$. 
Denote by $\Delta_{D}$
the Dirichlet--Laplace operator on $\mathbb D$. From \cite[Proposition 1.2.14]{Hen06}, the eigenvalues and eigenfunctions of $-\Delta_{D}$ are given, for every $k \geq 1$, by 
    \begin{align*}
        \lambda_{0,k} &= j_{0,k}^2,\qquad 
        u_{0,k}(r, \theta) = \sqrt{\frac{1}{\pi}} \frac{1}{|J_{0}'(j_{0,k})|} J_0(j_{0,k} r),
    \end{align*}
    and, for every $n,k \geq 1$,
    \begin{align*}
        \lambda_{n,k} &= j_{n,k}^2 \quad \text{of multiplicity 2},\\
        u_{n,k}(r, \theta) &= \sqrt{\frac{2}{\pi}} \frac{1}{|J_{n}'(j_{n,k})|} J_n(j_{n,k} r) \cos(n\theta),\\
          v_{n,k}(r, \theta) &= \sqrt{\frac{2}{\pi}} \frac{1}{|J_{n}'(j_{n,k})|} J_n(j_{n,k} r) \sin(n\theta),
    \end{align*}
where $j_{n,k}$ is the $k$-th zero of the Bessel function $J_n$ and $(r,\theta)$ are the polar coordinates on $\mathbb{D}$. 

\begin{prop}
\label{prop:eigendisk}
Let $n, m \geq 0$ and $k, l \geq 1$ be such that $(n,k) \neq (m,l)$. 
Assume that $\phi_{n,k}$ and $\phi_{m,l}$ are eigenfunctions corresponding to $j_{m,l}^2$ and $j_{n,k}^2$, respectively. 
Then $\phi_{n,k}$ and $\phi_{m,l}$ do not share the same modulus. 

On the other hand, for $n \geq 1$ and $k \geq 1$, there exist two $\C$-linearly independent eigenfunctions given by 
\begin{equation}
\label{eq:psinkdisk}
    \psi_{n,k}^+ = \sqrt{\frac{2}{\pi}} \frac{1}{|J_{n}'(j_{n,k})|} J_n(j_{n,k} r) e^{in \theta},\ \psi_{n,k}^-=\sqrt{\frac{2}{\pi}} \frac{1}{|J_{n}'(j_{n,k})|} J_n(j_{n,k} r) e^{-in \theta},
\end{equation}
corresponding to the eigenvalue $j_{n,k}^2$ that 
share the same modulus. 
\end{prop}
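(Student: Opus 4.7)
The plan is to handle the two assertions separately, with the bulk of the work concentrated on the non-existence part. The key idea is that under the Bourget--Siegel disjointness of positive zeros of Bessel functions of distinct orders, $E_{j_{n,k}^2}$ equals $\mathrm{span}_\C(u_{n,k},v_{n,k})$, so any eigenfunction in it can be written, up to a nonzero scalar, as $\phi_{n,k}(r,\theta)=J_n(j_{n,k}r)f_{n,k}(\theta)$ with $f_{n,k}(\theta)=\alpha\cos(n\theta)+\beta\sin(n\theta)$ for some $\alpha,\beta\in\C$ (with the convention $\beta=0$ if $n=0$). Expanding $|f_{n,k}|^2$ in terms of $\cos(2n\theta),\sin(2n\theta)$ yields
\[
|f_{n,k}(\theta)|^2=A_{n,k}+B_{n,k}\cos(2n\theta-\varphi_{n,k}),
\]
for some real constants $A_{n,k}\geq B_{n,k}\geq 0$ (the inequality coming from $|f_{n,k}|^2\geq 0$) and $\varphi_{n,k}\in\R$.

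I would then assume $|\phi_{n,k}|=|\phi_{m,l}|$ on $\D$ and analyze the resulting identity
\[
J_n(j_{n,k}r)^2\bigl(A_{n,k}+B_{n,k}\cos(2n\theta-\varphi_{n,k})\bigr)=J_m(j_{m,l}r)^2\bigl(A_{m,l}+B_{m,l}\cos(2m\theta-\varphi_{m,l})\bigr)
\]
by first fixing a generic $r_0\in(0,1)$ on which neither Bessel factor vanishes; this forces proportionality in $\theta$ of the two angular trigonometric polynomials. When $n\neq m$, proportionality of polynomials of distinct frequencies is only possible when $B_{n,k}=B_{m,l}=0$ (so both moduli are purely radial, meaning that the two eigenfunctions are, up to scalar, of the ``pure phase'' type $\psi^\pm$); substituting back gives $J_n(j_{n,k}r)^2=c\,J_m(j_{m,l}r)^2$ on $[0,1]$ with $c>0$, which contradicts the distinct leading orders $r^{2n}$ versus $r^{2m}$ at $r=0$. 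When $n=m$ (and therefore $k\neq l$), the angular factors can indeed match, and after cancelling them one is reduced to $J_n(j_{n,k}r)^2=c\,J_n(j_{n,l}r)^2$ on $[0,1]$; counting interior zeros yields $k-1$ on one side and $l-1$ on the other, contradicting $k\neq l$.

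The second assertion reduces to a direct computation: $|\psi_{n,k}^\pm(r,\theta)|=\sqrt{2/\pi}\,|J_n(j_{n,k}r)|/|J_n'(j_{n,k})|$, independent of the sign in the exponential, so $\psi_{n,k}^+$ and $\psi_{n,k}^-$ share the same modulus; linear independence over $\C$ is immediate from $\psi_{n,k}^\pm=u_{n,k}\pm iv_{n,k}$ together with the linear independence of $u_{n,k}$ and $v_{n,k}$ for $n\geq 1$. The step I expect to require the most care is the Fourier-mode comparison in the case $n\neq m$, in particular the bookkeeping when one or both $B$-factors vanish: once the radial identity is isolated, the $r\to 0^+$ asymptotics $J_n(r)\sim r^n/(2^n n!)$ closes the argument without needing the full strength of any non-elementary result about Bessel zeros.
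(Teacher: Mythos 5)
Your proof is correct and rests on the same two ingredients as the paper's: counting the interior zeros of the radial factor $J_n(j_{n,k}\,\cdot)$ to force $k=l$, and comparing the vanishing orders $r^n$ versus $r^m$ at the origin to force $n=m$. The explicit Fourier-mode analysis of the angular modulus and the case split on $n=m$ versus $n\neq m$ are extra bookkeeping that the paper's terser argument (radial zero count for almost every $\theta$, then the $r\to 0$ asymptotics) bypasses, but the substance is the same.
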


\begin{proof}
  In order to prove  the first part of the statement, 
  assume that $\phi_{n,k}$ and $\phi_{m,l}$ share the same modulus.
  Since $\phi_{n,k}$ is a nontrivial linear combination of the functions $u_{n,k}$ and $v_{n,k}$ introduced above, it follows that for almost all $\theta$ the function $(0,1)\ni r\to \phi_{n,k}(r,\theta)$ has $k-1$ zeros. Similarly, 
 $(0,1)\ni r\to \phi_{m,l}(r,\theta)$ has $l-1$ zeros for almost all $\theta$.
Since $\phi_{n,k}$ and $\phi_{m,l}$ have the same set of zeros, $k$ and $l$ should be equal. 
By the properties of the Bessel functions, moreover, $J_n(r)\simeq r^n$ and $J_m(r)\simeq r^m$ as $r$ tends to zero. We then deduce that $n=m$.

  The second part consists in remarking that the eigenfunctions given by \eqref{eq:psinkdisk} are $\C$-linearly independent.
  \end{proof}

$4.$ Let $M = \R^d$, $V(x) = \|x\|^2$, and $H_0 = - \Delta + V = - \Delta + \|x\|^2$ be the harmonic oscillator Hamiltonian. 

The eigenfunctions of the one-dimensional harmonic oscillator $H_0 = - \partial_{x}^2 + x^2$ are the Hermite eigenfunctions 
\begin{equation}
    \label{eq:functionhermited1}
    \Phi_k(x) = \frac{1}{\sqrt{2^k k! \sqrt{\pi}}} \left(x- \frac{d}{dx}\right)^k e^{-\frac{x^2}{2}},\qquad x \in \R,\quad k\in \N.
\end{equation}
The eigenvalue corresponding to $\Phi_k$ is $2k+1$ and $\Phi_k$ can be written as $\Phi_k(x) = H_k(x) e^{-\frac{x^2}{2}}$, 
where $H_k$ is a polynomial function of degree $k$. In the multi-dimensional case, for $d \geq 2$, $\alpha = (\alpha_j)_{1 \leq j \leq d} \in \N^d$, the Hermite eigenfunctions are given by
\begin{equation}
   \label{eq:functionhermitedgeneral}
   \Phi_{\alpha}(x) = \prod_{j=1}^d \Phi_{\alpha_j}(x_j)\qquad x = (x_j)_{1 \leq j \leq d} \in \R^d,\quad \alpha\in \N^d,
\end{equation}
and $\Phi_{\alpha}$ corresponds to the eigenvalue $2 |\alpha| + d$.

\begin{prop}
\label{prop:hermitedD}
Let $d=1$. For every $k_1,k_2 \in \N$, $k_1 \neq k_2$, $H_0$ does not admit two eigenfunctions corresponding 
to the energy levels $2 k_1 + 1$ and $2 k_2+1$ that share the same modulus. 
\end{prop}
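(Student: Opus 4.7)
The plan is to exploit the explicit structure of the Hermite eigenfunctions together with the fact that the eigenvalues of the one-dimensional harmonic oscillator are simple. Indeed, on $\R$, any $L^2$ solution of $-u'' + x^2 u = \lambda u$ is determined up to a scalar by standard Wronskian/decay arguments (two $L^2$ eigenfunctions for the same eigenvalue are both in the one-dimensional subspace of decaying solutions at $+\infty$, hence proportional); alternatively, this follows directly from the fact that the Hermite functions $\Phi_k$ form a complete orthonormal basis. Thus every eigenfunction in $E_{2k+1}$ is of the form $c\,\Phi_k$ for some $c \in \C$.

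Given two eigenfunctions $\phi_{k_1} = c_1 \Phi_{k_1}$ and $\phi_{k_2} = c_2 \Phi_{k_2}$ with $k_1 \neq k_2$ and $c_1, c_2 \in \C \setminus \{0\}$ that share the same modulus, the identity $|\phi_{k_1}(x)| = |\phi_{k_2}(x)|$ on $\R$ becomes
\begin{equation*}
|c_1|\,|H_{k_1}(x)|\,e^{-x^2/2} = |c_2|\,|H_{k_2}(x)|\,e^{-x^2/2},\qquad x \in \R,
\end{equation*}
where $H_k$ is the Hermite polynomial of degree $k$. Cancelling the Gaussian factor and squaring yields the polynomial identity
\begin{equation*}
|c_1|^2 H_{k_1}(x)^2 = |c_2|^2 H_{k_2}(x)^2,\qquad x \in \R.
\end{equation*}

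From here I would conclude by a degree argument: the left-hand side is a polynomial of degree $2k_1$, the right-hand side a polynomial of degree $2k_2$, so their equality on all of $\R$ forces $k_1 = k_2$, contradicting the assumption. I expect no serious obstacle here; the only point that deserves a line of justification is the simplicity of the eigenvalues, which I would handle by citing the standard ODE Wronskian argument (or equivalently by invoking the completeness of the basis $(\Phi_k)_{k\in\N}$). A cleaner alternative, if preferred, is to argue directly that shared moduli imply shared zero sets, and $H_{k_1}$, $H_{k_2}$ have respectively $k_1$ and $k_2$ real zeros (all simple), again forcing $k_1 = k_2$.
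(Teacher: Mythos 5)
Your argument is correct and is essentially the paper's own proof: the paper also reduces to $|H_{k_1}|=|H_{k_2}|$ after cancelling the Gaussian and concludes by comparing degrees of the Hermite polynomials. The extra care you take in justifying simplicity of the eigenvalues (so that every eigenfunction is a scalar multiple of $\Phi_k$) is a worthwhile detail the paper leaves implicit, but it does not change the route.
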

\begin{proof}
    By contradiction, if $\Phi_k$ and $\Phi_\ell$ share the same modulus, then $|H_k| = |H_\ell|$, so that $H_k$ and $H_\ell$ have the same degree, which is not possible when $k\ne \ell$.
\end{proof}
Note that \Cref{prop:hermitedD} 
can  also be seen as
a consequence of \Cref{lem:simple} or \Cref{thm:1D}, see below.

\begin{remark}
     A natural open question is: 
     does the result of \Cref{prop:hermitedD} extend to the case $d \geq 2$? The difficulty is that in the contradiction argument, \rouge{we will have the equality in modulus for appropriate linear combinations of eigenfunctions, i.e.,
\begin{equation}
\left|\sum_{\alpha \in \N^d,\ |\alpha|=k_1} \gamma_{\alpha} \prod_{j=1}^d H_{\alpha_j}(x_j)\right| = \left|\sum_{\beta \in \N^d,\ |\beta|=k_2} \gamma_{\beta} \prod_{j=1}^d H_{\beta_j}(x_j)\right|\qquad x = (x_j)_{1 \leq j \leq d} \in \R^d.
\end{equation}
}
An argument based on the degree does not work anymore because a priori one can have cancellations in the previous equality, so that the polynomial on the left-hand side has the same degree as the polynomial on the right-hand side. 
\end{remark}

Eigenfunctions sharing the same modulus 
play a special role in
the bilinear control of quantum systems, as explained in the next section.

\subsection{Bilinear quantum  control systems}
\label{sec:bilinearquantumsystemsmanifolds}

Let $Q=(Q_1, \dots, Q_m)\in L^{\infty}_{\rm loc}(M;\R)^m$ be the potentials of interactions and let us consider the bilinear controlled Schr\"odinger equation
\begin{equation}
\label{eq:ScBilinear}
\left\{
\begin{array}{ll}
i \partial_t \psi = - \Delta_g \psi + V
\psi + \langle u(t), Q
\rangle_{\R^m} \psi& \text{in}\  (0,+\infty) \times M,\\
\psi = 0&\text{on}\  (0,+\infty) \times \cup_{j \in I_D}\mathcal{C}_j,\\
\partial_{\nu}\psi = 0&\text{on}\  (0,+\infty) \times \cup_{j \in I_N}\mathcal{C}_j,\\
\psi(0,\cdot) = \psi_0\quad& \text{in}\    M.
\end{array}
\right.
\end{equation}
In \eqref{eq:ScBilinear}, at time $t \in [0,+\infty)$, $\psi(t)\in L^2(M)$ is the state and $u(t) \in \R^m$ is the control.

In all the sequel, we also make the following hypothesis. 
\begin{ass}
\label{ass:qi}
   For every $j\in\{1,\dots,m\}$, $Q_j \in L^{\infty}(M;\R)$.
\end{ass}

By combining Assumptions~\ref{ass:op} and~\ref{ass:qi}, we have the following well-posedness result according to \cite{BMS82}. For every $T>0$, $\psi_0 \in L^2(M)$ and $u \in L^2(0,T;\R^d)$, there exists a unique mild solution $\psi \in C([0,T];L^2(M))$ of \eqref{eq:ScBilinear}, i.e.,
\begin{equation*}
    \psi(t) = e^{-it H_0} \psi_0 + \int_0^t e^{- i (t-s) H_0} \langle u(s), Q(x) \rangle \psi(s) ds,\qquad \forall 
    t \in [0,T].
\end{equation*}
If we assume furthermore that $\psi_0 \in \mathcal{S} = \{\psi \in L^2(M)\mid \|\psi\|_{L^2(M)} = 1\}$, then, for every $t \in [0,T]$, $\psi(t) \in \mathcal{S}$. In the sequel, for $\psi_0 \in L^2(M)$ and $u \in L^2(0,T;\R^d)$, the associated solution $\psi \in C([0,T];L^2(M))$ of \eqref{eq:ScBilinear} will be denoted by $\psi(\cdot; \psi_0, u)$.

For $\psi_0 \in L^2(M)$, let us denote
\begin{equation*}
    \mathcal{R}(\psi_0) := \{ \psi(t; \psi_0, u)\mid t \geq 0,\ u \in L^2(0,t;\R^d)\}.
\end{equation*}
From a controllability point of view, we have the following well-known negative result from \cite{Tur00}, based on \cite{BMS82} (see also \cite{BCC20} for extensions to the case of $L^1$ and impulsive controls): For every $\psi_0 \in \mathrm{Dom}(H_0) \cap \mathcal S$, the complement of $\mathcal{R}(\psi_0)$ in $\mathrm{Dom}(H_0) \cap \mathcal S$ is dense in $\mathrm{Dom}(H_0) \cap \mathcal S$, so in particular the interior of $\mathcal{R}(\psi_0)$ in $\mathrm{Dom}(H_0) \cap \mathcal S$ for the topology of $\mathrm{Dom}(H_0)$  is empty. With respect to this result of negative nature for the exact controllability, one may wonder if instead the approximate controllability holds. It is known that \eqref{eq:ScBilinear} is approximately controllable in large time, generically with respect to the parameters of the system, see \cite{MasonSigalotti2010}. Here, we will mainly focus on the small-time approximate controllability.
\begin{definition}
    An element $\psi_1 \in \mathcal{S}$ belongs to the small-time approximately reachable set from $\psi_0 \in \mathcal{S}$, denoted by $\overline{\mathcal{R}_0({\psi_0})}$, if for every $\varepsilon >0$, for every $\tau >0$, there exists $T \in (0,\tau]$ and $u \in L^2(0,T;\R^m)$ such that 
    \begin{equation*}
        \| \psi(T;\psi_0,u) - \psi_1\|_{L^2(M)} < \varepsilon.
    \end{equation*}
\end{definition}

The characterization of small-time approximately reachable sets for \eqref{eq:ScBilinear} is an open problem in general. For instance, 
{\color{black} it is shown in 
\cite{BCT18} that, for a class of systems of the type  \eqref{eq:ScBilinear} with $M = \R^d$ and for suitable
initial and final states, approximate controllability is possible but requires a positive minimal time.}
Results in the same spirit were also obtained in \cite{BBS21} by using a WKB method. Nevertheless, there are examples of 
bilinear systems for which $\overline{\mathcal{R}_0({\psi_0})}=\mathcal{S}$ for all $\psi_0\in \mathcal{S}$, see \cite{BCC12}. 

In this article, we mainly focus on a weaker notion, that we call small-time 
isomodulus
approximate controllability.
\begin{definition}
We say that \eqref{eq:ScBilinear} is 
\emph{small-time 
isomodulus 
approximately controllable}
from $\psi_0 \in \mathcal{S}$ if
    \begin{equation*}
        \{  e^{i \theta} \psi_0\mid \theta \in L^2(M;\T)\} \subset \overline{\mathcal{R}_0({\psi_0})}.
    \end{equation*}
\end{definition}

Let us investigate for particular examples of Riemannian manifolds $M$, 
operators $H_0 = - \Delta_g + V$, 
and potentials of interaction $Q_1, \dots, Q_m$ when small-time 
isomodulus
approximate controllability holds.\\

$1.$ In the case where $M$ is a torus the following result holds.

\begin{theorem}{\cite[Theorem A]{DN21}}\label{th:DN21}
Let $M = \T^d$, let $\mathcal K$ be the set 
defined by
\begin{equation*}
    \mathcal K = \{(1, 0, \dots, 0), (0,1,0 \dots, 0), \dots, (0, \dots, 0, 1, 0), (1, 1, \dots, 1)\}\subset \R^d,
\end{equation*}
and assume that 
$Q_1,\dots,Q_m$ are
such that
\begin{equation*}
    x \mapsto 1,\ x\mapsto \sin\langle x, k\rangle,\ x\mapsto \cos\langle x, k\rangle\ \in \mathrm{span} \{Q_1, \dots, Q_m\},\qquad \forall k \in \mathcal K.
\end{equation*}
Then
    \eqref{eq:ScBilinear} is small-time 
   isomodulus
   approximately controllable.
\end{theorem}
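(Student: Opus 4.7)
The strategy I would follow is the Agrachev--Sarychev / Duca--Nersesyan conjugation-and-commutation scheme, adapted to the Schrödinger equation. The heart of the argument is to exploit the unboundedness of $H_0$: by applying short impulses of large amplitude, one can approximately realize the multiplicative flow $e^{-i\theta Q}$ for any potential $Q$ in the initial span, and then enrich the set of available potentials through iterated ``Lie bracket'' manipulations.

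\textbf{Step 1: small-time reachability of $e^{-i\theta Q_j}\psi_0$.} Fix $j\in\{1,\dots,m\}$ and $\theta\in\R$. On a short interval $[0,\varepsilon]$ apply the constant control $u(t)=(\theta/\varepsilon)e_j$. Writing the mild formulation and comparing with the free evolution under the multiplier $\theta Q_j/\varepsilon$, standard Duhamel estimates yield
\[
\psi(\varepsilon;\psi_0,u)=e^{-i\theta Q_j}\psi_0+\mathcal O(\varepsilon) \quad \text{in } L^2(\T^d),
\]
so $e^{-i\theta Q_j}\psi_0\in\overline{\mathcal R_0(\psi_0)}$ for every $\theta\in\R$. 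By concatenation, $e^{-i\theta Q}\psi_0$ is reachable in small time for every $Q$ in the real linear span $\mathcal H_0$ of $\{Q_1,\dots,Q_m\}$.

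\textbf{Step 2: enlarging the set of admissible phases via commutators.} The key computation is
\[
[Q_k,[Q_j,H_0]]=2\,\nabla Q_j\cdot\nabla Q_k,
\]
which is again a bounded multiplication operator. Mimicking the Agrachev--Sarychev scaling, for a small parameter $\eta>0$ consider the control sequence that alternates $\pm Q_j/\sqrt{\eta}$ and $\pm Q_k/\sqrt{\eta}$ over subintervals of length $\eta$, with the free $H_0$-flow interleaved on intervals of length $\eta$. Using Baker--Campbell--Hausdorff expansions in the interaction picture, all first-order terms cancel and the leading contribution in $L^2$ is $\exp(-i\theta\,\nabla Q_j\cdot\nabla Q_k)\psi_0$, with an error that vanishes as $\eta\to 0$ (the total time being $\mathcal O(\eta)$, hence arbitrarily small). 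Iterating, if $\mathcal H_N$ denotes the space reached after $N$ rounds of such brackets starting from $\mathcal H_0$, every $e^{-i\theta Q}\psi_0$ with $Q\in\mathcal H_\infty=\bigcup_{N\ge0}\mathcal H_N$ lies in $\overline{\mathcal R_0(\psi_0)}$.

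\textbf{Step 3: verification of the saturation hypothesis on $\T^d$.} For $k,k'\in\Z^d$ the identity
\[
\nabla\cos\langle x,k\rangle\cdot\nabla\cos\langle x,k'\rangle=\tfrac{1}{2}\langle k,k'\rangle\bigl(\cos\langle x,k+k'\rangle+\cos\langle x,k-k'\rangle\bigr),
\]
together with its sine analogues, shows that $\mathcal H_\infty$ is stable under passing from the frequencies $k,k'$ to $k\pm k'$. Since $\mathcal K$ contains the canonical basis $e_1,\dots,e_d$ together with $e_1+\cdots+e_d$, integer linear combinations exhaust $\Z^d$, so $\mathcal H_\infty$ contains every trigonometric polynomial and is therefore dense in $C^\infty(\T^d;\R)$ for the uniform topology. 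Consequently $\{e^{-i\theta Q}\psi_0:Q\in\mathcal H_\infty,\,\theta\in\R\}$ is dense in $\{e^{i\vartheta}\psi_0:\vartheta\in L^2(\T^d;\T)\}$ for the $L^2$-norm.

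\textbf{Main obstacle.} Steps 1 and 3 are essentially algebraic and harmonic-analytic, but Step 2 is where the work lies: making the commutator limit rigorous requires careful propagator estimates in $L^2$ for the highly oscillatory, large-amplitude controls, controlling the $H_0$-induced error terms uniformly even though $H_0$ is unbounded. This is precisely the content of the Agrachev--Sarychev machinery as implemented in \cite{DN21}, and the delicate point is to show that the $\mathcal O(\eta)$-type remainders remain small in $L^2$ independently of the order of iteration $N$ when controls grow like $\eta^{-1/2}$.
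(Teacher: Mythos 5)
Your overall strategy (realize multiplicative phases in arbitrarily small time, then saturate the set of admissible phases via second-order brackets with $H_0$) is indeed the one underlying the paper, which treats this statement as a special case of the general saturation result (\Cref{th:smalltimepointphase}, proved via \Cref{thm:manifoldirichletneumann} and the exact conjugation identity of \Cref{thm:abstract}). Two remarks on your Step 2. First, since $S$ is multiplication by a smooth function, $\mathrm{ad}^3_S(H_0)=0$, so the expansion of $e^{-i\delta^{-1/2}S}e^{-i\delta(H_0+\delta^{-1}\sum_j u_jQ_j)}e^{i\delta^{-1/2}S}$ terminates exactly; the ``delicate remainder estimates uniform in $N$'' you flag as the main obstacle are not actually needed in this scheme. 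Second, and more importantly, the scheme only produces the phase $\tfrac12\mathrm{ad}^2_S(H_0)=-|\nabla\varphi|^2$, i.e.\ \emph{nonpositive} multiples of squared gradients (time runs forward), which is precisely why the saturation sets \eqref{eq:defHNsubspacerecurrence} are positive cones and not vector spaces. Your Step 2 silently assumes that $\exp(-i\theta\,\nabla Q_j\cdot\nabla Q_k)\psi_0$ is reachable for \emph{both} signs of $\theta$, which does not follow from the construction.

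This sign issue is exactly where your Step 3 breaks down. First, the identity as written has the wrong sign: $\nabla\cos\langle x,k\rangle\cdot\nabla\cos\langle x,k'\rangle=\tfrac{\langle k,k'\rangle}{2}\bigl(\cos\langle x,k-k'\rangle-\cos\langle x,k+k'\rangle\bigr)$. Second, and decisively, every such cross term carries the factor $\langle k,k'\rangle$, which vanishes for distinct canonical frequencies $e_i,e_j$; so the canonical basis alone generates nothing new, even though its integer combinations exhaust $\Z^d$. Your argument ``integer linear combinations exhaust $\Z^d$, hence density'' would apply equally well without the vector $(1,\dots,1)$, and is therefore not the correct mechanism: that vector is in $\mathcal K$ precisely to supply nonzero inner products with each $e_i$ along the generation path. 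Finally, to produce the \emph{positive} squares appearing in the polarization $-2\nabla\psi\cdot\nabla\chi=-|\nabla(\psi+\chi)|^2+|\nabla\psi|^2+|\nabla\chi|^2$, one must rewrite them as a constant minus a squared gradient, e.g.\ $\cos\langle x,k\rangle^2=1-\bigl|\nabla\bigl(\sin\langle x,k\rangle/|k|\bigr)\bigr|^2$; this is why the hypothesis includes $1\in\mathrm{span}\{Q_1,\dots,Q_m\}$, an assumption your proof never uses. The paper carries out this sign bookkeeping explicitly in \Cref{lem:testdensityHinfty} for the eight graph; the torus verification requires the same care, and without it the saturation claim in your Step 3 is unproved.
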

Note that \cite[Theorem A]{DN21} actually claims the small-time isomodulus
approximate controllability for $\psi_0 \in H^s(M) \cap \mathcal S$ for $s > d/2$ but this is due to the eventual presence of the semi-linearity in their equation.

A striking consequence of such a result is that the small-time approximate controllability among particular eigenstates holds.
\begin{cor}
{\cite[Theorem B]{DN21}}
\label{thm:chambrionpozollispheresmalltime}
 Take $V=0$. Then, we have
\begin{equation*}
    \Phi_k^{\pm} \in \overline{\mathcal{R}_0(\Phi_\ell^{\pm})},
\end{equation*}
where  $\Phi_k^{\pm},\Phi_\ell^{\pm}$ are any  eigenfunctions defined as in \eqref{eq:eigenfunctiontorus}.
\end{cor}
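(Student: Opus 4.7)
The plan is to obtain this as an almost immediate consequence of \Cref{th:DN21}, exploiting the fact that every eigenfunction of the form \eqref{eq:eigenfunctiontorus} has constant modulus $1$ on the whole torus. Since small-time isomodulus approximate controllability means that the closure of the small-time reachable set from $\psi_0$ contains every state of the form $e^{i\theta(x)}\psi_0$ with $\theta \in L^2(M;\mathbb T)$, it suffices to exhibit such a $\theta$ linking the two target eigenfunctions.

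Concretely, I would write
\[
\Phi_k^{\pm}(x) = \prod_{j=1}^d e^{i s_j n_j x_j}, \qquad
\Phi_\ell^{\pm}(x) = \prod_{j=1}^d e^{i s_j' n_j' x_j},
\]
for appropriate sign vectors $s,s' \in \{-1,+1\}^d$ and integer tuples $n,n' \in \N^d$. Since $\Phi_\ell^{\pm}$ is nowhere vanishing (in fact $|\Phi_\ell^{\pm}| \equiv 1$), the ratio is well-defined pointwise and equals a single character:
\[
\frac{\Phi_k^{\pm}(x)}{\Phi_\ell^{\pm}(x)} = e^{i\theta(x)}, \qquad
\theta(x) = \sum_{j=1}^d (s_j n_j - s_j' n_j')\, x_j.
\]
This $\theta$ is smooth and real-valued on $\T^d$, and in particular belongs to $L^2(\T^d;\T)$.

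With this identification, \Cref{th:DN21} (whose saturation hypothesis on $Q_1,\dots,Q_m$ is assumed as the standing framework of the corollary) yields
\[
\Phi_k^{\pm} = e^{i\theta}\,\Phi_\ell^{\pm} \in \overline{\mathcal{R}_0(\Phi_\ell^{\pm})},
\]
which is precisely the claim.

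I do not anticipate a serious obstacle: all the controllability work is encapsulated in \Cref{th:DN21}, and the only thing to verify is the construction of $\theta$, which is a routine computation that relies solely on the pointwise nonvanishing of $\Phi_\ell^{\pm}$. The one point deserving a line of justification is that $\theta$ is globally well-defined as an element of $L^2(\T^d;\T)$ rather than merely locally; this is immediate here because $\theta$ is literally a linear function of the coordinates, interpreted modulo $2\pi$.
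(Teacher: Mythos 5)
Your proposal is correct and matches the paper's (implicit) argument exactly: the corollary is presented as a direct consequence of \Cref{th:DN21}, justified by the observation that all the eigenfunctions $\Phi_k^\pm$ share the constant modulus $1$, so that any two differ by a phase factor $e^{i\theta(x)}$ with $\theta$ an integer linear form in the coordinates, hence a well-defined element of $L^2(\T^d;\T)$. Nothing is missing.
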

In particular, the system can be steered arbitrarily fast from one energy level to any other one. 
This is due to the fact that for every $k, \ell 
$, $\Phi_k^\pm$ and $\Phi_\ell^\pm$ 
share the same modulus.\\

$2.$ For the two-dimensional sphere the following result holds.
\begin{theorem}{\cite[Theorem 3]{CP22}}
    Let $M=\mathbb S^2$, $V=0$, $Q_1(x,y,z)=x$, $Q_2(x,y,z)=y$, $Q_3(x,y,z)=z$. Then \eqref{eq:ScBilinear} is small-time isomodulus
    approximately controllable.
\end{theorem}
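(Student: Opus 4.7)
The plan is to reduce the statement to the general small-time isomodulus approximate controllability theorem for closed Riemannian manifolds (the version of \Cref{th:smalltimepointphase} without boundary, as established in \cite{CP22}), and then to verify its hypothesis: the saturation condition for $Q_1=x$, $Q_2=y$, $Q_3=z$ on $\mathbb{S}^2$. Concretely, starting from $\mathcal{H}_0=\mathrm{span}\{1,Q_1,Q_2,Q_3\}$ and building $\mathcal{H}_{n+1}$ from $\mathcal{H}_n$ by the recursion in \eqref{eq:defHNsubspacerecurrence} (which adjoins quantities of the form $\nabla_g Q_j\cdot\nabla_g h$ together with auxiliary terms for $h\in\mathcal{H}_n$), one must show that $\bigcup_n \mathcal{H}_n$ is dense in $C^0(\mathbb{S}^2)$.

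The key computation is explicit thanks to the isometric embedding $\mathbb{S}^2\hookrightarrow\mathbb{R}^3$: the outward unit normal at $(x,y,z)\in\mathbb{S}^2$ is $(x,y,z)$ itself, so that
\begin{equation*}
 \nabla_g Q_i = e_i - Q_i\,(x,y,z),\qquad \Delta_g Q_i = -2 Q_i,
\end{equation*}
and a direct calculation yields the fundamental identity
\begin{equation*}
 \nabla_g Q_i\cdot \nabla_g Q_j = \delta_{ij} - Q_i Q_j \quad \text{on } \mathbb{S}^2.
\end{equation*}
In particular every degree-two monomial $Q_i Q_j$ lies in $\mathcal{H}_1$.

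Next I would proceed by induction on the degree to show that every polynomial in $x,y,z$ restricted to $\mathbb{S}^2$ eventually belongs to some $\mathcal{H}_n$. The inductive step uses the Leibniz rule $\nabla_g(Q_i h)=h\,\nabla_g Q_i+Q_i\,\nabla_g h$ together with the identity above to express $\nabla_g Q_j\cdot \nabla_g(Q_i h)$ as a combination of a monomial of degree one higher than $h$ and of terms already in $\mathcal{H}_n$. Since the algebra generated by $Q_1,Q_2,Q_3$ separates points of $\mathbb{S}^2$ and contains the constants, the Stone--Weierstrass theorem then gives density of $\bigcup_n\mathcal{H}_n$ in $C^0(\mathbb{S}^2)$, verifying the saturation hypothesis.

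The main obstacle is the precise book-keeping of the recursion \eqref{eq:defHNsubspacerecurrence}: the saturation step produces not pure monomials but linear combinations of several polynomials, so one must verify that after finitely many iterations the genuinely new monomials can be isolated from the lower-degree pieces already captured in $\mathcal{H}_n$. A secondary technicality is the passage from density in $C^0(\mathbb{S}^2)$ to the function-space topology demanded by \Cref{th:smalltimepointphase}, but this is standard thanks to the compactness and smoothness of the sphere and is handled exactly as in \cite{DN21,CP22}.
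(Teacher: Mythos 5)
First, a remark on the comparison you were asked for: the paper does not prove this statement itself --- it is quoted verbatim from \cite[Theorem~3]{CP22}. The relevant internal material is the general saturation theorem (\Cref{th:smalltimepointphase}, proved via \Cref{thm:manifoldirichletneumann}) and the model verification of the saturation hypothesis in \Cref{lem:testdensityHinfty}. Your overall strategy (reduce to the saturation theorem, then verify density of $\mathcal H_\infty$) is the right one, and your geometric computations ($\nabla_g Q_i=e_i-Q_i(x,y,z)$, $g(\nabla_g Q_i,\nabla_g Q_j)=\delta_{ij}-Q_iQ_j$) are correct. The gap is in the verification of the saturation hypothesis, and it is not the mere ``book-keeping'' issue you flag: as you have set it up, the argument fails.

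Two concrete problems. First, $\mathcal H_0=\operatorname{span}\{Q_1,Q_2,Q_3\}$ does \emph{not} contain the constant function $1$ (the constants are not in $\operatorname{span}\{x,y,z\}$ on $\mathbb S^2$), so your starting point $\mathcal H_0=\operatorname{span}\{1,Q_1,Q_2,Q_3\}$ is unjustified. Second, the recursion \eqref{eq:defHNsubspacerecurrence} is a \emph{cone}: it only adjoins $-\alpha\, g(\nabla_g\psi,\nabla_g\psi)$ with $\alpha\ge 0$ and $\pm\psi\in\mathcal H_N$, not arbitrary polarized cross terms $g(\nabla_g Q_i,\nabla_g h)$. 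For a linear form $\ell(p)=\langle v,p\rangle$ one has $-\alpha\, g(\nabla_g\ell,\nabla_g\ell)=\alpha(\ell^2-|v|^2)$, whose mean over $\mathbb S^2$ equals $-\tfrac{2}{3}\alpha|v|^2\le 0$. Hence every element of $\mathcal H_1$ has nonpositive mean, so the only $\psi$ with $\pm\psi\in\mathcal H_1$ are the linear ones, and the recursion stabilizes: $\mathcal H_\infty=\operatorname{span}\{x,y,z\}+\operatorname{cone}\{\ell^2-|v|^2\}$, a set of polynomials of degree at most two. In particular ``every degree-two monomial $Q_iQ_j$ lies in $\mathcal H_1$'' is false (you can never strip off the negative constant, nor obtain $-Q_i^2$ up to an admissible constant), and $\mathcal H_\infty$ is \emph{not} dense in $L^2(\mathbb S^2;\R)$. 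So \Cref{th:smalltimepointphase}, applied literally with these three potentials, does not conclude. The missing idea is to exploit that phases only matter modulo $2\pi$: all nonpositive constants lie in $\mathcal H_1$ (e.g. $\sum_i\alpha(x_i^2-1)=-2\alpha$), hence \emph{all} constant phases are realizable, and then $1-x^2=y^2+z^2=(y^2-1)+(z^2-1)+2$ shows that $+\alpha\, g(\nabla_g\ell,\nabla_g\ell)$ is also a realizable phase up to a constant. Only after recovering both signs in this way can the polarization identity isolate the monomials $Q_iQ_j$ and the induction on the degree proceed (compare the sign-tracking in \Cref{lem:testdensityHinfty}, where the constant potential is deliberately included among the $Q_j$). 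Alternatively one may add $Q_0\equiv 1$ to the family of potentials, which does not change the dynamics up to a global phase. Without one of these additional steps your proof does not close.
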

As an application, we have the following result, that claims the small-time approximate controllability between particular spherical harmonics.
\begin{cor}{\cite[Theorem 3]{CP22}}
We have
\begin{equation}
\label{eq:chambrionpozollieigen}
    Y_m^l \in \overline{\mathcal{R}_0(Y_{-m}^l)},\qquad \forall l \in \N,\ \forall m \in \{-l, \dots, l\},
\end{equation}
where $Y_m^l$ are the eigenfunctions defined in \eqref{eq:sphericalharmonics}.
\end{cor}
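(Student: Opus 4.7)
The plan is to deduce this corollary directly from the preceding theorem (small-time isomodulus approximate controllability on $\mathbb S^2$), together with the explicit symmetry relation \eqref{eq:relationspherical}.

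First, I would apply \eqref{eq:relationspherical} with $m$ replaced by $-m$ and use $(-1)^{-m}=(-1)^m$ to obtain
\[
Y_l^m(\alpha,\beta) = (-1)^m e^{2im\beta}\,Y_l^{-m}(\alpha,\beta),
\]
which expresses $Y_l^m$ as a pointwise unimodular phase rotation of $Y_l^{-m}$. The phase is $e^{i\theta(\alpha,\beta)}$ with $\theta(\alpha,\beta):=m\pi+2m\beta\pmod{2\pi}$. Since $m\in\Z$, the factor $e^{i\theta}$ is single-valued on $\mathbb S^2$: the longitude $\beta$ is defined modulo $2\pi$ away from the two poles, which form a set of measure zero, and the integrality of $m$ makes this ambiguity disappear. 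Hence $\theta$ is a perfectly legitimate element of $L^2(\mathbb S^2;\T)$.

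Next, I would invoke the preceding theorem (small-time isomodulus approximate controllability on $\mathbb S^2$ with $V=0$ and $(Q_1,Q_2,Q_3)=(x,y,z)$), applied to the initial datum $\psi_0 = Y_l^{-m}\in\mathcal S$. It yields
\[
\{\,e^{i\vartheta}Y_l^{-m}\mid \vartheta\in L^2(\mathbb S^2;\T)\,\}\subset \overline{\mathcal R_0(Y_l^{-m})}.
\]
Specialising to $\vartheta=\theta$ constructed above gives $Y_l^m\in\overline{\mathcal R_0(Y_l^{-m})}$, which is the claim.

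I do not expect any real obstacle here: the only mildly delicate point is the legitimacy of $\theta$ as an $L^2(\mathbb S^2;\T)$ function, and this is automatic, since $\T$-valued measurable functions are bounded and the integrality of $m$ removes the only possible multivaluedness issue. In spirit, this is exactly the same reduction carried out on the torus in \Cref{thm:chambrionpozollispheresmalltime} to pass from the isomodulus statement to transitions between the $\Phi_k^\pm$'s: once an identity of the form $\text{target}=e^{i\theta}\cdot\text{source}$ with a genuine $L^2(M;\T)$ phase is available, the isomodulus controllability theorem produces the desired reachability for free.
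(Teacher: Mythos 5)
Your proof is correct and follows essentially the same route as the paper: the paper deduces the statement from the isomodulus approximate controllability theorem on $\mathbb S^2$ together with the fact (from \eqref{eq:relationspherical}) that $Y_l^m$ and $Y_l^{-m}$ share the same modulus, which is exactly your explicit phase $e^{i(m\pi+2m\beta)}$. Your additional care about the single-valuedness of the phase on the sphere is a sensible (if brief) elaboration of what the paper leaves implicit.
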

Note that \cite[Theorem 3]{CP22} actually claims \eqref{eq:chambrionpozollieigen} for $l \in \N$ and $m=\pm l$ but the proof is still valid for any $m \in \{-l, \dots, l\}$ because of \Cref{prop:pointphaseeqS2}. \\

One of the  contributions of this article is the generalization of \cite[Theorem A]{DN21} and \cite[Theorem 3]{CP22} to Riemannian manifolds with boundaries, assuming that the potentials of interaction satisfy a saturation property that we describe now. 

We assume that $Q_1, \dots, Q_m \in C^{\infty}(M;\R)$ and let us define the sequence of positive cones
\begin{equation}
\label{eq:defH0subspace}
    \mathcal{H}_0 = \left\{\varphi \in \text{span}\{Q_1, \dots, Q_m\} \mid \varphi \mathrm{Dom}(H_0)  \subset \mathrm{Dom}(H_0) \right\} \subset L^2(M;\R),
\end{equation}
and iteratively for $N \geq 0$,
\begin{multline}
\label{eq:defHNsubspacerecurrence}
\mathcal H_{N+1} = \left\{ \varphi \in \mathcal{H}_N +\left\{- \alpha g(\nabla_g \psi,\nabla_g \psi) \mid \pm\psi \in \mathcal{H}_N,\ \alpha \geq 0\right\}\mid \varphi \mathrm{Dom}(H_0)  \subset \mathrm{Dom}(H_0) \right\}.
\end{multline}
Finally, we define 
\begin{equation}
\label{eq:defHinfty}
    \mathcal{H}_{\infty} = \bigcup_{N \geq 0} \mathcal{H}_N.
\end{equation}
\begin{theorem}
\label{th:smalltimepointphase}
    Assume that $\mathcal H_{\infty}$ is 
    dense in $L^2(M;\R)$. Then, \eqref{eq:ScBilinear} is small-time isomodulus approximately controllable.
\end{theorem}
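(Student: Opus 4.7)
The plan is to follow the strategy of Duca--Nersesyan \cite{DN21} and Chambrion--Pozzoli \cite{CP22}, adapting it to the presence of a boundary. \textbf{Base step.} For every $\varphi\in\mathcal{H}_0$ I would show that $e^{-i\varphi}\psi_0$ lies in $\overline{\mathcal{R}_0(\psi_0)}$. Writing $\varphi=\sum_{j=1}^m c_j Q_j$ and applying on $[0,\varepsilon]$ the constant control $u_j(t)=c_j/\varepsilon$, the associated propagator is $\exp(-i\varepsilon H_0 - i\varphi)$, which converges strongly to $e^{-i\varphi}$ as $\varepsilon\to 0$ by a Trotter-type estimate on the invariant domain $\mathrm{Dom}(H_0)$. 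The condition $\varphi\,\mathrm{Dom}(H_0)\subset\mathrm{Dom}(H_0)$ built into $\mathcal{H}_0$ is precisely what preserves the boundary conditions throughout. Since $\mathcal{H}_0$ is a real vector space, every $e^{-i\lambda\varphi}\psi_0$ with $\lambda\in\R$ is thus obtained.

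\textbf{Inductive step.} Assume that for every $\varphi\in\mathcal{H}_N$, $e^{-i\varphi}\psi_0$ lies in $\overline{\mathcal{R}_0(\psi_0)}$. To pass to $\mathcal{H}_{N+1}$, by the Minkowski-sum structure of the cone and the fact that $\overline{\mathcal{R}_0(\psi_0)}$ is stable under composition of small-time propagators, it suffices to realize $e^{i\alpha g(\nabla_g\psi,\nabla_g\psi)}\psi_0$ for every $\pm\psi\in\mathcal{H}_N$ and every $\alpha\ge 0$. The key ingredient is the double commutator identity
\begin{equation*}
[[H_0,\psi],\psi] \;=\; -2\, g(\nabla_g\psi,\nabla_g\psi),
\end{equation*}
valid on a dense core as an identity of multiplication operators. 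A BCH expansion of a palindromic product of the form
\begin{equation*}
\bigl(\,e^{it\psi}\,e^{-i\varepsilon H_0}\,e^{-2it\psi}\,e^{-i\varepsilon H_0}\,e^{it\psi}\,\bigr)^N,
\end{equation*}
taken in the regime $\varepsilon\to 0$, $t\to+\infty$, $N\to+\infty$ with $N\varepsilon\to 0$ and $N\varepsilon t^{2}\to\alpha/2$, produces the desired unitary: the first-order drift terms proportional to $[H_0,\psi]$ are killed by the palindromic symmetry, the spurious factor $\exp(-2iN\varepsilon H_0)$ tends to the identity, and the surviving contribution is $\exp(i\alpha g(\nabla_g\psi,\nabla_g\psi))$. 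Each pulse $e^{\pm it\psi}$ is small-time realizable by the induction hypothesis (this is why one requires $\pm\psi\in\mathcal{H}_N$, not merely $\psi\in\mathcal{H}_N$), and the stability condition $\varphi\,\mathrm{Dom}(H_0)\subset\mathrm{Dom}(H_0)$ appearing in \eqref{eq:defHNsubspacerecurrence} is what permits iteration without leaving the domain.

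\textbf{Conclusion by density.} For an arbitrary $\theta\in L^2(M;\T)$, identified with a real-valued $L^2$ function modulo $2\pi$, the density of $\mathcal{H}_\infty$ yields $\varphi_n\in\mathcal{H}_{N_n}$ with $\varphi_n\to\theta$ in $L^2(M;\R)$. Each $e^{-i\varphi_n}\psi_0$ lies in $\overline{\mathcal{R}_0(\psi_0)}$ by the two previous steps, and $e^{-i\varphi_n}\psi_0\to e^{-i\theta}\psi_0$ in $L^2(M;\C)$ by dominated convergence along an a.e.-converging subsequence. A diagonal extraction closes the argument.

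The hard part is the inductive step: justifying the BCH expansion and the vanishing of the remainders rigorously when $H_0$ is unbounded. As in \cite{DN21,CP22}, I would first truncate to a finite-dimensional spectral subspace of $H_0$ (available thanks to the compact-resolvent hypothesis of Assumption~\ref{ass:op}), where all operator identities reduce to finite-dimensional linear algebra and all BCH remainders are estimated in operator norm, and then pass to the limit by density of such subspaces. The novelty with respect to \cite{DN21,CP22} is the need to verify at each step that none of the intermediate multiplications takes us out of $\mathrm{Dom}(H_0)$, which is the raison d'\^{e}tre of the domain-invariance condition in \eqref{eq:defHNsubspacerecurrence}.
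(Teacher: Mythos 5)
Your overall architecture --- realize $e^{-i\varphi}\psi_0$ for $\varphi\in\mathcal H_0$ by constant controls, climb the saturation cones by extracting the double commutator $\tfrac12\mathrm{ad}_\psi^2(H_0)=-g(\nabla_g\psi,\nabla_g\psi)$ through conjugation by large multiplication phases, and conclude by density --- is the same as the paper's (\Cref{thm:limitDirichletNeumann} and \Cref{thm:manifoldirichletneumann}), and you correctly identify why $\pm\psi\in\mathcal H_N$ and the domain-invariance condition are needed. The gap is in the inductive step. The paper does not use a palindromic product with a three-parameter limit: it uses the single-conjugation limit of \cite[Theorem 8]{CP22} (\Cref{thm:abstract}), namely $e^{-i\delta^{-1/2}\varphi}\exp(-i\delta(H_0+\sum_j u_jQ_j/\delta))e^{i\delta^{-1/2}\varphi}\to\exp(\tfrac i2\mathrm{ad}_\varphi^2(H_0)-i\sum_j u_jQ_j)$ as $\delta\to0$, which is essentially your scheme with $N=1$ and $t=\delta^{-1/2}\alpha^{1/2}$; the induction is then a three-piece concatenation (reach $e^{i\gamma^{-1/2}\alpha_1^{1/2}\phi_1}e^{i\phi_0}\psi_0$ by the inductive hypothesis, run the free flow for time $\gamma$, multiply by $e^{-i\gamma^{-1/2}\alpha_1^{1/2}\phi_1}$ again) plus a triangle inequality. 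Your $N\to\infty$ product formula is strictly harder than what is needed: the BCH remainders you must kill involve commutators such as $[H_0,[H_0,\psi]]$, which are unbounded second-order differential operators that the hypothesis $\mathrm{ad}_\psi^3(H_0)=0$ does not control, and the errors from realizing $O(N)$ approximate pulses $e^{\pm it\psi}$ via the inductive hypothesis accumulate and must be handled by an extra diagonal argument that you do not spell out.

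The proposed justification of the limit by truncation to finite-dimensional spectral subspaces of $H_0$ is the concrete step that would fail, and it is not what \cite{DN21} or \cite{CP22} do. Spectral projections of $H_0$ do not commute with multiplication by $\psi$, so the compressed operators do not satisfy the identity $[[H_0,\psi],\psi]=-2g(\nabla_g\psi,\nabla_g\psi)$, the truncated pulses do not preserve $\mathrm{Dom}(H_0)$ or the boundary conditions (the very point of the domain-invariance condition in \eqref{eq:defHNsubspacerecurrence}), and the discrepancy between truncated and full dynamics cannot be made uniform in the regime $t\to+\infty$. The cited references work directly with the unbounded generator via a Duhamel/relative-boundedness argument, which is why the paper can invoke \cite[Theorem 8]{CP22} as a black box. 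A minor additional slip: since the cone adjoins $-\alpha g(\nabla_g\psi,\nabla_g\psi)$ with $\alpha\ge0$, the phase to be realized is $e^{-i\alpha g(\nabla_g\psi,\nabla_g\psi)}$ rather than $e^{+i\alpha g(\nabla_g\psi,\nabla_g\psi)}$; your own scaling $N\varepsilon t^2\to\alpha/2$ in fact produces the former.
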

It is worth mentioning that the proof of \cite[Theorem A]{DN21} directly gives \Cref{th:smalltimepointphase} when $M=\T^d$ and the proof of \cite[Theorem 3]{CP22} directly gives \Cref{th:smalltimepointphase} when $M$ is a manifold without boundary. Even if the treatment of the boundary 
 is not the most
difficult issue, 
we give a full proof of \Cref{th:smalltimepointphase} in Appendix~\ref{sec:appendixsmalltimepointphase} in an abstract setting for the sake of completeness and for the application to other quantum control systems, in particular on quantum graphs.

As an application of \Cref{th:smalltimepointphase}, one can then consider the case of the Dirichlet--Laplace operator on the disk.
\begin{cor}
\label{cor:transitiondisk}
    Assume that $Q_1, \dots, Q_m \in C^{\infty}(\mathbb D;\R)$ and $\mathcal{H}_{\infty}$ is dense in $L^2(\mathbb D;\R)$. Then, 
    \begin{equation*}
        \psi_{n,k}^+ \in \overline{\mathcal{R}_0(\psi_{n,k}^-)},\qquad \forall n,k \geq 1,
    \end{equation*}
    where the eigenfunctions $\psi_{n,k}^{\pm}$ are defined in \eqref{eq:psinkdisk}.
\end{cor}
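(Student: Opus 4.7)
The plan is to deduce this corollary as a direct consequence of \Cref{th:smalltimepointphase}. The crucial observation is that the two eigenfunctions defined in \eqref{eq:psinkdisk} satisfy the pointwise relation
\[
\psi_{n,k}^+(r,\theta) = e^{2in\theta}\,\psi_{n,k}^-(r,\theta),
\]
so they differ only by multiplication by the unimodular phase factor $e^{i\vartheta(x)}$, where $\vartheta(x)=2n\arg(x)$. Since $2n\in\N$, the function $e^{i\vartheta}$ is a well-defined bounded measurable function on $\mathbb D$ (smooth on $\mathbb D\setminus\{0\}$, the origin being a null set), so $\vartheta$ represents an element of $L^2(\mathbb D;\T)$. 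This is of course consistent with the second part of \Cref{prop:eigendisk}, which asserts that $\psi_{n,k}^+$ and $\psi_{n,k}^-$ share the same modulus.

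Given this, I would then apply \Cref{th:smalltimepointphase}: the density of $\mathcal H_\infty$ in $L^2(\mathbb D;\R)$ ensures that the bilinear Schrödinger equation \eqref{eq:ScBilinear} on $\mathbb D$, with the chosen Dirichlet boundary conditions, is small-time isomodulus approximately controllable from every element of $\mathcal S$, and in particular from an $L^2$-normalization of $\psi_{n,k}^-$. By the very definition of small-time isomodulus approximate controllability, the set $\{e^{i\vartheta}\psi_{n,k}^-\mid \vartheta\in L^2(\mathbb D;\T)\}$ is contained in $\overline{\mathcal R_0(\psi_{n,k}^-)}$. Specializing to the choice $\vartheta=2n\arg$ yields $\psi_{n,k}^+\in\overline{\mathcal R_0(\psi_{n,k}^-)}$, which is the desired conclusion.

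There is no substantial obstacle: the corollary is essentially a reformulation of \Cref{th:smalltimepointphase} in the specific eigenstate-to-eigenstate setting of the disk, made possible by the fact that the eigenfunctions $\psi_{n,k}^\pm$ share the same modulus. The only mild point worth highlighting is that $e^{2in\theta}$ must descend to a well-defined function on $\mathbb D$, which relies crucially on $2n$ being an integer; this is always the case for $n\in\N^*$. All of the analytic content, in particular the handling of the Dirichlet boundary and the construction of controls realizing the phase transformation, is already encapsulated in \Cref{th:smalltimepointphase}.
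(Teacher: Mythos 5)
Your proposal is correct and coincides with the paper's intended argument: the corollary is presented there precisely as a direct application of \Cref{th:smalltimepointphase}, using the fact that $\psi_{n,k}^+=e^{2in\theta}\psi_{n,k}^-$ so that the two eigenfunctions differ by a unimodular phase in $L^2(\mathbb D;\T)$. Your remarks on the well-definedness of $e^{2in\theta}$ on $\mathbb D$ and on normalization are sensible but not points of divergence from the paper.
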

Here, the $C^{\infty}$-regularity up to the boundary of the potentials  leads to the fact that one can remove the conditions of the stabilization of $\text{Dom}(H_0)$ by the functions $\varphi$ in the definitions \eqref{eq:defH0subspace} and \eqref{eq:defHNsubspacerecurrence} because it is automatically satisfied. This is specific to the homogeneous Dirichlet boundary conditions. It is not the case for homogeneous Neumann boundary conditions.

\section{Eigenfunctions sharing the same modulus}
\label{s-moduli}

\subsection{General properties}

The goal of this part is to study the implications of having two 
linearly independent 
eigenfunctions 
sharing the same modulus.

Let $H_0$ be as in Assumption~\ref{ass:op}.
Notice that we could relax the assumptions on $H_0$ in this section, not requiring its entire spectrum to be discrete, but just focusing on 
 two 
  of its eigenvalues
  $\lambda_k$ and $\lambda_\ell$ 
  with corresponding eigenfunctions $\phi_k,\phi_\ell:M\to \C$. 
 By elliptic regularity and Sobolev embeddings, see for instance \cite[Theorem 7.26, Theorem 9.11]{GT01}, 
 for every $0 < \alpha < 1$ one has that
$\phi_k$ and $\phi_\ell$ are $C^{1,\alpha}_{\rm loc}(M;\C)$.
  The one-dimensional case is specific because, by standard ODE arguments, one can prove that $\phi_k$ and $\phi_\ell$ are $C^{1,1}_{\rm loc}$.

  Let us assume that $\phi_k$ and $\phi_\ell$
  share the same modulus 
  $\rho:=|\phi_k|=|\phi_\ell|\in C(M,[0,+\infty))$.
Set $M_\rho=\{x\in M\mid \rho(x)\ne 0\}$ and  let
$\theta_k,\theta_\ell:M_\rho\to \mathbb{T}
=\R/2\pi\Z$ 
be
such that 
\begin{equation}\label{k-ell-connected}
\phi_k(x) = \rho(x) e^{i \theta_k(x)}, \quad \phi_\ell(x) = \rho(x) e^{i \theta_\ell(x)},\qquad x\in M_\rho.
\end{equation}
Notice that both $\phi_k$ and $\phi_\ell$ vanish on $M\setminus M_\rho=\{x\in M\mid \rho(x)=0\}$ and 
that, by unique continuation, see \cite{Aro57}, $M_\rho$ is dense in $M$.
Moreover, 
 for every $0 < \alpha < 1$ one has that
 $\rho|_{M_\rho}$ and $\theta_k:M_\rho\to \mathbb{T}
$ are $C^{1,\alpha}_{\rm loc}$, and even  $C^{1,1}_{\rm loc}$ in the one-dimensional case.

Let us observe the following.

\begin{lemma}\label{lem:simple}
    If $\lambda_k$ and $\lambda_\ell$ are simple and distinct eigenvalues of the Schr\"odinger operator $H_0$ with corresponding eigenfunctions $\phi_k$ and $\phi_\ell$, 
    then 
$\phi_k$ and $\phi_\ell$ cannot 
share the same modulus.
\end{lemma}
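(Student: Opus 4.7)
The plan is to reduce the statement to a real-variable problem and then factor the identity $|\phi_k|^2 = |\phi_\ell|^2$ so as to exploit unique continuation.

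\textbf{Step 1: reduction to real-valued eigenfunctions.} Since $V$ is real-valued, $H_0$ commutes with complex conjugation. Hence $\overline{\phi_k}$ is also an eigenfunction for the real eigenvalue $\lambda_k$, and by simplicity $\overline{\phi_k} = c\,\phi_k$ for some $c\in\C$ with $|c|=1$. Writing $c=e^{2i\alpha}$, the function $e^{i\alpha}\phi_k$ is real-valued; replacing $\phi_k$ by this multiple does not change its modulus. Doing the same for $\phi_\ell$, I may assume from now on that both $\phi_k$ and $\phi_\ell$ are real-valued.

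\textbf{Step 2: factorization and Baire category.} The equality $|\phi_k|=|\phi_\ell|$ now reads $\phi_k^2 = \phi_\ell^2$, i.e.\
\[
(\phi_k - \phi_\ell)(\phi_k + \phi_\ell) = 0 \quad \text{on } M.
\]
Let $A = \{x\in M : \phi_k(x)=\phi_\ell(x)\}$ and $B = \{x\in M : \phi_k(x)=-\phi_\ell(x)\}$. Both sets are closed by continuity of $\phi_k,\phi_\ell$, and $M = A\cup B$. Since $M$ is a connected manifold, hence a Baire space, one of $A,B$ has nonempty interior.

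\textbf{Step 3: unique continuation gives the contradiction.} Suppose $A$ has nonempty interior $U$ (the case of $B$ is symmetric). On $U$ one has $\phi_k = \phi_\ell$, and applying $H_0$ yields
\[
\lambda_k \phi_k = H_0 \phi_k = H_0 \phi_\ell = \lambda_\ell \phi_\ell = \lambda_\ell \phi_k \quad\text{on } U.
\]
Since $\lambda_k\neq\lambda_\ell$, this forces $\phi_k\equiv 0$ on the open set $U$. By Aronszajn's unique continuation theorem \cite{Aro57} (already invoked above to assert the density of $M_\rho$), $\phi_k$ must vanish identically on $M$, contradicting the fact that $\phi_k$ is a nonzero eigenfunction.

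The only delicate point is the reduction in Step 1, which genuinely requires the simplicity of both eigenvalues in order to conclude that a single real representative exists; without simplicity one would only obtain a real basis of each eigenspace, and the pointwise identification of $\phi_k$ and $\phi_\ell$ up to a sign in Step 2 would fail. The rest is a soft topological argument combined with unique continuation.
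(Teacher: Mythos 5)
Your proof is correct, and while the first step coincides in substance with the paper's, the second half follows a genuinely different route. The reduction to real-valued representatives via $\overline{\phi_k}=c\,\phi_k$ is exactly the content of the paper's observation that $\RE(\phi_k)$ and $\IM(\phi_k)$ are collinear real eigenfunctions (your formulation is arguably cleaner, since it avoids the paper's ``up to exchanging the roles'' caveat when one of the two parts vanishes). After that, the paper stays with the polar decomposition: simplicity forces the phase $\theta_k$ to be locally constant on $M_\rho$, so the common modulus $\rho$ satisfies $-\Delta\rho+V\rho=\lambda_j\rho$ on $M_\rho$ for both $j=k,\ell$, and subtracting the two equations on the nonempty set where $\rho\neq 0$ gives $\lambda_k=\lambda_\ell$ directly --- no unique continuation is needed for the lemma itself. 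You instead factor $\phi_k^2-\phi_\ell^2$, use the elementary topological fact that one of the two closed sets $A$, $B$ covering the connected manifold must have nonempty interior (Baire is not even needed: if $A$ has empty interior its complement is dense and contained in the closed set $B$, so $B=M$), and then invoke Aronszajn's unique continuation to propagate the vanishing of $\phi_k$ from that open set to all of $M$. Your argument therefore leans on unique continuation in an essential way where the paper's does not, but in exchange it bypasses the discussion of the phase function $\theta_k$ and its local constancy on $M_\rho$ entirely; both are complete proofs, and your concluding remark correctly identifies simplicity of \emph{both} eigenvalues as the point where the argument would break down otherwise.
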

\begin{proof}
Assume by contradiction that $\phi_k$ and $\phi_\ell$ 
share the same modulus
and let $\rho,\phi_k,\phi_\ell$ be as in 
\eqref{k-ell-connected}. 

Notice that, since $V$ is real-valued,  $\RE(\phi_k)$ and $\IM(\phi_k)$ satisfy
$$- \Delta \RE(\phi_k) + V \RE(\phi_k) = \lambda_k \RE(\phi_k),\qquad  - \Delta \IM(\phi_k) + V \IM(\phi_k) = \lambda_k \IM(\phi_k).$$
By using that $\lambda_k$ is a simple eigenvalue, we then deduce that $\RE(\phi_k)$ and $\IM(\phi_k)$ are colinear, that is, 
up to exchanging the roles of $\RE(\phi_k)$ and $\IM(\phi_k)$,
there exists $\mu_k \in \R$ such that
$\RE(\phi_k) = \mu_k \IM(\phi_k)$, i.e., $\rho \cos(\theta_k) = \mu_k \rho \sin (\theta_k)$.
We therefore deduce that
$$ \cos(\theta_k) = \mu_k  \sin (\theta_k)\qquad \text{on}\ M_{\rho}.$$
By using that $M_{\rho}$ is 
open and $\theta_k$ is continuous on $M_\rho$, we then deduce that $\theta_k$ is constant on each connected component $M_{\rho}$. As a consequence, we obtain that
$$ - \Delta \rho + V \rho = \lambda_k \rho\qquad \text{on}\ M_{\rho}.$$
By using the same argument for $\lambda_\ell$, 
we obtain
$$ - \Delta \rho + V \rho = \lambda_\ell \rho\qquad \text{on}\ M_{\rho}.$$
Hence, since $M_\rho$ is nonempty, 
$ \lambda_k = \lambda_\ell$, contradicting the hypotheses.
\end{proof}
\begin{remark}
    One may wonder if the conclusion of \Cref{lem:simple} still holds true assuming that only 
    $\lambda_k$ is simple. This turns to be false by considering the example of $M = \T$ and $H_0 = -\Delta$. Indeed, $0$ is a simple eigenvalue with corresponding  eigenfunction constantly equal to $1$, while $1$ is an eigenvalue of multiplicity $2$ with corresponding eigenfunctions $e^{ix}$ and $e^{-ix}$, and   the three eigenfunctions obviously share the same modulus.
\end{remark}

\begin{cor}\label{cor:gen-metric}
Let $M$ be a compact connected $C^{\infty}$ manifold without boundary of dimension larger than or equal to $2$. Then, generically with respect to the Riemanniann metric, 
no pair of $\C$-linearly independent eigenfunctions
of the Laplace--Beltrami operator $-\Delta_g$ 
share the same modulus.
\end{cor}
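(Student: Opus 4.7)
The plan is to reduce \Cref{cor:gen-metric} to \Cref{lem:simple} by invoking the classical genericity theorem of Uhlenbeck on the simplicity of Laplace--Beltrami eigenvalues.

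The first step is to recall Uhlenbeck's result (``Generic properties of eigenfunctions,'' Amer.\ J.\ Math.\ 98, 1976): on a compact connected $C^{\infty}$ manifold $M$ without boundary of dimension $d \geq 2$, the set $\mathcal{G}$ of Riemannian metrics $g$ for which all eigenvalues of $-\Delta_g$ are simple is residual in the space of Riemannian metrics, endowed with a suitable Baire topology (e.g., a $C^k$-topology for $k$ large enough, or the $C^{\infty}$ Fréchet topology). This is precisely where the hypothesis $d\ge 2$ is used: on the only compact connected manifold of dimension $1$ without boundary, namely $S^1$, the spectrum is intrinsically doubly degenerate above the ground state, and in fact Section~\ref{ss-1} shows that eigenfunctions sharing the same modulus do exist.

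Second, for any metric $g\in\mathcal{G}$, every eigenspace $E_{\lambda}$ of $-\Delta_g$ is one-dimensional. Consequently, any two $\C$-linearly independent eigenfunctions $\phi_k\in E_{\lambda_k}$ and $\phi_\ell\in E_{\lambda_\ell}$ must correspond to different eigenvalues $\lambda_k\neq\lambda_\ell$: otherwise both would lie in the same one-dimensional eigenspace and would be proportional, contradicting their linear independence. Both $\lambda_k$ and $\lambda_\ell$ being simple and distinct, \Cref{lem:simple} (applied with $V=0$) immediately yields that $\phi_k$ and $\phi_\ell$ cannot share the same modulus. Since $\mathcal{G}$ is residual, this proves the claim.

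The argument is essentially two lines once Uhlenbeck's theorem is granted, so the only point that deserves care is the formulation of ``genericity'': one should specify the ambient topological space of Riemannian metrics and verify that the topology used in the statement of \Cref{cor:gen-metric} is the same as, or finer than, the one in which Uhlenbeck's theorem is proven. No additional PDE or spectral analysis is required beyond what is already contained in \Cref{lem:simple}.
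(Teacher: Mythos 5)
Your proof is correct and follows exactly the paper's route: invoke Uhlenbeck's genericity theorem (the paper cites the Uhlenbeck/Tanikawa version in the $C^\infty$ topology on the space of smooth metrics) to get simplicity of all eigenvalues for a residual set of metrics, note that two $\C$-linearly independent eigenfunctions must then correspond to distinct simple eigenvalues, and conclude by Lemma~\ref{lem:simple}. Your additional remarks on the choice of topology and on why $d\ge 2$ is needed are consistent with the paper's footnote and surrounding discussion.
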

\begin{proof}
    This is straightforward application of Lemma~\ref{lem:simple} and the well-known result by Uhlenbeck~\cite{Uhlenbeck1976,Tanikawa1979}
 \footnote{The version of Uhlenbeck's result given by Tanikawa in \cite{Tanikawa1979}
 provides the formulation in the $C^\infty$ topology and
 also  covers the case where $M$ has nonempty boundary and the Schr\"odinger operator has Dirichlet boundary conditions. One could therefore extend the statement of Corollary~\ref{cor:gen-metric} accordingly.} 
 ensuring that, given a compact connected $C^{\infty}$ manifold $M$ of dimension larger than or equal to $2$,  the set
    $$ 
    \{g \in {\cal M}\mid \text{all eigenvalues of } -\Delta_g\ \text{have multiplicity one}\},$$
is residual in the topological space ${\cal M}$, \rouge{where $\cal M$ is the set of all smooth Riemannian metrics on $M$ endowed with the $C^\infty$ topology.}
\end{proof}

\begin{remark}
 Results similar to Corollary~\ref{cor:gen-metric} 
can be obtained when considering the genericity with respect to the potential $V$ (with no restriction on the dimension of $M$). Indeed, the spectrum of the Schr\"odinger operator is 
known to be
generically simple with respect to $V$. This general statement requires precise functional analysis setups to be  rigorously formulated. 
For the case of compact connected manifolds without boundary and $C^\infty$ potentials $V$ see \cite{Albert1975}.
The case of bounded Euclidean domains with Dirichlet boundary conditions and $V\in L^\infty$ or of full Euclidean spaces with $V\in L^\infty_{\rm loc}$ such that $\lim_{\|x\|\to\infty}V(x)=+\infty$ can be found in \cite[Proposition 3.2]{MasonSigalotti2010}.
\end{remark}

\begin{lemma}
If $\mathrm{dim}(E_{\lambda_k}) \geq 2$, then $H_0$ admits eigenfunctions sharing the same modulus inside the energy level $\lambda_k$.
\end{lemma}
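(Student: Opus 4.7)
The plan is to exploit the reality of the operator $H_0 = -\Delta_g + V$ (remember $V$ is real-valued) to find a real basis of the eigenspace, and then combine the basis elements via the complex structure to produce two independent eigenfunctions whose moduli coincide pointwise.

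First I would observe that, since $H_0$ commutes with complex conjugation, the eigenspace $E_{\lambda_k}$ is invariant under the map $\phi \mapsto \overline{\phi}$. Consequently $E_{\lambda_k}$ admits a basis consisting of real-valued eigenfunctions: for any $\phi \in E_{\lambda_k}$ both $\operatorname{Re}(\phi)$ and $\operatorname{Im}(\phi)$ belong to $E_{\lambda_k}$, and a standard argument shows that $\dim_{\R}E_{\lambda_k}^{\R} = \dim_{\C}E_{\lambda_k}$, where $E_{\lambda_k}^{\R}$ denotes the real-valued elements of $E_{\lambda_k}$. In particular, from $\dim_{\C}E_{\lambda_k} \geq 2$ we can extract two $\R$-linearly independent real-valued eigenfunctions $u, v \in E_{\lambda_k}$.

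Next, I would define
\begin{equation*}
\phi_{+} = u + i v, \qquad \phi_{-} = u - i v.
\end{equation*}
Both lie in $E_{\lambda_k}$ (by linearity), and one has, pointwise on $M$,
\begin{equation*}
|\phi_{\pm}(x)|^{2} = u(x)^{2} + v(x)^{2},
\end{equation*}
so $\phi_{+}$ and $\phi_{-}$ share the same modulus. It remains to check $\C$-linear independence: since $u = (\phi_{+} + \phi_{-})/2$ and $v = (\phi_{+} - \phi_{-})/(2i)$, the complex span of $\{\phi_{+}, \phi_{-}\}$ equals that of $\{u, v\}$, and the latter has complex dimension $2$ because two real-valued, $\R$-linearly independent functions are automatically $\C$-linearly independent (separate real and imaginary parts of any complex relation).

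There is no serious obstacle here; the only point worth checking carefully is the passage from $\dim_{\C}E_{\lambda_k} \geq 2$ to the existence of two $\R$-linearly independent real-valued eigenfunctions, which follows from the invariance of $E_{\lambda_k}$ under complex conjugation as indicated above.
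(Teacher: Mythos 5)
Your proposal is correct and follows essentially the same route as the paper: extract two $\R$-linearly independent real-valued eigenfunctions $u,v$ from $E_{\lambda_k}$ (using that $V$ is real-valued) and observe that $u+iv$ and $u-iv$ are $\C$-linearly independent with $|u\pm iv|^2=u^2+v^2$. Your write-up is in fact slightly more careful, as it spells out both the passage to a real basis and the independence check that the paper leaves implicit.
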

\begin{proof}
Let $\phi_{k_1}$ and $\phi_{k_2}$ be two linearly independent \rouge{real-valued} eigenfunctions corresponding to the same eigenvalue $\lambda_k$. Then $
 \phi_{k_1} + i \phi_{k_2}$ and $\phi_{k_1} - i \phi_{k_1}$ are two linearly independent eigenfunctions sharing the same modulus inside the energy level $\lambda_k$.
\end{proof}


Let us collect in the following result some differential identities satisfied by the modulus and phases of two eigenfunctions sharing the same modulus.

\begin{lemma}
    Let  $\phi_k,\phi_\ell:M\to \C$
be two eigenfunctions of the Schr\"odinger operator $A$ sharing the same modulus with corresponding eigenvalues $\lambda_k,\lambda_\ell$. 
Let $\rho$, $\theta_k$, $\theta_\ell$ be as in \eqref{k-ell-connected}. 
The following elliptic equations are fulfilled
\begin{align}\label{Lap-rho}
- \Delta \rho + |\nabla \theta_j|^2  \rho + V \rho &= \lambda_j \rho\qquad \mbox{on } M_\rho,\qquad 
j=k,l,
\\
\label{Lap-theta}
- \rho \Delta \theta_j - 2 \nabla \rho \cdot \nabla \theta_j &= 0\qquad \mbox{on } M_\rho,\qquad
j=k,l.
\end{align}
Moreover, 
\begin{equation}\label{eq:thetas}
|\nabla \theta_k|^2 - |\nabla \theta_\ell|^2 = \lambda_k - \lambda_\ell\qquad \mbox{on } M_\rho.
\end{equation}
\end{lemma}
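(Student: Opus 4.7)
The plan is to substitute the polar decomposition $\phi_j = \rho e^{i\theta_j}$ (for $j=k,\ell$) into the eigenvalue equation $-\Delta_g \phi_j + V\phi_j = \lambda_j \phi_j$ and separate real and imaginary parts. A preliminary remark is that, although $\theta_j$ is a priori only defined modulo $2\pi$, on each connected component of $M_\rho$ one can choose a continuous lift $\theta_j\colon M_\rho\to\R$, and the quantities $|\nabla_g\theta_j|^2$ and $\Delta_g\theta_j$ appearing in the claim do not depend on the choice of lift. The regularity recorded just before the statement ($\rho$ and $\theta_j$ being $C^{1,\alpha}_{\rm loc}$ on $M_\rho$) is what allows us to interpret all derivatives classically, at least in a distributional sense on $M_\rho$, and ellipticity promotes them to genuine identities.

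The computation itself is a direct application of Leibniz's rule. Writing $\phi_j = \rho\, e^{i\theta_j}$, we get
\begin{equation*}
\nabla_g\phi_j = e^{i\theta_j}\bigl(\nabla_g\rho + i\rho\,\nabla_g\theta_j\bigr),
\end{equation*}
and hence
\begin{equation*}
\Delta_g\phi_j = e^{i\theta_j}\Bigl(\Delta_g\rho - \rho\,|\nabla_g\theta_j|^2 + i\bigl(2\,g(\nabla_g\rho,\nabla_g\theta_j) + \rho\,\Delta_g\theta_j\bigr)\Bigr).
\end{equation*}
Plugging this into $-\Delta_g\phi_j + V\phi_j = \lambda_j\phi_j$ and canceling the common factor $e^{i\theta_j}$ (which is lawful pointwise on $M_\rho$) yields
\begin{equation*}
-\Delta_g\rho + |\nabla_g\theta_j|^2\rho + V\rho - i\bigl(2\,g(\nabla_g\rho,\nabla_g\theta_j) + \rho\,\Delta_g\theta_j\bigr) = \lambda_j\rho.
\end{equation*}
Since $\rho$, $V$, $\lambda_j$ and the lifts $\theta_j$ are all real-valued, separating real and imaginary parts gives exactly \eqref{Lap-rho} and \eqref{Lap-theta}.

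Finally, to obtain \eqref{eq:thetas}, I would subtract the identity \eqref{Lap-rho} for $j=\ell$ from the one for $j=k$, obtaining $\bigl(|\nabla_g\theta_k|^2 - |\nabla_g\theta_\ell|^2\bigr)\rho = (\lambda_k - \lambda_\ell)\rho$ on $M_\rho$, and then divide by $\rho$, which is strictly positive on $M_\rho$ by definition. The only genuine subtlety is the one already addressed at the start — making the decomposition $\phi_j = \rho e^{i\theta_j}$ rigorous as a smooth polar form on $M_\rho$ (which may be disconnected), together with justifying that the scalar-valued quantities in the final formulas are globally defined on $M_\rho$; once this is in place, the rest is the elementary Leibniz calculation above, so there is no serious analytic obstacle.
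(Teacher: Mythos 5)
Your proposal is correct and follows essentially the same route as the paper's own proof: substitute the polar form $\phi_j=\rho e^{i\theta_j}$ into the eigenvalue equation, cancel $e^{i\theta_j}$, separate real and imaginary parts to get \eqref{Lap-rho} and \eqref{Lap-theta}, and subtract the two instances of \eqref{Lap-rho} and divide by $\rho\neq 0$ to get \eqref{eq:thetas}. Your added remark on choosing local lifts of $\theta_j$ and the lift-independence of $|\nabla_g\theta_j|^2$ and $\Delta_g\theta_j$ is a reasonable extra precaution that the paper leaves implicit.
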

\begin{proof}
    First, we have
    \begin{equation}\label{eq-firstwehave}
\Delta \phi_k= \Delta \rho e^{i \theta_k} + 2 i \nabla \rho \cdot \nabla \theta_k e^{i \theta_k} + \rho (i \Delta \theta_k e^{i \theta_k} - |\nabla \theta_k|^2 e^{i \theta_k})\qquad \mbox{on }M_\rho.
\end{equation}
Plugging \eqref{eq-firstwehave} in the equality
$-\Delta \phi_k + V \phi_k = \lambda_k \phi_k$,
simplifying by $e^{i \theta_k}$, and taking the real and imaginary parts of the obtained equality, we get \eqref{Lap-rho} and \eqref{Lap-theta}. 
Equation \eqref{eq:thetas} is obtained by taking the difference of the two equations in \eqref{Lap-rho} corresponding to $j=k,l$ and by simplifying by $\rho \neq 0$ on $M_{\rho}$.
\end{proof}

\subsection{The one-dimensional case}
\label{ss-1}

A noticeable feature of  the one-dimensional case is that, 
locally around each point of $M$, 
the Riemannian manifold is isometric to an  interval in $\R$ endowed with the Euclidean metric, as it follows by taking an arclength coordinate (see, for instance, \cite[Appendix Classifying $1$-dimensional manifolds]{Mil65}). Hence, since $M$ is complete, four situations may occur: 
\begin{itemize}
    \item 
$M$ is isometric to the line $\R$, 
\item $M$ is isometric to the half-line $[0,+\infty)$, 
\item $M$ is isometric to a compact  interval $[0,L]$ for some $L>0$, 
\item $M$ is a closed curve isometric to the quotient $\R/L\Z$ for some $L>0$.
\end{itemize}

Moreover, 
in the arclength-coordinate the metric is constant and
the Laplace--Beltrami operator coincides with the second derivative.

The main result of this section is Theorem~\ref{thm:1D} below, stating, in particular, that a one-dimensional manifold $M$ admits a 
pair of
eigenfunctions sharing the same modulus and
corresponding to different eigenvalues
only if $M=\mathbb{T}$  and $V$ is constant, that is, in the one-dimensional occurrence of the system studied in \cite{DN21}. 

\begin{theorem}\label{thm:1D}
If $M$ is one-dimensional and the Schr\"odinger operator 
$H_0$ admits two 
$\C$-linearly independent
eigenfunctions $\phi_k$ and $\phi_\ell$
sharing the same modulus, 
then necessarily $M$ is 
a closed curve and $\phi_k$, $\phi_\ell$ are nowhere vanishing on $M$. If, moreover, the two eigenfunctions correspond to distinct eigenvalues, then $V$ is constant. 
\end{theorem}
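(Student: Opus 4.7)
My plan is to address the three conclusions in order, exploiting that in an arclength coordinate $H_0$ becomes $-d^2/dx^2+V$ and that, by the classification of connected complete one-dimensional Riemannian manifolds, $M$ is isometric to one of $\R$, $[0,+\infty)$, $[0,L]$, or $\R/L\Z$.

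For Step~1 (that $M$ is a closed curve), the key point is that every eigenvalue of a one-dimensional Schr\"odinger operator on any of the three other models is simple: on half-bounded or bounded intervals with Dirichlet/Neumann conditions, an endpoint boundary condition cuts the $2$-dimensional ODE solution space down to a line; on $\R$ and $[0,+\infty)$, Assumption~\ref{ass:op} forces the limit-point case at each end, so $L^2$-decay is again a $1$-dimensional condition. Two $\C$-linearly independent eigenfunctions must then come from distinct simple eigenvalues, which by \Cref{lem:simple} precludes their sharing the same modulus. Hence $M\cong\R/L\Z$.

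For Step~2 (nowhere vanishing), I argue by contradiction. If $\rho(x_0)=0$, then $\RE(\phi_k)$ and $\IM(\phi_k)$ are real $\lambda_k$-eigenfunctions both vanishing at $x_0$, and inside any eigenspace of the $1$-D operator $H_0$, ``vanishing at a prescribed point'' is a codimension-one condition on the $2$-dimensional ODE solution space, so the subspace of such eigenfunctions has dimension at most one. Thus $\phi_k=(a_k+ib_k)\psi_k$ for a real eigenfunction $\psi_k$, and likewise $\phi_\ell=(a_\ell+ib_\ell)\psi_\ell$, and the modulus identity reduces to $|\psi_k|=\delta|\psi_\ell|$ for some $\delta>0$. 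If $\lambda_k=\lambda_\ell$, both $\psi_k,\psi_\ell$ lie in that same $\leq 1$-dimensional subspace and are hence real-proportional, so $\phi_k,\phi_\ell$ are $\C$-proportional, contradicting independence. If $\lambda_k\neq\lambda_\ell$, then $\psi_k^2\equiv\delta^2\psi_\ell^2$ forces the two real eigenfunctions to share their (isolated) zero sets and to satisfy $\psi_k=\pm\delta\psi_\ell$ between consecutive zeros; a first-order Taylor expansion at each common zero (where both derivatives are nonzero by ODE uniqueness) shows that the sign cannot jump, yielding a global proportionality that contradicts the $L^2$-orthogonality of eigenfunctions of distinct eigenvalues.

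For Step~3 (that $V$ is constant when $\lambda_k\neq\lambda_\ell$), since $\rho>0$ on the whole closed curve, the identities of the preceding lemma hold globally. Integrating $(\rho^2\theta_j')'=0$ yields constants $c_j\in\R$ with $\theta_j'=c_j/\rho^2$, and substituting into $(\theta_k')^2-(\theta_\ell')^2=\lambda_k-\lambda_\ell$ gives $(c_k^2-c_\ell^2)/\rho^4=\lambda_k-\lambda_\ell\neq 0$; the case $c_k^2=c_\ell^2$ is immediately ruled out, and the remaining identity forces $\rho$ to be a positive constant. Then $(\theta_k')^2$ is constant and $\rho''=0$, so feeding these back into $-\rho''+(\theta_k')^2\rho+V\rho=\lambda_k\rho$ gives $V\equiv\lambda_k-(\theta_k')^2$, a constant. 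I expect Step~2 to be the main obstacle, since one must upgrade a pointwise modulus identity at the possibly intricate zero set of $\rho$ into a genuine algebraic proportionality between the underlying real eigenfunctions; the crux is the codimension-one observation that collapses each $\phi_j$ onto a complex scalar times a real eigenfunction, reducing the question to the rigidity of real Sturm--Liouville eigenfunctions.
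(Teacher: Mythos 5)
Your proof is correct, but it reaches the first two conclusions by a genuinely different route than the paper. For the topological restriction, the paper invokes simplicity (via \Cref{lem:simple}) only to rule out Neumann endpoints, and excludes the Dirichlet, $\R$, and $[0,+\infty)$ cases later as by-products of the modulus analysis: when $\lambda_k\ne\lambda_\ell$ by showing $\rho$ is constant via \Cref{lem:inab}, and when $\lambda_k=\lambda_\ell$ by a convexity argument ($\rho''>1$ wherever $\rho$ is small, contradicting the decay of $\rho$ at infinity, after ruling out Dirichlet endpoints through the nowhere-vanishing claim). You instead observe that on every non-circle model \emph{all} eigenvalues are simple --- an endpoint boundary condition, or the limit-point property at an infinite end (guaranteed here by $V$ being bounded below), cuts the two-dimensional solution space to a line --- which disposes of all these cases at once. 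For the nowhere-vanishing claim the difference is sharper: the paper uses the first integral $\rho^2\theta_k'=C_k$ together with the boundedness of $\phi_k'e^{-i\theta_k}=\rho'+i\theta_k'\rho$ near a zero of $\rho$ to force $C_k=C_\ell=0$ and then concludes by unique continuation, whereas your codimension-one observation (a common zero collapses each $\phi_j$ onto a complex scalar times a real eigenfunction) followed by the Taylor sign-matching across the isolated common zeros, or by direct proportionality in the equal-eigenvalue case, is a more elementary real-ODE alternative that bypasses the phase formalism entirely. Your Step~3 coincides with the paper's use of \Cref{lem:inab}. The trade-off is that your route imports Weyl limit-point theory for the line, which the paper avoids, while the paper's route keeps everything inside the modulus--phase identities that it reuses elsewhere (e.g., for quantum graphs).
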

The proof of the previous result is based on the following lemma.
\begin{lemma}\label{lem:inab}
Let $(a,b)$ be an interval in $\R$ and $\lambda_k,\lambda_\ell\in \R$.
    Assume that 
    $\rho\in C^1(a,b;(0,+\infty))$ and $\theta_k,\theta_\ell\in C^{1,1}(a,b;\mathbb{T})$
are 
such that
    $$ \rho \neq 0, \quad- \rho \theta_k'' - 2 \rho' \theta_k' = 0, \quad 
  - \rho \theta_\ell'' - 2 \rho' \theta_\ell' = 0,\quad
    | \theta_k'|^2 - |\theta_\ell'|^2 = \lambda_k - \lambda_\ell\qquad
    \ \text{in}\ (a,b).
    $$
    Then there exist $C_k,C_\ell\in \R$ such that $\theta_k'=C_k\rho^{-2}$ and $\theta_\ell'=C_\ell\rho^{-2}$ in $(a,b)$. 
   Moreover,   if $\lambda_k= \lambda_\ell$, then there exists $c\in \T$ such that 
     either
     $\theta_k -\theta_\ell=c$  or $\theta_k+\theta_\ell=c$ 
     in $(a,b)$.
If, instead, $\lambda_k\ne \lambda_\ell$ then 
$C_k^2\ne C_\ell^2$ and
there exists $\rho_*\in (0,+\infty)$ 
such that
 \begin{equation}\label{eq:affine}
 \rho(x) = \rho_*,\quad 
 \qquad   x\in (a,b).
 \end{equation}
\end{lemma}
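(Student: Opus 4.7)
The plan is to exploit the fact that the ODE satisfied by each phase can be rewritten as an exact conservation law. Multiplying the equation $-\rho\theta_j''-2\rho'\theta_j'=0$ by $\rho$ gives $-(\rho^2\theta_j')'=0$, so that $\rho^2\theta_j'$ is a constant, call it $C_j$, for $j=k,\ell$. Since $\rho$ is everywhere positive on $(a,b)$, this immediately yields $\theta_k'=C_k\rho^{-2}$ and $\theta_\ell'=C_\ell\rho^{-2}$, which is the first assertion. (The $\T$-valued character of $\theta_k,\theta_\ell$ causes no trouble because they are $C^{1,1}$ and thus can be locally lifted to $\R$-valued primitives, so the derivatives are genuine real-valued functions.)

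Next, I would plug these formulas into the algebraic identity $|\theta_k'|^2-|\theta_\ell'|^2=\lambda_k-\lambda_\ell$ to obtain
\begin{equation*}
(C_k^2-C_\ell^2)\rho(x)^{-4}=\lambda_k-\lambda_\ell,\qquad x\in(a,b).
\end{equation*}
From this single identity the two remaining cases follow by inspection.

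If $\lambda_k=\lambda_\ell$, then $C_k^2=C_\ell^2$, so $C_k=\varepsilon C_\ell$ with $\varepsilon\in\{-1,+1\}$, and therefore $\theta_k'=\varepsilon\,\theta_\ell'$ identically on $(a,b)$. Integrating, $\theta_k-\varepsilon\theta_\ell$ is constant as a $\T$-valued function on the interval, which gives the stated dichotomy between $\theta_k-\theta_\ell=c$ and $\theta_k+\theta_\ell=c$.

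If instead $\lambda_k\neq\lambda_\ell$, the right-hand side of the displayed identity is a nonzero constant, which forces $C_k^2\neq C_\ell^2$ and also $\rho^{-4}=(\lambda_k-\lambda_\ell)/(C_k^2-C_\ell^2)$ to be a positive constant; hence $\rho\equiv\rho_*$ for some $\rho_*>0$, as claimed. There is no real obstacle here: the only point meriting care is the first integral step, since the rest is purely algebraic once $\theta_j'=C_j\rho^{-2}$ is available.
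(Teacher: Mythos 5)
Your proposal is correct and follows essentially the same route as the paper: the paper solves the linear ODE $\gamma_j'=-2(\rho'/\rho)\gamma_j$ for $\gamma_j=\theta_j'$, which is the same first integral $\rho^2\theta_j'=C_j$ that you obtain by recognizing the exact derivative, and the remaining case analysis via $(C_k^2-C_\ell^2)\rho^{-4}=\lambda_k-\lambda_\ell$ is identical.
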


\begin{proof}
Setting $\gamma_k = \theta_k'$, we have that 
    \begin{equation*}
\gamma_k' = - 2 (\rho'/ \rho) \gamma_k\qquad 
\text{almost everywhere in}\ (a,b).
\end{equation*}
Integrating, we deduce that there exists $C_k\in \R$ such that
\begin{equation}\label{eq:Crho-2}
    \theta_k'(x)= \gamma_k(x) = C_k \rho(x)^{-2}, \qquad x\in (a,b).
     \end{equation}
Analogously, there exists $C_\ell\in\R$ such that $\theta_\ell'(x) = C_\ell \rho(x)^{-2}$, $x\in (a,b)$. Hence, using the equality 
$| \theta_k'|^2 - |\theta_\ell'|^2 = \lambda_k - \lambda_\ell$, we deduce that 
\begin{equation}\label{eq:exf}
    (C_k^2-C_\ell^2)\rho(x)^{-4}=\lambda_k - \lambda_\ell,\qquad x\in (a,b).
\end{equation}    
If $\lambda_k=\lambda_\ell$ then necessarily $C_k^2=C_\ell^2$, meaning that either $\theta_k'=\theta_\ell'$ or $\theta_k'=-\theta_\ell'$ in $(a,b)$. The latter means that either  $\theta_k-\theta_\ell$ or $\theta_k+\theta_\ell$ is constant. 

Let now consider the case $\lambda_k\ne\lambda_\ell$.
We deduce from \eqref{eq:exf} that 
$C_k^2\ne C_\ell^2$ and that
 $\rho$ is constant on $(a,b)$, that is, there exists $\rho_*>0$ such that
$ \rho(x) =\rho_*$, $x\in (a,b)$. 
\end{proof}

\begin{proof}[Proof of Theorem~\ref{thm:1D}]
Let $\phi_k$ and $\phi_\ell$ be two 
eigenfunctions
of $H_0$ sharing the same modulus and corresponding to the eigenvalues $\lambda_k$ and $\lambda_\ell$. 
In particular, $\phi_k$ and $\phi_\ell$ are assumed to be $\C$-linearly independent. 
Let $\rho,\theta_k,\theta_\ell$ be as in \eqref{k-ell-connected}. 
Up to taking an arclength coordinate, 
equations~\eqref{Lap-theta} and \eqref{eq:thetas}
read $-\rho \theta_k''-2 \theta_k'\rho=0=-\rho \theta_\ell''-2 \theta_\ell'\rho$ 
and $|\theta_k'|^2-|\theta_\ell'|^2=\lambda_k-\lambda_\ell$
on $M_\rho$. These are the equations for $\theta_k,\theta_\ell,\rho$ appearing in the statement of Lemma~\ref{lem:inab}.

Let us first 
exclude the case in which 
$H_0$ has boundary conditions of Neumann type  at a point of $\partial M$. 
Without loss of generality 
such point is $0$ and $M=[0,L]$ for some $L>0$ or $M=[0,+\infty)$. 
 Since for every $\lambda
 \in \R$ 
the space of real-valued solutions $\varphi$ of 
$-\varphi''(x)=(\lambda-V(x)) \varphi(x)$ with
$
\varphi'(0)=0$  is one-dimensional, then  $\lambda_k$ and $\lambda_\ell$ are both simple and the conclusion follows from Lemma~\ref{lem:simple}.

Let us consider now the case $\lambda_k\ne \lambda_\ell$.
We obtain from Lemma~\ref{lem:inab} that $\rho$ is locally constant on $M_\rho$. Since, moreover, $M$ is connected and $\rho$ is continuous,  then  $\rho$ coincides with some constant $\rho_*>0$ on $M$. 
This excludes the case in which 
$H_0$ has boundary conditions of Dirichlet type at a point of $\partial M$ and the case in which 
$M$ is isometric to $\R$ or to $[0,+\infty)$, since otherwise $\phi_k$ and $\phi_\ell$ would not be in $L^2$. In particular $M$ is a closed curve. 
We also obtain 
 from \eqref{Lap-rho} and \eqref{eq:affine}
 that $V$ is constant (globally on $M=M_\rho$).

Let us now turn to the case where $\lambda_k=\lambda_\ell$. 
We claim that $\rho$ does not vanish on $M$. 
Assume by contradiction that this is not the case and 
identify isometrically a connected component of $M_\rho$ with a possibly unbounded interval
$(a,b)$ or $(a,b]$, with  $a\in M\setminus M_\rho$.
 Since $\phi_k'e^{-i \theta_k}=\rho'+i\theta_k'\rho$ on $(a,b)$ and both $\rho'$ and $\theta_k'\rho$ are real-valued,  we deduce that 
$\rho'$ and $\theta_k'\rho$ are bounded in a right-neighborhood of $a$. 
By Lemma~\ref{lem:inab}, there exists $C_k\in \R$ such that $\theta_k'\rho=C_k\rho^{-1}$ on $(a,b)$. Since $\rho(x)\to 0$ as $x\downarrow a$,
it follows from the boundedness of $\theta_k'\rho$ that $C_k=0$. Similarly, $C_\ell=0$, 
yielding that $\theta_k$ and $\theta_\ell$ are constant on a connected component of $M_\rho$. 
Hence $\phi_k$ and $\phi_\ell$ are $\C$-linearly dependent on a nonempty open set of $M$, yielding by the unique continuation principle that they are $\C$-linearly dependent on $M$. This concludes the contradiction argument proving that $\rho$ does not vanish on $M$ (excluding, in particular, the case in which 
$H_0$ has boundary conditions of Dirichlet type at a point of $\partial M$).

We are then left to exclude the case where $M=M_\rho$ is isometric to $\R$.
It follows from Lemma~\ref{lem:inab} that 
either $\theta_k-\theta_\ell$ or $\theta_k+\theta_\ell$ is constant  on $M$. By $\C$-linear independence of $\phi_k$ and $\phi_\ell$ we can exclude the case where 
$\theta_k-\theta_\ell$ is constant and we can assume that $\theta_k'=C_k \rho^{-2}$ on $M$ with $C_k\ne 0$.

Since $\phi_k$ and $\phi_\ell$ are in $H^1(M)$, then 
\begin{equation}\label{eq:liminrho}
    \lim_{|x|\to \infty}\rho(x)= 0.
\end{equation} 

It follows from 
\eqref{Lap-rho} that 
$\rho$ satisfies
\[-\rho''+C_k^2\rho^{-3}+V\rho=\lambda_k \rho\qquad\mbox{on $M=\R$}.\]

Since 
$V$ is bounded from below, we deduce that 
there exists $\rho_*>0$ such that
$\rho''(x)>1$ at every $x$ such that $\rho(x)<\rho_*$. 
This is in contradiction with \eqref{eq:liminrho} and the proof is concluded. 
\end{proof}

\begin{remark}\label{rem:exp}
The theorem allows to conclude that $V$ is constant only in the case where $\lambda_k\ne \lambda_\ell$. Let us see through an example that this is not true in general if $\lambda_k= \lambda_\ell$.
We proceed in a constructive way, 
starting from any non-constant $\rho\in C^{1,1}(\T;(0,+\infty))$ and 
selecting $V$ and $\theta$, both non-constant, such that 
$\rho e^{i\theta}$ is an eigenfunction of $-\Delta+V$ corresponding to the eigenvalue $0$. 
In particular, $\rho e^{i \theta}$ and $\rho e^{-i \theta}$
are $\C$-linearly independent eigenfunctions of $-\Delta+V$ on $\T$.

Consider a positive integer $j$ to be fixed later and set 
\begin{align*}
C&=\frac{2\pi}{\int_{0}^{2\pi}\rho(x)^{-2}\,dx},\\
    \theta(x)&=C j\int_0^x\rho(y)^{-2}\,dy,\qquad     x\in [0,2\pi].
\end{align*}
It holds by construction that $\theta(0)=0$ and 
$\theta(2\pi)=2j\pi$. Hence $\theta$, seen as a function from $\T$ to itself,  is well defined and non-constant.
Notice that $\theta$ has been defined in such a way that $\theta'=Cj\rho^{-2}$. 
Setting then 
\[V=\frac{\rho''}{\rho}-(\theta')^2=\frac{\rho''}{\rho}-\frac{C^2j^2}{\rho^4}\qquad \mbox{on }\T,\]
we have that $\rho$ solves 
\[-\rho''+(\theta')^2 \rho+V\rho=0,\qquad \mbox{on }\T.\]
Moreover, $j$ can be chosen so that $V$ is non-constant. 

One can now check that $\phi=\rho e^{i \theta}$ solves 
$-\phi''+V \phi=0$. As a consequence, $\bar \phi=\rho e^{-i \theta}$ solves the same equation. This completes our construction. 

For an explicit example, one can take  $j=1$ and 
\[\rho(x)=\cos(x)+2,\qquad x\in \T=\R/2\pi\Z.\]
The resulting $V$ and the linearly independent eigenfunctions $\rho\cos(\theta)$ and $\rho\sin(\theta)$   of $-\Delta+V$ are illustrated in Figure~\ref{fig-exp}.
\begin{figure}[h]
    \centering
    \includegraphics[width=5cm]{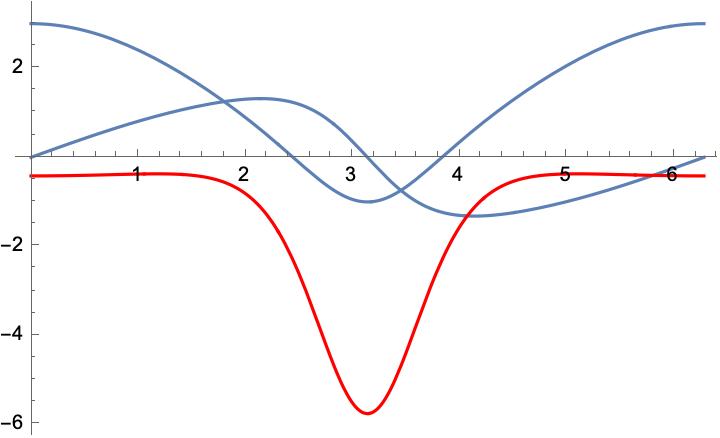}
    \caption{Plot of $V$ (in red), $\rho\cos(\theta)$ and $\rho\sin(\theta)$ (in blue) from Remark~\ref{rem:exp}.\label{fig-exp}}
\end{figure}

As a consequence of Theorem~\ref{th:DN21} in the case $d=1$, even in the case $V\ne 0$ 
it might be possible to
steer system~\eqref{eq:ScBilinear} in arbitrarily small time from one eigenfunction arbitrarily close to a linearly independent one (sharing the same eigenvalue). 
Notice that the Schr\"odinger equation on $\T$ with non-constant potential $V$ 
has a relevant role in solid state physics (see, e.g, \cite[Section~XIII.16]{ReedSimon}). 
\end{remark}

\section{Quantum graphs}
\label{s-grafi}
\subsection{Setting}

Let $\mathcal G$ be a compact connected metric
graph with $N$ edges $e_1,\dots,e_N$ of respective lengths $L_1,\dots,L_N$ and 
$M$ vertices $v_1,\dots,v_M$. For every vertex $v$, we denote 
\begin{align}\label{molteplicit}N(v):=\big\{l \in\{1,\dots,N\}\ |\ v\in 
e_l\big\},\qquad n(v):=|N(v)|.
\end{align}

We denote by $\mathcal{V}_{i}$ the set of internal vertices of $\mathcal G$, that is, the set of those vertices $v$ for which either $n(v)>1$ or $n(v)=1$ and the single edge to which $v$ belongs is actually a loop. The remaining vertices are called external and we denote by $\mathcal{V}_{e}$ the set comprising them.

Each edge $e_j$ is parameterized by an arclength coordinate going from $0$ to its 
length $L_j$.
On $\mathcal G$ 
we 
consider functions $f:=(f^1,\dots,f^N)$,
where $f^j:e_j\rightarrow \C$ for every $j=1, \dots, N$. We denote
$$L^2(\mathcal G)=\bigoplus_{j=1}^N L^2(e_j).$$
The Hilbert space $L^2(\mathcal G)$ is  equipped 
with the norm $\|\cdot\|_{L^2}$ defined by
\begin{equation*}
    \norme{f}_{L^2(\mathcal G)}^2 = \sum_{j=1}^N \norme{f^j}_{L^2(e_j)}^2.
\end{equation*}
We then denote by  $H^1(\mathcal G)$ the Sobolev space of all  functions $f=(f^1,\dots,f^N)$ on $\mathcal G$ that are continuous (that is, such that $f^{j}(v)=f^l(v)$ if $j,l\in N(v)$) and such that $f^j$ 
belong to $H^1(e_j)$ for every $j=1,\dots,N$, equipped with the norm defined by
\begin{equation*}
    \norme{f}_{H^1(\mathcal G)}^2 = \sum_{j=1}^N \norme{f^j}_{H^1(e_j)}^2.
\end{equation*}
Finally, the Sobolev space $\tilde{H}^2(\mathcal G)$ is defined as
$$\tilde{H}^2(\mathcal G)=\bigoplus_{j=1}^N H^2(e_j),$$
equipped with the natural norm.

Let $f\in \tilde{H}^2(\mathcal G) \cap H^1(\mathcal G)$ and $v$ be a vertex of $\mathcal G$ connected once to an edge $e_j$ with 
$j=1,\dots,N$. When the coordinate parameter $e_j$ at the vertex $v$ is equal to $0$ (respectively, $L_j$), we denote
\begin{equation}
\label{eq:derivativeatavertex}
\partial_x f^j(v)=\partial_xf^j(0),\qquad \big(\text{respectively,}\ \partial_x 
f^j(v)=-\partial_xf^j(L_j)\big).
\end{equation}
When $e_j$ is a loop connected to $v$ at both its extremities, we use the notation 
\begin{equation}
\label{eq:derivativeatavertexloop}
\partial_x 
f^j(v)=\partial_xf^j(0)-\partial_xf^j(L_j).
\end{equation}

In the following, we consider two type of boundary conditions. For $f \in \tilde{H}^2(\mathcal G) \cap H^1(\mathcal G)$, we say that $f$ satisfies 
\begin{itemize}
\item \emph{a Dirichlet boundary condition} at a vertex $v\in \mathcal{V}_{e}$ if $f(v) = 0$,
\item \emph{a Neumann--Kirchoff boundary condition} at a vertex $v \in \mathcal{V}_{e}\cup \mathcal{V}_i
$ if $\sum_{j\in N(v)}\partial_x f^j(v)=0$.
\end{itemize}
We partition $\mathcal{V}_e$ in a two subsets of vertices $\mathcal{V}_D$ and $\mathcal{V}_N$ at which we impose, respectively, 
a Dirichlet and a Neumann--Kirchoff 
boundary condition.

Let $V \in L^{\infty}(\mathcal G; \R)$, $H_0 = - \partial_{x}^2 + V$ be the Hamiltonian, and let us define
\begin{align*}
    \mathrm{Dom}(H_0) = \{f \in \tilde{H}^2&(\mathcal G) \cap H^1(\mathcal G)\mid \\
    &
    f(v) = 0\ \forall v \in \mathcal{V}_D \mbox{ and }
    \sum_{j\in N(v)}\partial_x f^j(v)=0\ \forall v \in \mathcal{V}_N \cup \mathcal{V}_i\}.
\end{align*}

\begin{definition}
A quantum graph is a compact connected metric graph $\mathcal G$, equipped with the unbounded operator $(H_0, \mathrm{Dom}(H_0))$. 
\end{definition}

Given a quantum graph,  the Hamiltonian $H_0$ is a self-adjoint operator that has compact resolvent, see \cite[Theorems~1.4.4 and 3.1.1]{BK13}. So, there exists an orthonormal basis of eigenfunctions $(\phi_k)_{k \geq 1}$ associated with the sequence of real eigenvalues $(\lambda_k)_{k \geq 1}$.

\subsection{General results 
for quantum graphs}

The definition of 
eigenfunctions sharing the same modulus
is the same as in \Cref{def:pointphase}.

\begin{lemma}\label{lem:simplegraph}
    If $\lambda_k$ and $\lambda_\ell$ are simple and distinct eigenvalues of the Hamiltonian $H_0= - \partial_{x}^2 + V$ with corresponding eigenfunctions $\phi_k$ and $\phi_\ell$, 
    then 
$\phi_k$ and $\phi_\ell$ cannot 
share the same modulus.
\end{lemma}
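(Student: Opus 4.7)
The plan is to adapt the argument of \Cref{lem:simple} to the graph setting essentially verbatim; the only point requiring a genuine check is that the domain $\mathrm{Dom}(H_0)$ is closed under complex conjugation, so that the real and imaginary parts of an eigenfunction again belong to $\mathrm{Dom}(H_0)$ and are therefore eigenfunctions themselves.

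First, I would observe that the continuity condition at internal vertices, together with the Dirichlet and Neumann--Kirchhoff conditions at external vertices, are all real-linear constraints on the values of $f$ and its edgewise derivatives at the vertices, as defined in \eqref{eq:derivativeatavertex}--\eqref{eq:derivativeatavertexloop}. Combined with the fact that $\tilde H^2(\mathcal G)$ and $H^1(\mathcal G)$ are stable under $f \mapsto \bar f$, this implies $\overline{\phi_k} \in \mathrm{Dom}(H_0)$, and hence $\RE(\phi_k), \IM(\phi_k) \in \mathrm{Dom}(H_0)$. Since $V$ is real-valued, both $\RE(\phi_k)$ and $\IM(\phi_k)$ solve $H_0 u = \lambda_k u$, and the simplicity of $\lambda_k$ forces them to be $\R$-linearly dependent. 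Up to multiplying $\phi_k$ by a unit complex constant, I may therefore assume that $\phi_k$ is real-valued, and likewise for $\phi_\ell$.

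Next, setting $\rho = |\phi_k| = |\phi_\ell|$ and $M_\rho = \{x \in \mathcal G \mid \rho(x) > 0\}$, I note that $\rho$ is continuous and nontrivial on $\mathcal G$, so $M_\rho$ is a nonempty open subset of $\mathcal G$ and in particular contains an open subinterval $J$ of some edge $e_j$. On $J$, the real-valuedness of $\phi_k$ together with $|\phi_k| = \rho > 0$ forces $\phi_k = \varepsilon_k \rho$ for a locally constant sign $\varepsilon_k \in \{-1,+1\}$, and analogously $\phi_\ell = \varepsilon_\ell \rho$. Substituting into the edgewise eigenvalue equations $-\partial_x^2 \phi_j + V \phi_j = \lambda_j \phi_j$ for $j \in \{k,\ell\}$ yields $-\rho'' + V \rho = \lambda_k \rho = \lambda_\ell \rho$ on $J$, whence $\lambda_k = \lambda_\ell$, contradicting the hypothesis.

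The only step specific to the graph context, which I expect to be the (mild) main point to settle, is the real-linearity of the combined continuity/Dirichlet/Neumann--Kirchhoff conditions that makes the $\RE/\IM$ decomposition admissible at the level of $\mathrm{Dom}(H_0)$; once that is secured, the remaining argument is a one-dimensional, edgewise replica of the proof of \Cref{lem:simple}, and no appeal to unique continuation across vertices is needed since a single interval $J \subset M_\rho$ suffices to derive the contradiction.
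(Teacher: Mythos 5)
Your proposal is correct and follows essentially the same route as the paper, whose proof of \Cref{lem:simplegraph} simply declares it analogous to that of \Cref{lem:simple} (real and imaginary parts are eigenfunctions, simplicity forces colinearity, hence a real phase, hence $-\rho''+V\rho=\lambda_k\rho=\lambda_\ell\rho$ on a set where $\rho>0$). Your explicit verification that $\mathrm{Dom}(H_0)$ is stable under complex conjugation, and the localization to a single subinterval of an edge, are exactly the (mild) graph-specific points the paper leaves implicit.
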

\begin{proof}
    The proof is analogue to the one of \Cref{lem:simple}.
\end{proof}

\begin{cor}
\label{cor:genericpointphaseequivalentgraph}
Let $\mathcal G$ be a compact graph 
with the Hamiltonian $H_0 = - \partial_{x}^2$, equipped with Neumann--Kirchoff conditions. 
Assume that $\mathcal G$ has at least one vertex 
$v$ with $N(v)>1$.
Then, generically with respect to the lengths of the edges of the graph, no pair of distinct eigenfunctions
belonging to the same orthonormal basis 
of the Laplace--Beltrami operator 
share the same modulus.
\end{cor}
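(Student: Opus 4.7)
The plan is to argue as in the proof of \Cref{cor:gen-metric}, reducing the statement to a generic simplicity result for the spectrum of $H_0$ and then appealing to Kato-type analytic perturbation theory combined with the secular-determinant formulation of quantum graphs. First, I carry out the reduction: if $L=(L_1,\dots,L_N)\in (0,+\infty)^N$ is such that every eigenvalue of $H_0$ is simple, and $(\phi_k)_{k\ge 1}$ is an orthonormal basis of eigenfunctions, then any two distinct basis elements $\phi_k,\phi_\ell$ are orthogonal and hence $\C$-linearly independent; if they shared the same modulus, \Cref{lem:simplegraph} would force $\lambda_k=\lambda_\ell$, and simplicity of this common eigenvalue would then make them collinear, a contradiction. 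So the corollary reduces to showing that the set of length vectors $L\in (0,+\infty)^N$ for which $H_0=-\partial_x^2$ has only simple eigenvalues is residual.

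For this, I rely on the secular-determinant formulation recalled in \cite[Chapter 3]{BK13}: the positive eigenvalues of $H_0$ are the values $\lambda=k^2>0$ for which $\det\bigl(I-S(k)\mathrm{e}^{ikL}\bigr)=0$, where $S(k)$ is a unitary matrix depending only on the combinatorial structure of $\mathcal G$ and $\mathrm{e}^{ikL}$ is the diagonal matrix of edge phases. This determinant is jointly real-analytic in $(k,L)$, and analytic perturbation theory allows to label the eigenvalues locally by real-analytic branches $L\mapsto \lambda_j(L)$. The locus of $L$ at which two distinct such branches coincide is contained in a countable union of zero sets of real-analytic functions of $L$, so by the Baire theorem its complement is residual provided each such function is not identically zero.

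The main obstacle is this last non-triviality, and it is precisely where the hypothesis that some vertex $v$ satisfies $n(v)>1$ enters. If instead every vertex had $n(v)=1$, then the compact connected graph $\mathcal G$ would be either a single interval with two endpoint vertices, whose spectrum is automatically simple, or a single loop attached at one vertex, whose spectrum is structurally doubled and cannot be desymmetrised by varying the one available length parameter. Under the stated hypothesis, one picks an edge $e_j$ incident to such a branching vertex and uses a Hadamard-type perturbation formula expressing $\partial_{L_j}\lambda_j(L)$ in terms of the values of $\phi_j$ and $\partial_x \phi_j$ at $v$; a unique continuation argument on $\mathcal G$ then rules out that two analytic branches corresponding to linearly independent eigenfunctions could have identical $L_j$-derivatives for all $L$. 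Combined with real-analyticity this yields the required non-triviality of each defining function, and hence the desired residuality, concluding the proof.
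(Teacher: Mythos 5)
Your reduction is exactly the paper's: once every eigenvalue is simple, \Cref{lem:simplegraph} excludes two eigenfunctions of distinct eigenvalues sharing the same modulus, and simplicity makes two basis elements of a common eigenvalue collinear, so no pair of distinct members of an orthonormal basis can share the same modulus. At that point the paper concludes in one line by invoking Friedlander's theorem (\cite{Fri05}, or \cite[Theorem 3.1.7]{BK13}), which states precisely that, for a compact graph having a vertex incident to more than one edge, the set of length vectors for which all eigenvalues of $-\partial_x^2$ with Neumann--Kirchhoff conditions are simple is residual.

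The genuine gap in your proposal is in the attempt to re-prove this genericity statement rather than cite it. The secular-determinant and analytic-perturbation setup is reasonable, but the decisive non-triviality step is dispatched by ``a unique continuation argument on $\mathcal G$'', and unique continuation in this form fails on metric graphs: an eigenfunction may vanish identically on an entire edge without vanishing everywhere. The paper's own eight graph (Section~\ref{subsection-graph-8}) exhibits eigenfunctions $(s_k,0)$ and $(0,s_k)$ in the same eigenspace. For such eigenfunctions the Hadamard derivative $\partial_{L_j}\lambda$ of a branch, which is computed from the restriction of the eigenfunction to $e_j$, vanishes identically, so two colliding analytic branches can perfectly well have equal derivatives with respect to a given edge length, and nothing in your sketch excludes that this happens for every edge. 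Controlling exactly this degeneracy is the substance of Friedlander's proof and cannot be waved away. Either cite \cite{Fri05} (or \cite[Theorem 3.1.7]{BK13}) as the paper does, or supply the missing analysis; as written, the argument does not close.
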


\begin{proof}
We know from Friedlander's result (see \cite{Fri05} or \cite[Theorem 3.1.7]{BK13}) that, given a compact graph 
such that $N(v)>1$ for at least one vertex $v$ and 
with Hamiltonian $H_0 = - \partial_{x}^2$, the set
\[\{(L_1, \dots, L_N) \in [0,+\infty)^N \mid \text{all eigenvalues of } - \partial_{x}^2\ \text{have multiplicity one}\},\]
is residual in the topological space $[0,+\infty)^N$. 
The conclusion then follows from \Cref{lem:simplegraph}.
\end{proof}

\begin{remark}
    By using \cite{BL17}, one can generalize \Cref{cor:genericpointphaseequivalentgraph} to quantum graphs with so-called $\delta$-type condition, including in particular mixed Dirichlet and Neumann--Kirchoff conditions at the vertices.
\end{remark}

\begin{cor}
\label{cor:genericpointphaseequivalentgraphdirichletatone}
    Let $\mathcal G$ be a compact graph with Hamiltonian $H_0 = - \partial_{x}^2$ and Dirichlet boundary conditions    
    at at least one Dirichlet vertex of $\mathcal G$.  Then, there exists a subsequence of eigenfunctions among  
     which no pair share the same modulus.
\end{cor}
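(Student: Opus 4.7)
My plan is to exploit that a Dirichlet vertex $v$ is external of degree one, attached to a unique non-loop edge $e$, on which any eigenfunction is determined up to a scalar by its eigenvalue. I would parameterize $e=[0,L_e]$ with $v$ corresponding to $x=0$ and denote by $w$ the other endpoint. For each $\lambda\in\R$, I would introduce the canonical solution $u_\lambda\in C^1([0,L_e];\R)$ of the Cauchy problem $-u''+Vu=\lambda u$ with $u(0)=0$, $u'(0)=1$. Since every eigenfunction $\phi\in E_\lambda$ satisfies $\phi(v)=0$, it restricts on $e$ as $\phi|_e=c(\phi)\,u_\lambda$ for a unique $c(\phi)\in\C$.

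First I would establish the following rigidity: if $\phi_k\in E_{\lambda_k}$ and $\phi_\ell\in E_{\lambda_\ell}$ share the same modulus with $\lambda_k\ne\lambda_\ell$ and $c(\phi_k)\ne 0$, a contradiction arises. The equality $|\phi_k|=|\phi_\ell|$ forces $c(\phi_\ell)\ne 0$ as well, and gives
\[
|c(\phi_k)|\,|u_{\lambda_k}(x)|=|c(\phi_\ell)|\,|u_{\lambda_\ell}(x)|,\qquad x\in[0,L_e].
\]
Since $u_\lambda(x)=x+o(x)$ as $x\downarrow 0$, dividing by $x$ and letting $x\downarrow 0$ yields $|c(\phi_k)|=|c(\phi_\ell)|$, hence $u_{\lambda_k}^2\equiv u_{\lambda_\ell}^2$ on $[0,L_e]$. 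Both functions being positive on a common right-neighborhood $[0,\delta]$ of $0$, one concludes $u_{\lambda_k}=u_{\lambda_\ell}$ on $[0,\delta]$; subtracting the two eigenvalue equations then gives $(\lambda_k-\lambda_\ell)\,u_{\lambda_k}\equiv 0$ there, contradicting $u_{\lambda_k}'(0)=1$.

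Next I would exhibit infinitely many eigenvalues $\lambda$ that admit an eigenfunction with $c(\phi)\ne 0$. Let $\tilde{\mathcal G}$ denote the quantum graph obtained from $\mathcal G$ by deleting $e$, declaring $w$ a Dirichlet vertex and preserving every other vertex condition, and let $\tilde H_0$ be its Hamiltonian. Any $\phi\in E_\lambda$ with $\phi|_e=0$ restricts injectively to an element of $\mathrm{Dom}(\tilde H_0)$ solving $\tilde H_0\psi=\lambda\psi$: continuity at $w$ supplies the new Dirichlet condition, and the original Neumann--Kirchhoff relation at $w$ is preserved because $\partial_x\phi|_e(w)=0$. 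Denoting by $\mathsf N$ and $\tilde{\mathsf N}$ the eigenvalue counting functions of $H_0$ and $\tilde H_0$, this restriction argument yields
\[
\sum_{\lambda\le\Lambda}\dim\{\phi\in E_\lambda:\phi|_e=0\}\le \tilde{\mathsf N}(\Lambda).
\]
The Weyl law for compact quantum graphs (see, e.g., \cite[Theorem~3.1.10]{BK13}) provides $\mathsf N(\Lambda)\sim \pi^{-1}L(\mathcal G)\sqrt{\Lambda}$ and $\tilde{\mathsf N}(\Lambda)\sim\pi^{-1}(L(\mathcal G)-L_e)\sqrt{\Lambda}$, so $\mathsf N(\Lambda)-\tilde{\mathsf N}(\Lambda)\to+\infty$. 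Consequently infinitely many eigenvalues $\lambda$ satisfy $\dim E_\lambda>\dim\{\phi\in E_\lambda:\phi|_e=0\}$, and selecting for each of them a $\phi_\lambda$ with $c(\phi_\lambda)\ne 0$ yields, by the rigidity step, a subsequence of eigenfunctions no two of which share the same modulus.

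The hard part will be to make the comparison between $H_0$ and $\tilde H_0$ fully rigorous: one must verify that the restriction to $\tilde{\mathcal G}$ really lands inside the domain of a well-defined self-adjoint operator on a quantum graph of strictly smaller total length, paying attention to what happens at $w$ when $n(w)$ is small (in particular when $w$ has degree two in $\mathcal G$, so that its image in $\tilde{\mathcal G}$ becomes external). The degenerate subcase where $n(w)=1$ in $\mathcal G$, so that $\mathcal G$ coincides with the single edge $e$, is one-dimensional and is covered directly by Theorem~\ref{thm:1D} together with Lemma~\ref{lem:simplegraph}.
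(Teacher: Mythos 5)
Your argument is correct, but it takes a genuinely different route from the paper. The paper's proof is a two-line reduction: it invokes \cite[Theorem 2]{PT21}, which guarantees that a compact graph with at least one Dirichlet vertex has an increasing sequence of \emph{simple} eigenvalues, and then applies Lemma~\ref{lem:simplegraph}. You instead avoid any simplicity statement: your rigidity step shows that two eigenfunctions with distinct eigenvalues, both not vanishing identically on the Dirichlet edge $e$, cannot share the same modulus (a localized ODE version of the argument behind Lemma~\ref{lem:simple}, using that each eigenspace restricts to the one-dimensional span of $u_\lambda$ on $e$), and your counting step produces infinitely many such eigenvalues by comparing Weyl asymptotics of $H_0$ with those of the surgered operator on $\mathcal G\setminus e$ with Dirichlet decoupling at $w$. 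Both steps are sound: the functional $\phi\mapsto c(\phi)$ has kernel of codimension at most one in $E_\lambda$, the restriction map on $\{\phi\in E_\lambda\mid \phi|_e=0\}$ is injective into the eigenspace of the smaller graph, and the difference of counting functions grows like $\pi^{-1}L_e\sqrt{\Lambda}$; the boundary cases you flag ($n(w)=1$, or $w$ becoming external or retaining only a loop) are all easily handled since Dirichlet decoupling at a vertex of any degree is a self-adjoint local condition covered by the Weyl law. What your approach buys is self-containedness (no appeal to \cite{PT21}) and a slightly stronger conclusion, namely that \emph{any} pair of eigenfunctions with distinct eigenvalues and nonzero trace of the derivative at the Dirichlet vertex fails to share the same modulus, regardless of multiplicities; what it costs is the Weyl-law bookkeeping and the graph surgery, which the paper's citation makes unnecessary.
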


\begin{proof}
    We use \cite[Theorem 2]{PT21} to obtain that there exists an increasing sequence of eigenvalues $(\lambda_{k_l})_{l \geq 1}$ of multiplicity one.
    Therefore, by \Cref{lem:simplegraph}, 
    the corresponding eigenfunctions cannot share the same modulus.
\end{proof}

\begin{remark}
    By using \cite[Theorem 4]{PT21}, one can generalize \Cref{cor:genericpointphaseequivalentgraphdirichletatone} to Hamiltonians of the type $H_0 = - \partial_{x}^2 + V$, where $V$ is edgewise constant.
\end{remark}

\subsection{Bilinear control of the Schr\"odinger equation in quantum graphs}
\label{sec:bilinearcontrolquantum}

Let $Q=(Q_1, \dots, Q_m)\in L^{\infty}(\mathcal G;\R)^m$ be the $m$-tuple potentials of interaction and let us consider the bilinear controlled Schr\"odinger equation
\begin{equation}
\label{eq:ScBilinearGraph}
\left\{
\begin{array}{ll}
	 i \partial_t \psi = - \partial_{x}^2 \psi + V(x) \psi + \langle u(t), Q(x) \rangle_{\R^d} \psi&   (t,x)\in (0,T)\times \mathcal G,\\
 \psi(t,v)=0&  (t,v) \in (0,T)\times \mathcal{V}_D,\\ 
 \sum_{j\in N(v)}\partial_x \psi^j(t,v)=0&  (t,v) \in (0,T)\times (\mathcal{V}_N \cup \mathcal{V}_i),\\
 \psi(0,\cdot)=\psi_0 & \text{in}\ \mathcal G.
	\end{array}
\right.
\end{equation}
As for Riemannian manifolds, see \Cref{sec:bilinearquantumsystemsmanifolds}, the well-posedness of \eqref{eq:ScBilinear} is a consequence of \cite{BMS82}. The notions of mild solutions, reachable sets, controllability are then easily adapted. In particular, we keep the same notations. The interior of $\mathcal{R}(\psi_0)$ for the topology of $\mathrm{Dom}(H_0) \cap \mathcal S$ is empty. For positive results of exact or approximate controllability in subspaces of $L^2(\mathcal G)$, we refer to \cite{Duc20} and \cite{Duc21}.

Assume that $Q_1, \dots, Q_m \in C^0(\mathcal G;\R)$ and the restriction of $Q_1, \dots, Q_m$ to each edge of $\cal G$ is  $C^{\infty}$ up to the boundary. Let us define as before the sequence of positive cones
\begin{equation}
\label{eq:defH0subspacegraph}
     \mathcal{H}_0 =  \left\{\varphi \in \text{span}\{Q_1, \dots, Q_m\} \mid \varphi \mathrm{Dom}(H_0)  \subset \mathrm{Dom}(H_0) \right\} \subset L^2(\mathcal G),
\end{equation}
and by recurrence 
\begin{multline}
\label{eq:defHNgraph}
\mathcal H_{N+1} = \left\{ \varphi \in \mathcal{H}_N +\left\{- \alpha g(\nabla_g \psi,\nabla_g \psi) \mid \pm\psi \in \mathcal{H}_N,\ \alpha \geq 0\right\}\mid \varphi \mathrm{Dom}(H_0)  \subset \mathrm{Dom}(H_0) \right\}.
\end{multline}
Finally, we define 
\begin{equation}
\label{eq:defHinftygraph}
    \mathcal{H}_{\infty} = \bigcup_{N \geq 0} \mathcal{H}_N.
\end{equation}

We have the following result, that is an adaptation of \cite[Theorem A]{DN21} in the context of quantum graphs. 
\begin{theorem}
\label{th:smalltimeapproximatequantumgraphs}
    Assume that $\mathcal{H}_{\infty}$ is dense in $ L^2(\mathcal G; \R)$. Equation \eqref{eq:ScBilinearGraph} is small-time isomodulus 
    approximately controllable, i.e., 
    \begin{equation*}
        \{  e^{i \phi} \psi_0\mid \phi \in L^2(\mathcal G;\R)\} \subset \overline{\mathcal{R}_0({\psi_0})},\qquad \mbox{for every $\psi_0 \in \mathcal{S}$}.
    \end{equation*}
\end{theorem}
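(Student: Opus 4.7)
The plan is to deduce the result from the abstract small-time isomodulus approximate controllability theorem proved in Appendix~\ref{sec:appendixsmalltimepointphase}. That abstract result was precisely designed to apply beyond the Riemannian manifold setting of Theorem~\ref{th:smalltimepointphase}: its structural inputs are a self-adjoint Hamiltonian $H_0$ with compact resolvent on an ambient $L^2$ space, bounded real-valued multiplication operators $Q_1,\ldots,Q_m$, and the iterated saturation cone $\mathcal{H}_\infty$ constructed as in \eqref{eq:defH0subspacegraph}--\eqref{eq:defHinftygraph}. Once the quantum graph data is shown to fit this framework, the conclusion follows from the abstract statement.

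The first step is therefore to verify that $(\mathcal{G}, H_0, Q_1,\ldots,Q_m)$ satisfies the abstract hypotheses. Self-adjointness and the compact resolvent property of $H_0$ come from \cite{BK13}. The regularity assumption on the $Q_j$'s (continuous on $\mathcal{G}$, edgewise $C^\infty$) ensures that each $\mathcal{H}_N$ is a well-defined subspace of $L^\infty(\mathcal{G};\R)$ whose elements act on $\mathrm{Dom}(H_0)$. The nontrivial ingredient is the stabilization filter $\varphi \mathrm{Dom}(H_0) \subset \mathrm{Dom}(H_0)$: at a Neumann--Kirchhoff vertex $v$, for $\varphi$ continuous and $\psi \in \mathrm{Dom}(H_0)$, one computes
\[
\sum_{j \in N(v)} \partial_x(\varphi^j \psi^j)(v) = \psi(v) \sum_{j \in N(v)} \partial_x \varphi^j(v),
\]
so the filter forces $\varphi$ itself to satisfy the Neumann--Kirchhoff condition at $v$; at a Dirichlet vertex mere continuity of $\varphi$ suffices.

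The second step is the conjugation argument itself, which is structurally the same as in the appendix. The heuristic is that the control $u(t) = \lambda \varphi$ applied over a time interval of order $\tau/\lambda$ acts, as $\lambda \to \infty$, as the pure multiplication $e^{-i\tau\varphi}$, and that finer two-scale oscillations of the control allow the squared edgewise derivative $-\alpha\bigl((\varphi^j)'\bigr)^2$ to appear as an effective phase; this is exactly what the recurrence \eqref{eq:defHNgraph} encodes. Iterating the construction produces every $\varphi \in \mathcal{H}_\infty$ as an asymptotically reachable phase on arbitrarily short time intervals, and the density hypothesis on $\mathcal{H}_\infty$ then upgrades this to any $\phi \in L^2(\mathcal{G};\R)$, yielding the small-time isomodulus approximate controllability statement.

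The main anticipated difficulty is not the abstract conjugation scheme, which transfers essentially verbatim from the appendix, but rather the bookkeeping of vertex conditions throughout the iteration. Squared edgewise derivatives $\bigl((\varphi^j)'\bigr)^2$ are typically discontinuous across internal vertices, so only those combinations selected by the stabilization filter of \eqref{eq:defHNgraph} survive; checking that the appendix's approximation arguments remain valid under this filter on $\mathcal{G}$, and in particular that no boundary contribution is lost when computing the commutators that effectively generate $\mathcal{H}_\infty$, is where the genuine work lies.
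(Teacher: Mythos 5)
Your proposal is correct and follows essentially the same route as the paper: reduce to the abstract conjugation result (Theorem~\ref{thm:abstract}), instantiate it on $L^2(\mathcal G)$ with $S$ the multiplication by $\varphi$ so that $\tfrac12\,{\rm ad}^2_S(H_0)=-(\partial_x\varphi)^2$, run the recurrence over the cones $\mathcal H_N$ exactly as in the proof of Theorem~\ref{thm:manifoldirichletneumann}, and conclude by density of $\mathcal H_\infty$. Your explicit computation showing that the stabilization filter at a Neumann--Kirchhoff vertex amounts to $\varphi$ itself satisfying the Kirchhoff condition is a correct (and slightly more detailed) account of what the paper leaves implicit in the definitions \eqref{eq:defH0subspacegraph}--\eqref{eq:defHNgraph}.
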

The proof of \Cref{th:smalltimeapproximatequantumgraphs} is given in \Cref{sec:applicationquantumgraphs}.

In the next two parts, we consider two examples of quantum graphs in which \Cref{th:smalltimeapproximatequantumgraphs} can be applied. 
In the first example we exhibit a quantum graph with topology different from that of the circle and admitting eigenfunctions that share the same modulus. 
Each of these eigenfunctions corresponds to an eigenvalue of multiplicity $3$. Note also that in this case the Schrödinger operator admits eigenfunctions corresponding to simple eigenvalues that do not share the same modulus between themselves because of \Cref{lem:simplegraph} and actually do not share the same modulus with all other eigenfunctions. This highlights the variety of situations that may occur. In the second example we exhibit a quantum graph whose corresponding eigenvalues are all nonsimple (apart from the ground state) and that does not admit eigenfunctions sharing the same modulus and corresponding to different eigenvalues.

For convenience, we use in the following two sections the following notation: for every $\lambda\in \R$, $c_\lambda:[0,2\pi]\to \R$, $s_\lambda:[0,2\pi]\to \C$, $\eta_\lambda:[0,2\pi]\to \R$ denote the functions defined by 
\[c_\lambda(x)=\cos(\lambda x),\qquad s_\lambda(x)=\sin(\lambda x),\qquad \eta_\lambda(x)=e^{i\lambda x}.\] 
According to this notation,  $\eta_\lambda=c_\lambda+i s_\lambda$. 

\subsection{The example of the eight graph}
\label{subsection-graph-8}

We consider here the compact graph $\mathcal G$ with $N=2$ edges $e_1,e_2$, both  of length $2 \pi$, with $M=1$ vertex. 
Namely, $\mathcal G$ is composed of two loops having the same length $L=2 \pi$ that are linked to the same vertex, see \Cref{fig:eightgraph}.
\begin{figure}
 \centering
\includegraphics[width=8cm]{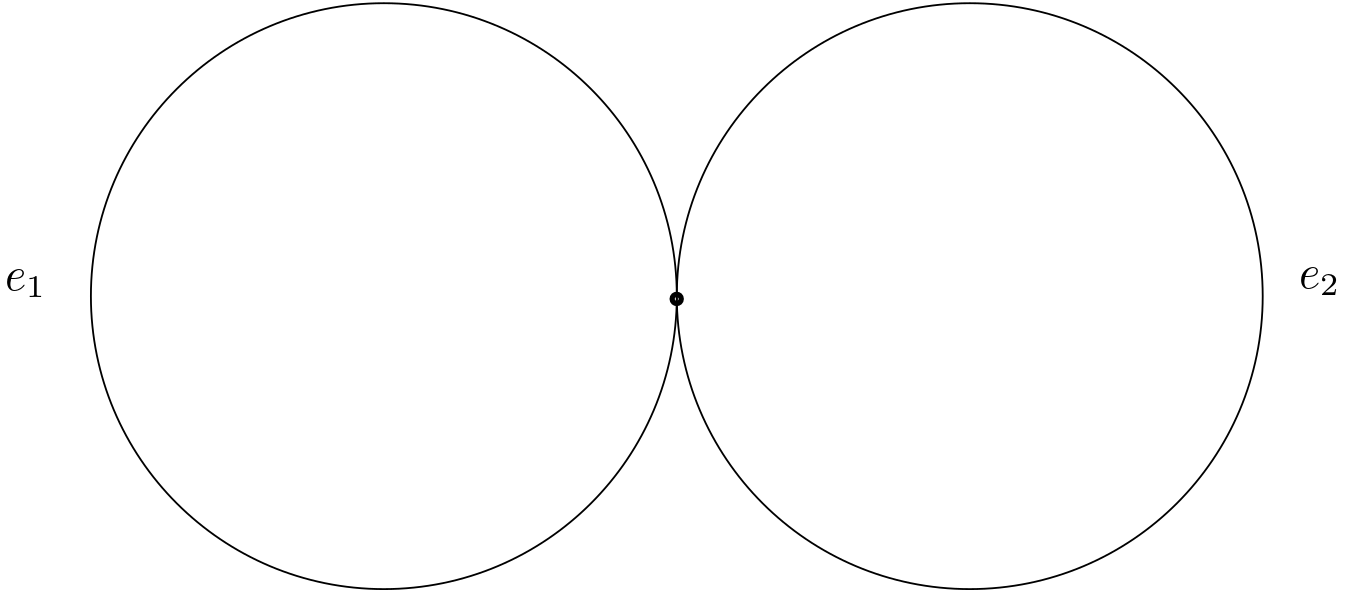}
    \caption{The eight graph }
    \label{fig:eightgraph}
\end{figure}

We consider the Hamiltonian $H_0 = -\partial_{x}^2$ on $\mathcal G$ and Neumann--Kirchoff boundary conditions at the single vertex.

We have the following result.
\begin{prop}
    The eigenvalues and eigenfunctions are given by
 \[       \lambda_0 = 0 \mbox{ of multiplicity 1},\ \phi_0 = (1,1),\]
 plus, for every $k \geq 0$,
 \[        \lambda_{k,o} = \left(\frac{2k+1}{2}\right)^2  \mbox{ of multiplicity 1}, \  \phi_{k,o} = \left( 
 s_{\frac{2k+1}{2}},-s_{\frac{2k+1}{2}}
\right),\]
 and finally, for $k\ge 1$, 
\begin{align*}   
\lambda_{k,e} &= k^2 \mbox{ of multiplicity 3},\nonumber\\
         \phi_{k,e,1} &= 
         (c_k,c_k),\ \phi_{k,e,2} =
         (s_k,0), 
      \ \phi_{k,e,3}= (0,s_k). 
    \end{align*}
    
\end{prop}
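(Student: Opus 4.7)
The plan is to write any eigenfunction as $\phi=(\phi^1,\phi^2)$, solve the ODE $-\partial_x^2\phi^j=\lambda\phi^j$ on each loop $e_j$ parametrized by $[0,2\pi]$ (with the endpoints $0$ and $2\pi$ both glued to $v$), and then impose the two gluing conditions: continuity, which reads
\[
\phi^1(0)=\phi^1(2\pi)=\phi^2(0)=\phi^2(2\pi),
\]
and the Neumann--Kirchoff condition, which by \eqref{eq:derivativeatavertexloop} takes the form
\[
\sum_{j=1}^{2}\bigl(\partial_x\phi^j(0)-\partial_x\phi^j(2\pi)\bigr)=0.
\]
Once these finite-dimensional linear constraints are solved, completeness of the family $\{\phi_0,\phi_{k,o},\phi_{k,e,1},\phi_{k,e,2},\phi_{k,e,3}\}$ is automatic by self-adjointness and compactness of the resolvent.

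For $\lambda=0$ the general solution on each loop is affine, $\phi^j(x)=A_j+B_jx$; the continuity at both ends of the loop forces $B_j=0$, and the coincidence $A_1=A_2$ yields the one-dimensional eigenspace spanned by $(1,1)$. For $\lambda=\omega^2>0$, I would write $\phi^j(x)=A_jc_\omega(x)+B_js_\omega(x)$ and split according to the value of $\sin(2\pi\omega)$ and $\cos(2\pi\omega)$:

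\textbf{Integer $\omega=k\ge 1$.} Then $\sin(2\pi\omega)=0$ and $\cos(2\pi\omega)=1$, so the continuity at each loop is automatic, as is the Neumann--Kirchoff identity; only the cross-continuity $A_1=A_2$ survives, giving the three-dimensional eigenspace spanned by $(c_k,c_k)$, $(s_k,0)$ and $(0,s_k)$.

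\textbf{Half-integer $\omega=(2k+1)/2$.} Then $\sin(2\pi\omega)=0$ and $\cos(2\pi\omega)=-1$, so continuity at each loop forces $A_j=0$, and the Neumann--Kirchoff identity becomes $B_1+B_2=0$, producing the one-dimensional eigenspace spanned by $(s_{(2k+1)/2},-s_{(2k+1)/2})$.

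\textbf{Other $\omega$.} In this regime $\sin(2\pi\omega)\neq 0$. The intra-loop continuity gives $B_j=A_j(1-\cos(2\pi\omega))/\sin(2\pi\omega)$ and $A_1=A_2=:A$, and substitution into the Neumann--Kirchoff condition leads to
\[
\frac{2A\,(1-\cos(2\pi\omega))}{\sin(2\pi\omega)}=0,
\]
which forces $A=0$, hence $\phi\equiv 0$. So no new eigenvalue appears here.

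The only step with any subtlety is keeping track of the loop convention in \eqref{eq:derivativeatavertexloop} and of the fact that continuity at a loop vertex produces two equalities per edge rather than one; the algebra itself is routine trigonometry. Collecting the three cases yields exactly the list of eigenvalues and eigenfunctions in the statement, and orthogonality (hence the claimed multiplicities being the full multiplicities) follows from the spectral theorem.
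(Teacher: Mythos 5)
Your proof is correct and follows essentially the same route as the paper: write the eigenfunction edgewise as a combination of $c_\omega$ and $s_\omega$, impose continuity and the Neumann--Kirchhoff condition at the single vertex, and split into the cases $\omega$ integer, half-integer, and neither (where the resulting linear system is nondegenerate). The only point you leave implicit is why $\lambda<0$ cannot occur; the paper dispatches this in one line by observing that $H_0$ is nonnegative, since its quadratic form is $\sum_{j}\int_{e_j}|\partial_x\phi^j|^2$.
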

\begin{proof}
We know that the eigenvalues are of the form $\lambda^2$, $\lambda \in \R$, because the Hamiltonian operator is nonnegative. As a consequence, on each edge, the eigenfunction is a solution to
\begin{equation*}
    \partial_{x}^2 + \lambda^2 \psi = 0.
\end{equation*}
So, the eigenfunctions are of the form
\begin{equation*}
    \phi = (\phi^1, \phi^2) =  
    ( A_1 c_\lambda 
    + B_1 s_\lambda, 
    A_2 c_\lambda 
    + B_2 s_\lambda). 
\end{equation*}
We then write the boundary conditions, that are the continuity and the Neumann--Kirchoff  conditions at the vertex, leading to
\begin{align}
   & \phi^1(0) = \phi^2(0) \Rightarrow A= A_1=A_2,\\
   & \phi^1(2 \pi) = \phi^2(2 \pi) \Rightarrow B_1 \sin (2 \pi \lambda) = B_2 \sin (2 \pi \lambda),\label{B1B2}\\
   &  \phi^1(0) = \phi^1(2 \pi) \Rightarrow A = A \cos(2 \pi \lambda )+ B_1 \sin( 2 \pi \lambda),\label{eq:continuityeight}\\
   & \partial_{x} \phi^{1}(0) - \partial_{x}\phi^{1}(2 \pi) + \partial_{x}\phi^{2}(0) - \partial_{x}\phi^{2}(2 \pi) = 0\nonumber \\ & \ \Rightarrow B_1 \lambda - (-A \lambda \sin ( 2 \pi \lambda) + B_1 \lambda \cos(2 \pi \lambda)) + B_2 \lambda - (-A \lambda \sin ( 2 \pi \lambda) + B_2 \lambda \cos(2 \pi \lambda)) = 0.\label{eq:kirchoffeight}
\end{align}

If $\lambda \notin \N^{*}/2$, then we obtain from \eqref{B1B2} that $B=B_1 = B_2$. We therefore deduce from \eqref{eq:kirchoffeight} that
\begin{equation*}
   2 B  (\lambda - \lambda \cos(2 \pi \lambda)) + 2 A \lambda \sin(2 \pi \lambda) = 0.
\end{equation*}
Taking into account also \eqref{eq:continuityeight}, we find that $(A,B)$ satisfies the linear system
\begin{equation*}
  \begin{pmatrix}
1- \cos(2 \pi \lambda)  & - \sin( 2 \pi \lambda)\\
\sin( 2 \pi \lambda) & 1- \cos(2 \pi \lambda)
\end{pmatrix}    \begin{pmatrix}
A\\
B
\end{pmatrix}  
= \begin{pmatrix}
0\\
0
\end{pmatrix}  .
\end{equation*}
The determinant of the matrix is different from $0$,  leading to $A=B=0$.

So necessarily $\lambda \in \N^{*}/2$. If $\lambda = k/2$ with $k$ odd, we  find that $A= 0$ by \eqref{eq:continuityeight} and 
$B_1 = - B_2$ by the Neumann--Kirchoff condition \eqref{eq:kirchoffeight} . Finally, if $\lambda = k/2$ with $k$ even, we find that all the conditions are satisfied if $A_1=A_2$, that is, $A,B_1,B_2$ are free parameters. In the case $k=\lambda=0$ the space parameterized by $A,B_1,B_2$ is one-dimensional, otherwise it has dimension 3.

To conclude,
we have 
that $\lambda = k/2$, $k$ odd, is a simple eigenvalue with corresponding eigenfunction $\phi_{k,o}$ 
and $\lambda = k/2$, $k$ even, corresponds
either to a simple eigenvalue for $k=0$ or
to an 
eigenvalue of multiplicity $3$ for $k>0$, associated with eigenfunctions of the type $\phi_{k,e,j}$, $j=1,2,3$. 
\end{proof}

\begin{prop}
\label{prop:pointphaseeightgraph}
    For every $k,l \geq 0$, $k \neq l$,  there exists an eigenfunction corresponding to $k^2$ 
    and an eigenfunction corresponding to $l^2$
    that share the same modulus.

    For every $k \geq 1$, there exist two $\C$-linearly independent eigenfunctions corresponding to $k^2$ that 
    share the same modulus.

    For every $k,l \geq 0$, $k \neq l$, two eigenfunctions corresponding to $\left(\frac{2k+1}{2}\right)^2$ and $\left(\frac{2l+1}{2}\right)^2$ 
do not share the same modulus.

    For every $k \geq 0$, $l \geq 1$, two eigenfunctions corresponding to $\left(\frac{2k+1}{2}\right)^2$ and $l^2$ 
    do not share the same modulus.
\end{prop}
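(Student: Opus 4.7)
The plan splits naturally along the four assertions: the first two are constructive, exhibiting explicit linear combinations with constant modulus, while the last two rest on Fourier uniqueness on each edge.

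For parts 1 and 2, I would simply write down the relevant combinations inside the three-dimensional eigenspaces. Inside $E_{k^2}$ for $k \geq 1$, the combinations
\[
\phi_{k,e,1} \pm i\bigl(\phi_{k,e,2}+\phi_{k,e,3}\bigr) \;=\; (\eta_{\pm k},\eta_{\pm k})
\]
are two $\C$-linearly independent eigenfunctions whose modulus is constantly equal to $1$ on the whole of $\mathcal{G}$, which already gives part 2. Together with the ground state $\phi_0 = (1,1)$, which also has constant modulus $1$, this provides, in every eigenspace $E_{k^2}$ with $k \geq 0$, an eigenfunction of constant modulus $1$, whence part 1.

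For parts 3 and 4, the strategy is to compare, on a single edge, the squared moduli of any candidate pair as real trigonometric polynomials on $[0,2\pi]$ and invoke orthogonality of distinct frequency modes. The eigenspace associated with $((2k+1)/2)^2$ is one-dimensional, so any nonzero eigenfunction is $c\,\phi_{k,o}$ with $c\in\C\setminus\{0\}$ and on each edge
\[
|c\,\phi_{k,o}(x)|^2 \;=\; \tfrac{|c|^2}{2}\bigl(1 - \cos((2k+1)x)\bigr),
\]
whose only nonconstant Fourier mode sits at the odd frequency $2k+1$. On the other hand, any eigenfunction in $E_{l^2}$ with $l \geq 1$ restricts on an edge to $\alpha c_l + \beta s_l$ with $\alpha,\beta \in \C$, and the identity
\[
|\alpha c_l + \beta s_l|^2 \;=\; |\alpha|^2 c_l^2 + 2\RE(\bar\alpha\beta) c_l s_l + |\beta|^2 s_l^2
\]
combined with the standard double-angle formulas rewrites this as a real trigonometric polynomial $A + B\cos(2lx) + C\sin(2lx)$ whose only nonconstant modes sit at the even frequency $2l$.

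For part 4, matching these expansions on one edge forces the coefficient $-|c|^2/2$ of $\cos((2k+1)x)$ to vanish, because $2l$ is even and $2k+1$ is odd and hence they are distinct elements of the Fourier basis; this gives $c=0$, contradicting the nontriviality of the eigenfunction. For part 3, when $k \neq l$ the two odd frequencies $2k+1$ and $2l+1$ are still distinct, and the same orthogonality argument forces both scaling constants to vanish, yielding the desired contradiction. The only subtle point is to ensure that $|\alpha c_l + \beta s_l|^2$ is a genuine real trigonometric polynomial of the claimed form for complex $\alpha,\beta$, which is settled by the explicit expansion above; once this is checked, the rest is a direct application of Fourier uniqueness on $[0,2\pi]$.
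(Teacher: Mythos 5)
Your proof is correct. Parts 1 and 2 coincide with the paper's argument: the paper forms $\phi_{k,e,1} \pm i(\phi_{k,e,2}+\phi_{k,e,3}) = (\eta_{\pm k},\eta_{\pm k})$ exactly as you do and observes that these, together with $\phi_0=(1,1)$, all have constant modulus $1$. Where you diverge is in parts 3 and 4. For part 3 the paper does not compute anything: it simply notes that the eigenvalues $\left(\frac{2k+1}{2}\right)^2$ are simple and invokes the general Lemma~\ref{lem:simplegraph} (simple distinct eigenvalues never admit eigenfunctions sharing the same modulus), whereas you expand $|c\,\phi_{k,o}|^2=\frac{|c|^2}{2}(1-\cos((2k+1)x))$ edgewise and use Fourier uniqueness. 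For part 4 the paper is more elementary still: writing $|\sin(\frac{2k+1}{2}x)|=|C_1\cos(lx)+C_2\sin(lx)|$ on an edge, evaluation at $x=0$ gives $C_1=0$ and evaluation at $x=\pi$ (where $\sin(\frac{(2k+1)\pi}{2})=\pm1$ but $\sin(l\pi)=0$) gives a contradiction. Your trigonometric-polynomial argument, resting on the parity separation of the frequencies $2k+1$ (odd) and $2l$ (even), is sound — the expansion $|\alpha c_l+\beta s_l|^2=|\alpha|^2c_l^2+2\RE(\bar\alpha\beta)c_ls_l+|\beta|^2s_l^2$ together with the double-angle formulas is checked correctly — and it has the merit of treating parts 3 and 4 uniformly and of not needing the abstract simplicity lemma; the paper's route buys brevity (two point evaluations) and reuses machinery already established for general graphs.
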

\begin{proof}
For the first point, let us remark that, for $k\ge 1$,
\begin{equation*}
    \phi_{k,e,1} + i \phi_{k,e,2} + i \phi_{k,e,3} = (\eta_k, \eta_k),
\end{equation*}
so that $|\phi_{k,e,1} + i \phi_{k,e,2} + i \phi_{k,e,3}|=(1,1)$. 
 
For the second point, let us notice that
\begin{equation*}
    - \phi_{k,e,1} + i \phi_{k,e,2} + i \phi_{k,e,3} = (\eta_{-k},\eta_{-k}).
\end{equation*}

For the third point, we use the fact that the eigenvalues are simple and \Cref{lem:simplegraph}.

For the fourth point, let us argue by contradiction. Assume that there exist $k \geq 0$, $l \geq 1$, $C_1, C_2 \in \C$ such that
\begin{equation*}
    \left|\sin\left(\frac{2k+1}{2}x\right)\right| = \left|C_1 \cos(lx) + C_2 \sin(lx)\right|,\qquad x\in [0,2\pi].
\end{equation*}
Taking $x=0$ leads to $C_1=0$ and then taking $x=\pi$ one gets a contradiction.
\end{proof}

We define the potentials
\begin{align*}
    Q_1 &= (1, 1),\ Q_2 = (c_1,c_1),
    \ Q_3 = (s_1,0),\ 
    Q_4 =(0,s_1),\ 
    Q_5  =\left(s_{\frac12},-s_{\frac12}\right),\  
    Q_6 =\left(c_{\frac12},c_{\frac12}\right).
\end{align*}

We have the following lemma.
\begin{lemma}
\label{lem:testdensityHinfty}
    The set $\mathcal{H}_{\infty}$ is dense in $L^2(\mathcal G;\R)$. 
\end{lemma}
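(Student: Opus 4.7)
The plan is to iteratively apply the squared-gradient operation to elements of $\mathcal{H}_0$ in order to build up, within $\mathcal{H}_\infty$, a family of trigonometric functions whose linear combinations are dense in $L^2(\mathcal G;\R)$.

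First I would verify that $\mathcal{H}_0\supset\mathrm{span}\{Q_1,Q_2,Q_3,Q_4,Q_5\}$: each is continuous at the single vertex (with matching constant values for $Q_1,Q_2$ or vanishing there for $Q_3,Q_4,Q_5$), and multiplication by each preserves the Neumann--Kirchhoff condition---for $Q_5=(s_{1/2},-s_{1/2})$ via the antisymmetric cancellation of derivative contributions across the two loops. The function $Q_6=(c_{1/2},c_{1/2})$ is discontinuous at the vertex ($c_{1/2}(0)=1\ne-1=c_{1/2}(2\pi)$) and thus is not in $\mathcal{H}_0$; this turns out to be harmless for the saturation argument.

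Next I would perform the core computations that generate new elements. The cornerstone identities $|\nabla(Q_3+Q_4)|^2=\frac12 Q_1+\frac12(c_2,c_2)$ and $|\nabla Q_2|^2=\frac12 Q_1-\frac12(c_2,c_2)$, combined with $\frac12 Q_1\in\mathcal{H}_0$, place \emph{both signs} of $(c_2,c_2)$ in $\mathcal{H}_1$. Taking $\psi=Q_2\pm Q_3\pm Q_4$ and using $(c_1\mp s_1)^2=1\mp s_2$ similarly produces both signs of $(s_2,s_2)$ and $(s_2,-s_2)$ in $\mathcal{H}_1$. For the half-integer antisymmetric mode, expanding $|\nabla(aQ_5+bQ_2)|^2$ via $c_{1/2}s_1=\frac12(s_{3/2}+s_{1/2})$ and combining in $\mathcal{H}_2$ with $|\nabla(Q_3+Q_4)|^2$ plus $\mathcal{H}_0$ corrections isolates $(s_{3/2},-s_{3/2})$ with both signs (via $(a,b)=(\pm1,1)$). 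By induction on frequency, using the identities $\sin^2\alpha x=\frac12(1-\cos2\alpha x)$, $\cos^2\alpha x=\frac12(1+\cos2\alpha x)$, and $\sin(\alpha x)\cos(\beta x)=\frac12[\sin((\alpha+\beta)x)+\sin((\alpha-\beta)x)]$, both signs of $(c_k,c_k)$, $(s_k,s_k)$, $(s_k,-s_k)$ for $k\ge1$ and $(s_{(2k+1)/2},-s_{(2k+1)/2})$ for $k\ge0$ eventually lie in some $\mathcal{H}_N$. Each preserves $\mathrm{Dom}(H_0)$ since half-integer sines vanish at $0$ and $2\pi$ and the antisymmetric structure cancels the NK sum.

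Density in $L^2(\mathcal G;\R)$ then follows from the symmetric--antisymmetric decomposition $L^2(\mathcal G;\R)=L^2_{\mathrm{sym}}\oplus L^2_{\mathrm{asym}}$ induced by the edge-swap involution. On $L^2_{\mathrm{sym}}\cong L^2(\R/2\pi\Z;\R)$, the Fourier basis $\{1,\cos(kx),\sin(kx)\}_{k\ge1}$ is matched by our generators $(1,1),(c_k,c_k),(s_k,s_k)$; on $L^2_{\mathrm{asym}}\cong L^2([0,2\pi];\R)$, the Dirichlet sine basis $\{\sin(kx/2)\}_{k\ge1}$ (whose members vanish at $0$ and $2\pi$) is matched by $(s_{k/2},-s_{k/2})$, with integer frequency when $k$ is even and half-integer when $k$ is odd. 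Finite linear combinations of these generators are therefore dense in $L^2(\mathcal G;\R)$, proving that $\mathcal{H}_\infty$ is dense.

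The main technical obstacle is that the cones $\mathcal{H}_N$ are not subspaces: each iteration only appends non-positive corrections $-\alpha|\nabla\psi|^2$. The \emph{two-$\psi$} trick illustrated above for $(c_2,c_2)$---finding $\psi_+,\psi_-$ in the previous cone whose squared-gradient expansions agree except in the sign of the target term---overcomes this throughout the induction. Careful bookkeeping of the iteration level producing each frequency, and systematic verification of the domain-preservation conditions at every step, constitute the bulk of the technical work.
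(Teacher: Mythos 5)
Your argument is correct and follows the same overall strategy as the paper's proof: seed $\mathcal{H}_0$ with the potentials, use trigonometric addition formulas rewritten as sums of terms $-\alpha\lvert\nabla\psi\rvert^2$ to climb in frequency while tracking \emph{both} signs of each generator (so that the convex cone $\mathcal{H}_\infty$ contains the full linear span), and conclude by completeness of the resulting trigonometric system. Your final density step via the symmetric/antisymmetric decomposition under the edge swap is equivalent to the paper's statement that all eigenfunctions lie in $\mathcal{H}_\infty$, since the two families span the same dense subspace. The one place where you genuinely depart from the paper is the generation of the half-integer antisymmetric modes $(s_{(2k+1)/2},-s_{(2k+1)/2})$, and there your route is the more careful one: you correctly observe that $Q_6=(c_{1/2},c_{1/2})$ satisfies $c_{1/2}(0)=1\ne-1=c_{1/2}(2\pi)$, hence is discontinuous at the vertex of the eight graph, does not stabilize $\mathrm{Dom}(H_0)$, and therefore belongs to no $\mathcal{H}_N$; the paper's proof instead invokes $(c_{1/2},c_{1/2})\in\mathcal{H}_\infty$ and squares gradients of combinations such as $2c_{1/2}\pm s_m/m$, which are discontinuous at the vertex for the same reason. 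Your workaround --- extracting $s_{(2k+1)/2}$ from the cross term of $\lvert\nabla(aQ_5+b(c_m,c_m))\rvert^2$, where the discontinuous factor $c_{1/2}$ only ever appears multiplied by a function vanishing at the vertex --- repairs this. Two points to nail down when you write out the ``bookkeeping'': (i) in the integer-frequency induction, obtain $\pm(c_m^2,c_m^2)$ directly as $-m^{-2}\lvert\nabla(s_m,s_m)\rvert^2$ and $Q_1-m^{-2}\lvert\nabla(c_m,c_m)\rvert^2$ (your cornerstone identities for $m=1$), rather than through $c_m^2=\tfrac12(1+c_{2m})$, which would require producing the frequency $2m$ before reaching $m+1$; (ii) verify at each step that the new generator is continuous at the vertex \emph{and} has vanishing Neumann--Kirchoff derivative sum there, since together these are exactly the conditions for stabilizing $\mathrm{Dom}(H_0)$ (your generators all pass this test).
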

\begin{proof}

The main ingredient of the proof consists in establishing that
    \begin{equation*}
        \{\phi_0\} \cup \{\phi_{k,o}\mid k \geq 0\} \cup \{\phi_{k,e,j}\mid k \geq 1,\; j=1,2,3\} \subset \mathcal{H}_{\infty}.
    \end{equation*}
    
    First note that these functions stabilize $\text{Dom}(H_0)$ and that  $\phi_0=Q_1$, $\phi_{0,o}=Q_5$, $\phi_{1,e,1}=Q_2$, $\phi_{1,e,2}=Q_3$, $\phi_{1,e,3}=Q_4$ belong to $\mathcal{H}_0 \subset \mathcal{H}_{\infty}$. 

Let us then prove that $\phi_{k,e,j} \in  \mathcal{H}_{\infty}$ 
for every $k \geq 1$ and $j=1,2,3$.
Notice that the property is true for $k=1$. 
The argument is based on the trigonometric formulas
\begin{equation}
\label{eq:simpletrigo}
\cos(mx)^2 + \sin(mx)^2= 1,\ \cos(lx)^2 +  \sin(lx)^2= 1,
\end{equation} 
\begin{multline}
\label{first:mll}
    \cos((m+l)x) = \cos(mx) \cos(lx) - \sin(mx) \sin(lx)\\
    = \left(- \frac{1}{2} (\cos(mx) - \cos(lx))^2 + \frac{1}{2} \cos(mx)^2  + \frac{1}{2} \cos(lx)^2\right) \\
    +\left( -\frac{1}{2}(\sin(mx) + \sin(lx))^2 + \frac{1}{2} \sin(mx)^2 +  \frac{1}{2} \sin(lx)^2\right),
\end{multline}
and
\begin{multline}\label{second:mll}
    \sin((m+l)x) = \sin(mx) \cos(lx) + \sin(lx) \cos(mx)\\
    = \left(- \frac{1}{2} (\sin(mx) - \cos(lx))^2 + \frac{1}{2} \sin(mx)^2 +  \frac{1}{2} \cos(lx)^2 \right) \\
    + \left(-\frac{1}{2} (\sin(lx) - \cos(mx))^2 + \frac{1}{2} \sin(lx)^2 +  \frac{1}{2} \cos(mx)^2\right),
\end{multline}
holding for every $l,m\in \R$. 
For $m,l\ne 0$ we can rewrite \eqref{eq:simpletrigo} as  
\begin{equation}
c_m^2 = 1 - \left(\frac{c_m'}{m}\right)^2,\ s_m^2 = 1 - \left(\frac{s_m'}{m}\right)^2, \qquad c_l^2 = 1 - \left(\frac{c_l'}{l}\right)^2,\ s_l^2 = 1 - \left(\frac{s_l'}{l}\right)^2,
\end{equation}
and \eqref{first:mll} as
\begin{equation}
\label{eq:findcos}
    2c_{m+l}=   - \left(\frac{s_m'}{m}  - \frac{s_l'}{l}\right)^2 + c_m^2 +   c_l^2 -   \left(\frac{c_m'}{m}  + \frac{c_l'}{l} \right)^2 +  s_m^2 +   s_l ^2.
\end{equation}
Assume for now that 
$\pm(c_m,c_m)$, $\pm(c_l,c_l)$, 
$\pm(s_m,s_m)$, $\pm(s_l,s_l)$ 
belong to $\mathcal{H}_{\infty}$.
Then the same is true first for $ (c_m^2,c_m^2)$, $(c_l^2,c_l^2)$, 
$(s_m^2,s_m^2)$, $(s_l^2,s_l^2)$ 
and then  for 
$(c_{m+l},c_{m+l})$. 
Similarly, noticing that, in analogy to \eqref{first:mll},  
\[-c_{m+l}=\frac12(-(s_m-s_l)^2+s_m^2+s_l^2)+\frac12(-(c_m+s_l)^2+c_m^2+c_l^2),\]
one deduces that also 
$-(c_{m+l},c_{m+l})$ is in 
$\mathcal{H}_{\infty}$. 
A similar reasoning based on \eqref{second:mll}
also shows that 
$\pm(s_{m+l},s_{m+l}) \in \mathcal{H}_{\infty}$.

Taking $l=1$ and by recurrence on $m\ge 1$ (integer) this proves that $\phi_{k,e,1}$ is in  $\mathcal{H}_{\infty}$ 
for every $k \geq 1$.

We proceed in the same way to prove that $\pm(s_{m+l},-s_{m+l})
\in \mathcal{H}_{\infty}$ provided that 
$\pm(c_m,c_m)$, $\pm(c_l,c_l)$, $\pm(s_m,0)$, $\pm(s_l,0)$, $\pm(0,s_m)$, $\pm(0,s_l)$
belong to $\mathcal{H}_{\infty}$. 
Hence $\phi_{k,e,j}$, $j=2,3$, is in  $\mathcal{H}_{\infty}$ 
for every $k \geq 1$.

The proof for the functions $\phi_{k,o}$ 
uses the fact that
$(s_1, -s_1)$, $(c_1, c_1)$, $(s_{1/2}, -s_{1/2})$, $(c_{1/2}, c_{1/2})$ belong to $\mathcal H_{\infty}$ and that
\begin{equation*}
\label{eq:findsin}
   \pm 2 s_{m+\frac12} 
    =  - \left(\frac{c_m'}{m} \pm  2s_{\frac12}'\right)^2 +   s_{\frac12}^2+  c_m^2  -   \left(2 c_{\frac12}' \pm \frac{s_m'}{m}\right)^2 +    s_m^2+   c_{\frac12}^2 ,
\end{equation*}
which follows 
taking $m\ne 0$ and $l=1/2$ in  \eqref{second:mll} and in its equivalent reformulation
\[ -s_{m+l}= 
\left(-\frac{1}{2} (s_m +c_l)^2 + \frac{1}{2} s_m^2 +  \frac{1}{2} c_l^2 \right)     + \left(-\frac{1}{2} (s_l +c_m)^2 + \frac{1}{2} s_l^2 +  \frac{1}{2} c_m^2\right).\]
The conclusion is then obtained by recurrence on $m\ge 1$ (integer).
\end{proof}

\begin{theorem}
\label{th:resultisographeight}
   Equation \eqref{eq:ScBilinearGraph} is small-time isomodulus
   approximately controllable.
   Moreover, for every $k,l \in \Z$, we have that
   \begin{equation*}
       ( \eta_{l}, \eta_{l}) \in \overline{\mathcal{R}_0( \eta_{k}, \eta_{k})}.
   \end{equation*}
\end{theorem}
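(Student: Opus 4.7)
The plan is straightforward: the theorem is a direct consequence of the two preparatory results just proven, namely \Cref{th:smalltimeapproximatequantumgraphs} (the abstract small-time isomodulus approximate controllability criterion for quantum graphs) and \Cref{lem:testdensityHinfty} (the density of $\mathcal H_\infty$ in $L^2(\mathcal G;\R)$ for the specific choice of potentials $Q_1,\dots,Q_6$ on the eight graph).

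For the first assertion, I would simply combine \Cref{lem:testdensityHinfty}, which verifies that $\mathcal H_{\infty}$ is dense in $L^2(\mathcal G;\R)$, with \Cref{th:smalltimeapproximatequantumgraphs}, which yields small-time isomodulus approximate controllability of \eqref{eq:ScBilinearGraph} from every $\psi_0 \in \mathcal S$.

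For the second assertion, the key observation is that the modulus of $(\eta_k,\eta_k)$ equals $(1,1)$ independently of $k$, so any two such states differ by a purely multiplicative phase. Explicitly, setting $\phi(x)=(l-k)x$ on each of the two edges of $\mathcal G$ (without requiring any continuity at the vertex), one has $\phi \in L^\infty(\mathcal G;\R) \subset L^2(\mathcal G;\R)$ and
\[
(\eta_l,\eta_l) = e^{i\phi}\,(\eta_k,\eta_k).
\]
After normalizing to the unit sphere by dividing by $2\sqrt\pi$, the first part of the theorem applied to $\psi_0=(\eta_k,\eta_k)/(2\sqrt\pi)\in \mathcal S$ shows that $(\eta_l,\eta_l)/(2\sqrt\pi)$ belongs to the small-time approximately reachable set from $\psi_0$, whence $(\eta_l,\eta_l)\in \overline{\mathcal R_0((\eta_k,\eta_k))}$ as claimed.

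Because the heavy lifting (the abstract controllability theorem and the verification of the saturation condition) is already done in the preceding material, I do not anticipate any real obstacle in the proof itself; the only point requiring a bit of care is making explicit the real-valued phase $\phi$ that conjugates one target to the other on the graph and observing that it lies in $L^2(\mathcal G;\R)$, which is trivial here since the edges have finite length and $\phi$ is bounded.
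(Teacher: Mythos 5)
Your proposal is correct and follows essentially the same route as the paper, which derives the theorem directly from \Cref{th:smalltimeapproximatequantumgraphs} together with \Cref{lem:testdensityHinfty} and the fact (recorded in \Cref{prop:pointphaseeightgraph}) that the states $(\eta_k,\eta_k)$ all share the modulus $(1,1)$. Your explicit edgewise phase $\phi(x)=(l-k)x$ and the normalization by $2\sqrt{\pi}$ simply spell out details the paper leaves implicit.
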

\begin{proof}
    This is a direct consequence of \Cref{th:smalltimeapproximatequantumgraphs}, \Cref{prop:pointphaseeightgraph}, and \Cref{lem:testdensityHinfty}.
\end{proof}

\subsection{The example of the graph with three branches}
\label{subsection-graph-3branches}

We consider now as $\mathcal G$ the compact graph with $N=3$ edges $e_1,e_2, e_3$, each of them of length $2 \pi$, with $M=2$ vertices $v_1,v_2$ and no loops. 
Namely, $\mathcal G$ is composed of three edges having the same length $L=2 \pi$ that are linked to the same vertices. We orient the three edges 
 from $v_1$ to $v_2$.
See Figure~\ref{fig:Perforation}.

\begin{figure}
\centering
\includegraphics[width=7cm]{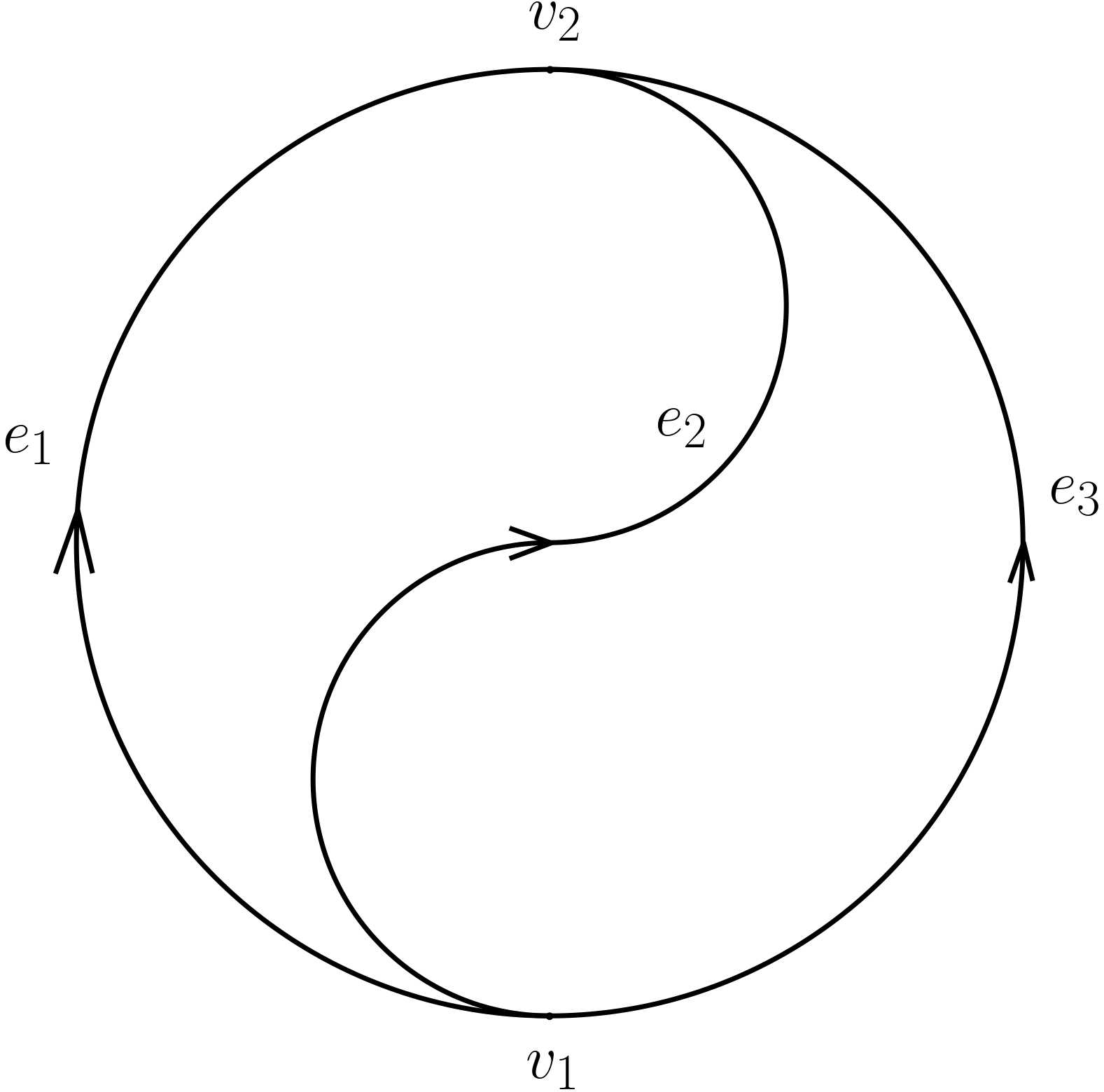}
\caption{The graph with three branches. }
\label{fig:Perforation}
\end{figure}

We consider the Hamiltonian $H_0 = -\partial_{x}^2$ on $\mathcal G$ and Neumann--Kirchoff boundary conditions.

We have the following result.
\begin{prop}
    The eigenvalues and eigenfunctions are given by
\[\lambda_0 = 0,\ \phi_0 = 1,\]
and, for $k \geq 1$,      
    \begin{align*}
  \lambda_{k} &= \left(\frac{k}{2}\right)^2\quad   \text{of multiplicity three},\\
        \phi_{k,1}&= \left( c_{\frac{k}{2}}, c_{\frac{k}{2}},  c_{\frac{k}{2}} \right),\quad 
       \phi_{k,2} = \left( -s_{\frac{k}{2}}, s_{\frac{k}{2}}, 0 \right),\quad 
        \phi_{k,3} = \left( -s_{\frac{k}{2}}, 0, s_{\frac{k}{2}} \right).
    \end{align*}
\end{prop}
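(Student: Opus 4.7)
The strategy would follow the same template as the one used for the eight graph in \Cref{subsection-graph-8}. Since $H_0=-\partial_x^2$ is a non-negative operator, every eigenvalue is of the form $\lambda^2$ with $\lambda\ge 0$, and I would parameterize each edge $e_j$ from the vertex $v_1$ (at $x=0$) to $v_2$ (at $x=2\pi$), looking for an eigenfunction with edgewise restriction $\phi^j = A_j c_\lambda + B_j s_\lambda$. The question then reduces to extracting the admissible values of $\lambda$ and the compatible six coefficients $(A_1,A_2,A_3,B_1,B_2,B_3)$ from the four vertex conditions: continuity and Kirchhoff at each of $v_1$ and $v_2$.

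Continuity at $v_1$ immediately forces $A_1=A_2=A_3=:A$, while continuity at $v_2$ forces the quantity $B_j\sin(2\pi\lambda)$ to be independent of $j\in\{1,2,3\}$. The Kirchhoff condition at $v_1$ (all edges emanate with parameter $0$ there) reads $\lambda(B_1+B_2+B_3)=0$, while the Kirchhoff condition at $v_2$, once one accounts for the sign convention \eqref{eq:derivativeatavertex} for the inward derivative, reads $\lambda(3A\sin(2\pi\lambda)-(B_1+B_2+B_3)\cos(2\pi\lambda))=0$.

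Two cases then have to be distinguished. If $\sin(2\pi\lambda)\ne 0$, the continuity condition at $v_2$ forces $B_1=B_2=B_3=:B$, and the two Kirchhoff conditions reduce to $3\lambda B=0$ together with $3\lambda(A\sin(2\pi\lambda)-B\cos(2\pi\lambda))=0$, which for $\lambda>0$ impose $A=B=0$, so no nontrivial eigenfunction arises. Nontrivial eigenvalues must therefore satisfy $\sin(2\pi\lambda)=0$, that is, $\lambda=k/2$ for some integer $k\ge 0$. For $k=0$ only the constant eigenfunction $\phi_0=1$ survives, yielding a simple kernel; for $k\ge 1$, continuity at $v_2$ becomes automatic, Kirchhoff at $v_1$ reduces to $B_1+B_2+B_3=0$, and Kirchhoff at $v_2$ is then automatic as well. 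Hence the eigenspace is parameterized by $(A,B_1,B_2,B_3)\in\R^4$ subject to the single linear constraint $B_1+B_2+B_3=0$, and is therefore three-dimensional.

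To close the argument, I would exhibit the three stated eigenfunctions $\phi_{k,1}$, $\phi_{k,2}$, $\phi_{k,3}$ as corresponding respectively to the choices $(A,B_1,B_2,B_3)=(1,0,0,0)$, $(0,-1,1,0)$, $(0,-1,0,1)$, and observe that they are manifestly $\C$-linearly independent, hence form a basis of the eigenspace associated with $\lambda_k=(k/2)^2$. The only genuine pitfall is the careful bookkeeping of the orientation of each edge and of the sign convention for the inward derivative at $v_2$ prescribed by \eqref{eq:derivativeatavertex}; once this is sorted out, the argument is a routine linear algebra computation.
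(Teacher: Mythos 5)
Your proposal is correct and follows essentially the same route as the paper: the same edgewise ansatz $\phi^j=A_jc_\lambda+B_js_\lambda$, the same four vertex conditions, and the same case split on $\sin(2\pi\lambda)$, with the eigenspace for $\lambda=k/2$, $k\ge 1$, identified as the three-dimensional set $\{(A,B_1,B_2,B_3)\mid B_1+B_2+B_3=0\}$. Your version is only marginally more explicit about the dimension count and the choice of basis vectors.
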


\begin{proof}
    We know that the eigenvalues are of the form $\lambda^2$, $\lambda \in \R$, because the Hamiltonian operator is nonnegative. As a consequence, on each edge, the eigenfunction is a solution to
\begin{equation*}
    \partial_{x}^2 + \lambda^2 \psi = 0.
\end{equation*}
So, the eigenfunctions are of the form
\begin{equation*}
    \phi = (\phi^j)_{1 \leq j \leq 3} =  ( A_j c_\lambda     + B_j s_\lambda    )_{1 \leq j \leq 3}.
\end{equation*}
The case $\lambda=0$ being easy, let us focus on the case $\lambda \neq 0$.

We then write the boundary conditions, that are the continuity and the Neumann--Kirchoff 
 conditions at each vertex, leading to
\begin{align}
   & \phi^1(0) = \phi^2(0) = \phi^3(0) \Rightarrow A_1 = A_2 = A_3 = A,\\
   & \phi^1(2 \pi) = \phi^2(2 \pi) = \phi^3(2\pi) \Rightarrow B_1 \sin(2 \pi\lambda) = B_2 \sin(2 \pi\lambda) = B_3 \sin(2 \pi\lambda),\label{eq:B1B2B3}\\
   &  \partial_{x}\phi^1(0)  + \partial_{x}\phi^2(0) + \partial_{x}\phi^3(0)  = 0
    \Rightarrow 
    B_1+B_2+B_3=0,
    \label{eq:kds}\\
   & \partial_{x}\phi^1(2 \pi)  + \partial_{x}\phi^2(2 \pi) + \partial_{x}\phi^3(2 \pi)  = 0\nonumber 
   \\
   & \Rightarrow  -3 A \lambda \sin(2 \pi\lambda ) + ( B_1 + B_2 + B_3) \lambda \cos(2 \pi\lambda)  = 0
    \Rightarrow A \sin(2 \pi\lambda) = 0.\label{eq:kirchoffthree2}
\end{align}
If $\sin(2 \pi\lambda) \neq 0$, i.e., $\lambda \notin \N^{*}/2$, then we obtain from \eqref{eq:B1B2B3} and \eqref{eq:kirchoffthree2} that $B_1 = B_2 = B_3 = B$ and $A=0$. Then $3B=0$ by \eqref{eq:kds}, so that also $B=0$. This case can then be excluded.

If $\sin(2 \pi\lambda) = 0$ then, all the conditions are fulfilled provided that $A=A_1=A_2=A_3$ and $B_1=-B_2-B_3$. This corresponds to the eigenfunctions introduced in the statement.
\end{proof}

We remark now that the eigenvalues are not simple so there is no  a priori obstruction to the isomodulus approximate controllability between two eigenfunctions corresponding to different energy levels. Another obstruction actually holds, as presented below.

\begin{prop}
    For every $k,l \geq 0$, $k \neq l$, two eigenfunctions corresponding to $\left(\frac{k}{2}\right)^2$ and $\left(\frac{l}{2}\right)^2$ 
    do not share the same modulus.
\end{prop}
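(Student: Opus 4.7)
The plan is to parametrize all eigenfunctions in each eigenspace, expand the squared modulus edge by edge via double-angle identities, and use Fourier independence plus the vertex condition $B_1+B_2+B_3=0$ to reach a parity contradiction.

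\textbf{Step 1 (parametrization).} From the preceding proposition, a complex eigenfunction $\psi_k$ corresponding to $(k/2)^2$ reads, on each edge $e_j$,
\[
\psi_k^j(x)=A\,c_{k/2}(x)+B_j\,s_{k/2}(x),\qquad A,B_1,B_2,B_3\in\C,\quad B_1+B_2+B_3=0,
\]
and similarly $\psi_l^j(x)=A'c_{l/2}(x)+B'_j s_{l/2}(x)$ with $B'_1+B'_2+B'_3=0$. Without loss of generality assume $k<l$, so $k\ge 0$ and $l\ge 1$.

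\textbf{Step 2 (Fourier matching).} Using $\cos^2(kx/2)=(1+\cos(kx))/2$ and analogous identities,
\[
|\psi_k^j(x)|^2=\frac{|A|^2+|B_j|^2}{2}+\frac{|A|^2-|B_j|^2}{2}\cos(kx)+\operatorname{Re}(A\bar B_j)\sin(kx),
\]
(with the last two terms absent when $k=0$), and similarly for $|\psi_l^j|^2$. If $|\psi_k^j|=|\psi_l^j|$ pointwise on $[0,2\pi]$, then the family $\{1,\cos(kx),\sin(kx),\cos(lx),\sin(lx)\}$ is linearly independent in $L^2([0,2\pi])$ for $k\ne l$ with $l\ge 1$, so matching coefficients yields, for every $j\in\{1,2,3\}$,
\[
|A|^2=|B_j|^2,\quad |A'|^2=|B'_j|^2,\quad \operatorname{Re}(A\bar B_j)=0,\quad \operatorname{Re}(A'\bar B'_j)=0,
\]
together with $|A|=|A'|$ from the constant terms.

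\textbf{Step 3 (parity obstruction).} If $A=0$ then all $B_j=0$ and $\psi_k\equiv 0$, a contradiction; so $A\ne 0$. From $|B_j|=|A|$ and $\operatorname{Re}(A\bar B_j)=0$, the ratio $B_j/A$ has modulus $1$ and purely imaginary real part, hence $B_j=\varepsilon_j\, iA$ with $\varepsilon_j\in\{\pm 1\}$. The vertex condition then forces
\[
0=B_1+B_2+B_3=(\varepsilon_1+\varepsilon_2+\varepsilon_3)\,iA,
\]
which is impossible since $\varepsilon_1+\varepsilon_2+\varepsilon_3\in\{\pm 1,\pm 3\}$ and $A\ne 0$.

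\textbf{Main obstacle.} The only subtlety is the bookkeeping for the boundary case $k=0$, where $\phi_0$ is the constant eigenfunction and the left side of the Fourier expansion contains no frequency-$k$ terms; the argument nevertheless goes through unchanged, forcing $|A'|=|B'_j|$ and $\operatorname{Re}(A'\bar B'_j)=0$ and leading to the same odd-sum contradiction. The decisive ingredient is therefore the arithmetical fact that a sum of three signs $\pm 1$ never vanishes — this is precisely where the topology of the three-branch graph enters: with an even number of equal-length edges meeting at the vertex one could balance the signs, and the analogous isomodulus obstruction would disappear.
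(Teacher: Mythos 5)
Your proof is correct. It reaches the same algebraic obstruction as the paper but by a different mechanism: the paper first invokes Lemma~\ref{lem:inab} (applied to two distinct eigenvalues) to conclude that the shared modulus must be constant on each edge, normalizes $A_1=1$, and then evaluates the three edgewise identities at the sample points $x=0$, $\tfrac k2x=\tfrac\pi2$ and $\tfrac k2x=\tfrac\pi4$ to pin down the coefficients and reach a contradiction of the form $|A_2+A_3|=1$ with $A_2,A_3\in\{\pm i\}$. You instead expand $|\psi^j|^2$ edgewise as a trigonometric polynomial in the frequencies $0,k,l$ and use orthogonality of distinct Fourier modes on $[0,2\pi]$ to force $|B_j|=|A|$ and $\mathrm{Re}(A\bar B_j)=0$, hence $B_j=\pm iA$, and then contradict the Kirchhoff relation $B_1+B_2+B_3=0$ by the parity of a sum of three signs. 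The two contradictions are equivalent (in the paper's notation $B_1=-(A_2+A_3)$, $B_2=A_2$, $B_3=A_3$), but your route is self-contained (no appeal to Lemma~\ref{lem:inab}), handles the $k=0$ case explicitly, and isolates where the graph topology enters — the impossibility of balancing an odd number of signs $\pm i$ at the vertex — which is a genuinely illuminating remark. Two cosmetic points: the phrase ``purely imaginary real part'' should read ``vanishing real part'', and in Step~3 you should make explicit that for $k=0$ the parity obstruction is applied to the primed level $l\geq 1$ (as you do in your closing paragraph), since the unprimed constant eigenfunction carries no coefficients $B_j$.
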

\begin{proof}
  Let us proceed by contradiction,  supposing that two distinct energy levels are connectable. Since $k$ and $l$ are different, one of the two, say $k$, is larger than zero. 
  A general eigenfunction  corresponding to the eigenvalue $(k/2)^2$ can be written as 
  $$\phi_k = A_1 \phi_{k,1} + A_{2} \phi_{k,2} + A_{3} \phi_{k,3},$$
  with $A_1,A_2,A_3\in\C$ and $|A_1|^2+|A_2|^2+|A_3|^2\ne0$. 
We know from Lemma~\ref{lem:inab} that the modulus of $\phi_k$ on each edge 
is a constant so we have
\begin{align}
    \left|A_1 \cos\left(\frac{k}{2} x\right) - A_2 \sin\left(\frac{k}{2} x\right) - A_3 \sin\left(\frac{k}{2} x\right)\right|^2 &= C_1,\qquad x\in (0,2\pi),\label{eq:A1}\\
   \left|A_1 \cos\left(\frac{k}{2} x\right) + A_2 \sin\left(\frac{k}{2} x\right) \right|^2 &= C_2,\qquad x\in (0,2\pi),\label{eq:A2}\\
   \left|A_1 \cos\left(\frac{k}{2} x\right) + A_3\sin\left(\frac{k}{2} x\right) \right|^2 &= C_3,\qquad x\in (0,2\pi),\label{eq:A3}
\end{align}
for some $C_1,C_2,C_3\ge 0$. 

If $A_1=0$ then we deduce from \eqref{eq:A2} and \eqref{eq:A3} that also $A_2,A_3=0$. We can then assume without loss of generality that $A_1 = 1$. 
Taking $x=0$, we get that $C_1 = C_2 = C_3 =1$ so that we can rewrite \eqref{eq:A1}--\eqref{eq:A3} as 
 \begin{align*}
   \left| \cos\left(\frac{k}{2} x\right) - A_2 \sin\left(\frac{k}{2} x\right) - A_3 \sin\left(\frac{k}{2} x\right)\right|^2 &= 1,\\
 \left| \cos\left(\frac{k}{2} x\right) + A_2 \sin\left(\frac{k}{2} x\right) \right|^2 &= 1,\\
  \left|  \cos\left(\frac{k}{2} x\right) + A_3 \sin\left(\frac{k}{2} x\right) \right|^2 &= 1.
\end{align*} 

 Taking $x=\frac{\pi}{4k}$ (so that $\frac{k}{2}x=\frac{\pi}2$), we get that 
 $|A_2 + A_3| = |A_2| = |A_3|=1$. 
Take now 
$x=\frac{\pi}{8k}$ (so that $\frac{k}{2}x=\frac{\pi}4$) and deduce that $|1-A_1-A_2|=|1+A_2|=|1+A_3|=1$. 
Since the only $z\in \C$ such that $|z|=1$ and $|1+ z|=2$ is $z=1$, we deduce that 
$-A_2 - A_3 = A_2 = A_3=1$, leading to a contradiction.
\end{proof}

\appendix

\section{Small-time isomodulus approximate controllability}
\label{sec:appendixsmalltimepointphase}

\subsection{Abstract setting}

Let $\mathcal H$ be an infinite-dimensional Hilbert space, $H_0$ be an unbounded self-adjoint operator with domain $\mathrm{Dom}(H_0) \subset \mathcal H$, $H_1, \dots, H_m$ be bounded self-adjoint operators on $\mathcal{H}$. Let us consider the  abstract bilinear control system
\begin{equation}
\label{eq:ScBilinearAbstract}
\left\{
\begin{array}{ll}
i \partial_t \psi(t) = H_0 \psi(t) +  \sum_{j=1}^m u_j(t) H_j \psi(t),& t \in  (0,+\infty),\\
\psi(0) = \psi_0.
\end{array}
\right.
\end{equation}
We have the following well-posedness result according to \cite{BMS82}. For every $T>0$, $\psi_0 \in \mathcal{H}$, and $u=(u_1,\dots, u_m) \in L^2(0,T;\R^d)$, there exists a unique mild solution $\psi \in C([0,T];\mathcal{H})$ of \eqref{eq:ScBilinearAbstract}, i.e.,
\begin{equation*}
    \psi(t) = e^{-it H_0} \psi_0 + \int_0^t e^{- i (t-s) H_0} \left(\sum_{j=1}^m u_j(t) H_j\right) \psi(s) ds,\qquad \forall 
    t \in [0,T].
\end{equation*}
If we assume furthermore that $\psi_0 \in \mathcal{S} = \{\psi \in \mathcal{H}\mid \|\psi\|_{\mathcal{H}} = 1\}$, then  $\psi(t) \in \mathcal{S}$ for every $t \in [0,T]$. In the sequel, for $\psi_0 \in \mathcal{H}$ and $u \in L^2(0,T;\R^d)$, the associated solution $\psi \in C([0,T];\mathcal{H})$ of \eqref{eq:ScBilinearAbstract} will be denoted by $\psi(\cdot; \psi_0, u)$.

The following result is exactly \cite[Theorem 8]{CP22}.
\begin{theorem}\label{thm:abstract}
Let $S$ be a bounded self-adjoint operator satisfying 
\begin{align}
&[S,H_j]=0, \quad j=1,\dots,m,\label{eq:commutationcontrol}\\
&S\mathrm{Dom}(H_0)\subset \mathrm{Dom}(H_0),\quad {\rm ad}^3_S(H_0)\mathrm{Dom}(H_0)=0.\label{eq:liecrochet}
\end{align}
Then, for each $\psi_0\in\mathcal{H}$ and $u = (u_1, \dots, u_m) \in \R^m$, the following limit holds in $\mathcal{H}$
\begin{align}\label{eq:limitabstract}
\lim_{\delta \to 0}&e^{-i\delta^{-1/2}S}\exp\left(-i\delta\left(H_0+\sum_{j=1}^m\frac{u_j}{\delta}H_j\right)\right)e^{i\delta^{-1/2}S}\psi_0\nonumber \\
=&\exp\left(\frac{i}{2} {\rm ad}^2_S(H_0)-i\sum_{j=1}^m u_jH_j\right)\psi_0,
\end{align}
where ${\rm ad}^0_AB=B$, ${\rm ad}_AB=[A,B]=AB-BA$, and ${\rm ad}^{n+1}_AB=[A,{\rm ad}^{n}_AB]$. 
\end{theorem}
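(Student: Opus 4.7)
The plan is to combine the conjugation invariance of the exponential of a self-adjoint generator with the truncation of the Hadamard expansion provided by the nilpotency hypothesis \eqref{eq:liecrochet}, and then to pass to the limit via a Trotter--Kato type argument.

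First, since $S$ is bounded and self-adjoint with $S\,\mathrm{Dom}(H_0)\subset\mathrm{Dom}(H_0)$, the unitary $e^{i\delta^{-1/2}S}$ preserves $\mathrm{Dom}(H_0)$, so the standard conjugation identity for an exponential of a self-adjoint generator yields
\[
e^{-i\delta^{-1/2}S}\exp\!\left(-i\delta\!\left(H_0+\sum_{j=1}^m\frac{u_j}{\delta}H_j\right)\right)e^{i\delta^{-1/2}S}=\exp(-i\delta K_S(\delta)),
\]
where $K_S(\delta):=e^{-i\delta^{-1/2}S}\bigl(H_0+\sum_j(u_j/\delta)H_j\bigr)e^{i\delta^{-1/2}S}$. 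Using \eqref{eq:commutationcontrol}, the controlled part is invariant under conjugation, and the Hadamard lemma together with \eqref{eq:liecrochet} truncates the series to
\[
e^{-i\delta^{-1/2}S}H_0\, e^{i\delta^{-1/2}S}=H_0-i\delta^{-1/2}\mathrm{ad}_S(H_0)-\tfrac{1}{2}\delta^{-1}\mathrm{ad}_S^2(H_0) \quad\text{on }\mathrm{Dom}(H_0).
\]
Multiplying by $-i\delta$ and regrouping terms then gives
\[
-i\delta K_S(\delta)=-i\delta H_0-\delta^{1/2}\mathrm{ad}_S(H_0)+\tfrac{i}{2}\mathrm{ad}_S^2(H_0)-i\sum_{j=1}^m u_j H_j \quad\text{on }\mathrm{Dom}(H_0),
\]
so that the two a priori singular terms in $\delta$ have formally disappeared.

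The second step is to let $\delta\to 0$. For every fixed $\psi\in\mathrm{Dom}(H_0)$, the right-hand side above converges in $\mathcal{H}$ to $A\psi$, where $A:=\tfrac{i}{2}\mathrm{ad}_S^2(H_0)-i\sum_j u_j H_j$, because $\delta H_0\psi\to 0$ and $\delta^{1/2}\mathrm{ad}_S(H_0)\psi\to 0$ in norm as $\delta\to 0$. Each $-i\delta K_S(\delta)$ is skew-adjoint (it generates a unitary group on $\mathcal{H}$, being obtained by unitary conjugation from the skew-adjoint generator of the original propagator), and $A$ is itself skew-adjoint on $\mathrm{Dom}(H_0)$, since $\mathrm{ad}_S^2(H_0)$ is symmetric on $\mathrm{Dom}(H_0)$ by the standing assumptions and $\sum_j u_j H_j$ is bounded self-adjoint. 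Since $\mathrm{Dom}(H_0)$ is a common core on which strong convergence of the generators holds, the Trotter--Kato theorem yields the strong convergence $\exp(-i\delta K_S(\delta))\psi_0\to e^A\psi_0$, which is precisely \eqref{eq:limitabstract}.

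The main technical obstacle is the careful bookkeeping around the unbounded operator $H_0$. One must verify: (i) that the conjugation identity in the first step is a genuine identity of unitary operators (not merely formal), (ii) that the Hadamard expansion truncates as an operator identity on $\mathrm{Dom}(H_0)$ rather than as an asymptotic series, and (iii) that $A$ is essentially skew-adjoint so that Trotter--Kato applies. Points (i) and (ii) hinge on the invariance $S\,\mathrm{Dom}(H_0)\subset\mathrm{Dom}(H_0)$, which ensures that all the intermediate commutators $\mathrm{ad}_S^n(H_0)$ are defined on the common dense subspace $\mathrm{Dom}(H_0)$, while $\mathrm{ad}_S^3(H_0)\mathrm{Dom}(H_0)=0$ reduces the Hadamard series to a finite sum of well-defined terms. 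Point (iii) uses additionally the boundedness of the perturbation $-i\sum_j u_j H_j$, via a Kato--Rellich type argument against the symmetric operator $\tfrac{1}{2}\mathrm{ad}_S^2(H_0)$.
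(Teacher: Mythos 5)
The paper does not actually prove this theorem: it is quoted verbatim from \cite[Theorem 8]{CP22} and used as a black box, so there is no internal proof to compare against. That said, your argument --- exact unitary conjugation of the propagator, truncation of the Hadamard expansion at order two thanks to ${\rm ad}^3_S(H_0)\mathrm{Dom}(H_0)=0$, cancellation of the singular powers of $\delta$ after multiplying the conjugated generator by $-i\delta$, and passage to the limit by strong convergence of the generators on the common core $\mathrm{Dom}(H_0)$ via a Trotter--Kato theorem --- is precisely the standard route, and it is the one followed in \cite{CP22}; your algebra (signs and powers of $\delta$) checks out. Two points need more care than you give them. First, to identify the conjugated generator with the truncated Hadamard series \emph{on} $\mathrm{Dom}(H_0)$ you need the unitary $e^{i\delta^{-1/2}S}$ itself, not merely $S$, to preserve $\mathrm{Dom}(H_0)$; this does follow from $S\,\mathrm{Dom}(H_0)\subset\mathrm{Dom}(H_0)$ (by the closed graph theorem $S$ is bounded for the graph norm of $H_0$, so the exponential series converges in that norm), but it is a step to be justified rather than a restatement of the hypothesis. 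Second, your point (iii) is circular as written: Kato--Rellich perturbs a \emph{self-adjoint} operator by a bounded symmetric one, so your argument reduces the skew-adjointness of $A=\tfrac{i}{2}{\rm ad}^2_S(H_0)-i\sum_j u_jH_j$ to the essential self-adjointness of $\tfrac12{\rm ad}^2_S(H_0)$ on $\mathrm{Dom}(H_0)$, and the latter is not a consequence of \eqref{eq:commutationcontrol}--\eqref{eq:liecrochet} alone in the abstract setting. This extra property is implicitly needed even to make sense of the right-hand side of \eqref{eq:limitabstract}, and it is harmless in every application in this paper, where ${\rm ad}^2_S(H_0)$ is a bounded multiplication operator (see \eqref{eq:computationcrochet} and \eqref{eq:computationcrochetGraph}); but a self-contained abstract proof should either state it as a hypothesis or explain why it holds.
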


\subsection{Applications to manifolds with boundary}

We start from the setting of \Cref{sec:bilinearquantumsystemsmanifolds}.
Consider the following asymptotic result.

\begin{theorem}
\label{thm:limitDirichletNeumann}
For every $\psi_0\in L^2(M)$, $(u_1,\dots,u_m)\in\mathbb{R}^m$,  and $\varphi\in C^\infty(M;\mathbb{R})$ such that $\varphi \text{Dom}(H_0) \subset\text{Dom}(H_0)$, the following limit holds in $L^2(M)$
\begin{align}
\notag
\lim_{\delta \to 0}e^{-i\delta^{-1/2}\varphi}\exp&\left(-i\delta\left(-\Delta_g+V+\sum_{j=1}^m\frac{u_j}{\delta}Q_j\right)\right)e^{i\delta^{-1/2}\varphi}\psi_0\\
=&\exp\left(-i g(\nabla_g \varphi,\nabla_g \varphi)-i \sum_{j=1}^mu_jQ_j\right)\psi_0.\label{eq:limitdirichlet}
\end{align}
\end{theorem}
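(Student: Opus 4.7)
The plan is to apply the abstract Theorem~\ref{thm:abstract} with $S$ equal to the operator of multiplication by $\varphi$, $H_0 = -\Delta_g + V$, and $H_j = Q_j$ for $j=1,\dots,m$ (also acting by multiplication). Since $\varphi$ is real-valued and (for $M$ compact, or implicitly bounded) gives a bounded self-adjoint operator, and the $Q_j$ commute with $\varphi$ pointwise, hypothesis \eqref{eq:commutationcontrol} is immediate. The stabilization $\varphi\,\mathrm{Dom}(H_0)\subset \mathrm{Dom}(H_0)$ is assumed, so the first part of \eqref{eq:liecrochet} holds.

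The main algebraic step is to compute the iterated commutators $\mathrm{ad}_\varphi^n(H_0)$ on smooth enough $f$. Using the product rule for $\Delta_g$,
\begin{equation*}
\Delta_g(\varphi f)=\varphi\,\Delta_g f+2\,g(\nabla_g\varphi,\nabla_g f)+f\,\Delta_g\varphi,
\end{equation*}
one finds
\begin{equation*}
[\varphi,H_0]f=[\varphi,-\Delta_g]f=2\,g(\nabla_g\varphi,\nabla_g f)+f\,\Delta_g\varphi,
\end{equation*}
which is a first-order differential operator. Commuting once more with $\varphi$ kills the first-order term, leaving
\begin{equation*}
\mathrm{ad}_\varphi^2(H_0)f=[\varphi,[\varphi,-\Delta_g]]f=-2\,g(\nabla_g\varphi,\nabla_g\varphi)\,f,
\end{equation*}
so $\mathrm{ad}_\varphi^2(H_0)$ is the bounded multiplication operator by $-2\,g(\nabla_g\varphi,\nabla_g\varphi)\in C^\infty(M;\mathbb{R})$. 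Since $\varphi$ and this function both act by multiplication, they commute, hence $\mathrm{ad}_\varphi^3(H_0)=0$ on $\mathrm{Dom}(H_0)$, giving the second half of \eqref{eq:liecrochet}.

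Plugging these identifications into the conclusion \eqref{eq:limitabstract} of Theorem~\ref{thm:abstract} gives
\begin{equation*}
\exp\!\left(\tfrac{i}{2}\,\mathrm{ad}_\varphi^2(H_0)-i\sum_{j=1}^m u_jQ_j\right)\psi_0
=\exp\!\left(-i\,g(\nabla_g\varphi,\nabla_g\varphi)-i\sum_{j=1}^m u_jQ_j\right)\psi_0,
\end{equation*}
which is exactly the right-hand side of \eqref{eq:limitdirichlet}. The main obstacle is really the hypothesis-checking, in particular ensuring that the commutator identities computed formally on smooth functions pass to the full operator level: one needs $\varphi$ to stabilize $\mathrm{Dom}(H_0)$ so that $[\varphi,H_0]$ and $\mathrm{ad}_\varphi^2(H_0)$ are well-defined on $\mathrm{Dom}(H_0)$ (and hence comparable to the operator $\mathrm{ad}_\varphi^2(H_0)$ appearing in the abstract statement). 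This is precisely the content of the assumption $\varphi\,\mathrm{Dom}(H_0)\subset\mathrm{Dom}(H_0)$: under homogeneous Dirichlet or Neumann--Kirchhoff conditions, one verifies by direct inspection that $\varphi f$ retains the boundary condition of $f$ whenever $\varphi$ is admissible (for Dirichlet conditions this is automatic for any $\varphi\in C^\infty(M;\mathbb{R})$, while for Neumann conditions it requires $\partial_\nu\varphi=0$ on the relevant components). Once this is in place, all hypotheses of Theorem~\ref{thm:abstract} hold and the result follows.
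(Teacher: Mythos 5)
Your proposal is correct and follows exactly the paper's route: the paper also proves Theorem~\ref{thm:limitDirichletNeumann} as a direct application of Theorem~\ref{thm:abstract} with $S$ the multiplication operator by $\varphi$, $H_j=Q_j$, and the identity $\tfrac12\,\mathrm{ad}^2_S(H_0)=-g(\nabla_g\varphi,\nabla_g\varphi)$. Your write-up merely makes explicit the commutator computation and the role of the stabilization hypothesis $\varphi\,\mathrm{Dom}(H_0)\subset\mathrm{Dom}(H_0)$, which the paper states without detail.
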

This is an adapted version of \cite[Theorem 2]{CP22} to the case of a manifold with boundary. The result is a direct application of \Cref{thm:abstract}. Indeed, let us take $\mathcal H = L^2(M)$, $H_0 = - \Delta_g + V$, $H_j = Q_j$ for every $j\in\{1,\dots,m\}$, and let $S$ be the bounded multiplication operator by $\varphi$ on $\mathcal H$. Then \eqref{eq:commutationcontrol} and \eqref{eq:liecrochet} are fulfilled. Moreover, we have
\begin{equation}
    \label{eq:computationcrochet}
    \frac{1}{2} {\rm ad}^2_S(H_0) = - g(\nabla_g \varphi,\nabla_g \varphi).
\end{equation}

For $Q_1, \dots, Q_m \in C^{\infty}(M;\R)$, we recall the notation \eqref{eq:defH0subspace}, \eqref{eq:defHNsubspacerecurrence}, and \eqref{eq:defHinfty}. We have the following result.  
\begin{theorem}
\label{thm:manifoldirichletneumann}
For every $\psi_0 \in L^2(M)$, we have
\begin{equation*}
    \{e^{i\phi}\psi_0\mid\phi\in\mathcal{H}_\infty\}\subset \overline{\mathcal R_{0}(\psi_0)}.
\end{equation*}
\end{theorem}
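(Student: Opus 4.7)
The plan is to prove by induction on $N\ge 0$ that, for every $\psi_0\in L^2(M)$ and every $\varphi\in\mathcal{H}_N$, one has $e^{i\varphi}\psi_0\in\overline{\mathcal R_0(\psi_0)}$; taking the union over $N$ then yields the claim. For the base case $N=0$, let $\varphi=\sum_{j=1}^m\alpha_jQ_j\in\mathcal{H}_0$ and apply the constant control $u_j\equiv\alpha_j/\tau$ on $[0,\tau]$. The mild solution equals $e^{-i(\tau H_0+\varphi)}\psi_0$; since $\varphi$ is bounded and $\tau H_0$ tends to $0$ in the strong resolvent sense (equivalently, by the continuity of the endpoint with respect to the control in the $L^1$ norm provided by the BMS well-posedness theory), $\psi(\tau)\to e^{-i\varphi}\psi_0$ in $L^2(M)$ as $\tau\to 0^+$. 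Reversing the signs of the $\alpha_j$'s likewise yields $e^{i\varphi}\psi_0$.

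For the inductive step, assume the statement holds for $\mathcal{H}_N$ and every initial state. Let $\varphi\in\mathcal{H}_{N+1}$ and, via \eqref{eq:defHNsubspacerecurrence}, decompose $\varphi=\varphi_0-\alpha g(\nabla_g\psi,\nabla_g\psi)$ with $\varphi_0\in\mathcal{H}_N$, $\alpha\ge 0$, and $\pm\psi\in\mathcal{H}_N$. Since $\varphi_0$ and $g(\nabla_g\psi,\nabla_g\psi)$ are real multiplication operators, the associated phases commute, so $e^{i\varphi}\psi_0=e^{-i\alpha g(\nabla_g\psi,\nabla_g\psi)}\eta$ with $\eta:=e^{i\varphi_0}\psi_0$. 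The inductive hypothesis gives $\eta\in\overline{\mathcal R_0(\psi_0)}$, hence, by the transitivity of $\overline{\mathcal R_0}$---which follows from the continuous dependence of the endpoint on the initial state together with concatenation of controls---it suffices to show that $e^{-i\alpha g(\nabla_g\psi,\nabla_g\psi)}\eta\in\overline{\mathcal R_0(\eta)}$. For this, invoke Theorem~\ref{thm:limitDirichletNeumann} with $\sqrt{\alpha}\,\psi$ in place of $\varphi$ and $u_j\equiv 0$, obtaining
\begin{equation*}
e^{-i\delta^{-1/2}\sqrt{\alpha}\psi}\,e^{-i\delta H_0}\,e^{i\delta^{-1/2}\sqrt{\alpha}\psi}\,\eta\;\longrightarrow\;e^{-i\alpha g(\nabla_g\psi,\nabla_g\psi)}\eta\quad\text{in }L^2(M)\text{ as }\delta\to 0^+.
\end{equation*}
Pick $\delta\in(0,\tau/3]$ small enough that this quantity is within $\varepsilon/3$ of its limit, and realize the left-hand side as the concatenation of three small-time controlled trajectories: first, starting from $\eta$, use the inductive hypothesis to approximate $e^{i\delta^{-1/2}\sqrt{\alpha}\psi}\eta$ within $\varepsilon/3$ in time at most $\tau/3$; next, apply zero control for time $\delta$, reproducing $e^{-i\delta H_0}$; finally, apply the inductive hypothesis again to approximate the remaining phase $e^{-i\delta^{-1/2}\sqrt{\alpha}\psi}$ within $\varepsilon/3$ in time at most $\tau/3$. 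Both inductive calls are legitimate because $\mathcal{H}_N$ is a convex cone closed under positive scaling (a straightforward induction relying on the linearity of the domain-stability condition $\varphi\,\mathrm{Dom}(H_0)\subset\mathrm{Dom}(H_0)$), which together with $\pm\psi\in\mathcal{H}_N$ places $\pm\delta^{-1/2}\sqrt{\alpha}\psi$ in $\mathcal{H}_N$.

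The main technical obstacle is the careful chaining of the three approximations: because the second and third trajectories actually start from states only approximately equal to their nominal initial values, the errors must be shown to accumulate only additively. This relies on the isometric nature of multiplication by $e^{i\chi}$ on $L^2(M)$ and on the Lipschitz dependence of the flow of \eqref{eq:ScBilinear} on the initial condition over bounded time intervals and bounded $L^2$ norms of controls, both standard consequences of the BMS well-posedness theory.
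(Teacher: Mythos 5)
Your proof is correct and follows essentially the same route as the paper's: induction on $N$, with the base case given by the $\varphi=0$ instance of Theorem~\ref{thm:limitDirichletNeumann} (equivalently, a large constant control over a vanishing time), and the inductive step realized by the conjugated limit of Theorem~\ref{thm:limitDirichletNeumann} implemented as a concatenation of three controlled trajectories whose errors add thanks to the unitarity of the flow. The only differences are organizational --- you chain the approximations through an explicit transitivity property of $\overline{\mathcal{R}_0}$ and treat a single saturation term $-\alpha\, g(\nabla_g\psi,\nabla_g\psi)$, whereas the paper writes out the triangle inequality directly and iterates over a finite sum of such terms --- and these do not change the substance.
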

The proof of the theorem, included for completeness, is an adaptation of the ones of \cite[Theorem~3]{CP22} and \cite[Theorem 1]{DP23}, and works because, for every $N \geq 0$ and every  $\varphi \in \mathcal{H}_N$, $\varphi \text{Dom}(H_0) \subset\text{Dom}(H_0)$,  so the arguments of the original proof still make sense. Notice that this last property is included in the definitions \eqref{eq:defH0subspace}, \eqref{eq:defHNsubspacerecurrence} while this inclusion was automatically satisfied in the specific setting chosen for \cite[Theorem~3]{CP22}. Finally, note that \Cref{thm:manifoldirichletneumann} directly implies \Cref{th:smalltimepointphase}.

\begin{proof}[Proof of Theorem \ref{thm:manifoldirichletneumann}]
Let us introduce the following notation for the concatenation of two controls $u$ and $v$: given $\tau\ge 0$ such that $u$ is defined at least up to time $\tau$,  $u \diamond_{\tau}  v$ denotes the function
\begin{equation}
\label{eq:defconcatenation}
(u \diamond_{\tau} v) (t)= \begin{cases}
 u(t)&\quad \hbox{for} \ \ t\in[0,\tau),\\
 v(t-\tau)&\quad \hbox{for}\ \ t\geqslant\tau .
 \end{cases}
 \end{equation}
 
It is sufficient to prove by recurrence on $N\ge 0$ that for every 
$\psi_0\in L^2(M)$,
\begin{equation}
\label{eq:saturatedspacein}
\{e^{i\phi}\psi_0\mid \phi\in\mathcal{H}_N\}\subset  \overline{\mathcal R_{0}(\psi_0)}.
\end{equation}

\textit{Initialization:} The case $N=0$ is a straightforward consequence of \Cref{thm:limitDirichletNeumann} by taking $\varphi=0$. Indeed, we have that for every $(u_1,\dots,u_m)\in\mathbb{R}^m$
\begin{align}
\label{eq:limitdirichletapplication}
\lim_{\delta \to 0}\exp\left(-i\delta\left(-\Delta_g+V+\sum_{j=1}^m\frac{u_j}{\delta}Q_j\right)\right)\psi_0=\exp\left(-i \sum_{j=1}^mu_jQ_j\right)\psi_0\quad  \text{in}\ L^2(M).
\end{align}

\textit{Recurrence step:} Assume that for some $N \in \N$, we have \eqref{eq:saturatedspacein} for every $\psi_0\in L^2(M)$. Let us take  $\varepsilon>0$ and $\phi \in \mathcal{H}_{N+1}$. Then there exist $\phi_0, \dots, \phi_k \in \mathcal{H}_N$ and $\alpha_1, \dots, \alpha_k \in [0,+\infty)$ such that $-\phi_1,\dots,-\phi_k\in {\cal H}_N$ and
\begin{equation*}
 \phi = \phi_0 - \sum_{j=1}^k \alpha_j g (\nabla \phi_j, \nabla \phi_j).
\end{equation*}

Note that $\phi_1 \in \mathcal H_N$ stabilizes $\text{Dom}(H_0)$.
By applying \Cref{thm:limitDirichletNeumann} with $u=0$, we obtain that there exists $\gamma \in (0,T/3)$ such that 
\begin{equation*}
    \norme{e^{-i\gamma^{-1/2}\alpha_1^{1/2}\phi_1}e^{-i\gamma\left(-\Delta_g+V\right)}e^{i\gamma^{-1/2}\alpha_1^{1/2}\phi_1}e^{i \phi_0} \psi_0 - e^{ - i \alpha_1 g(\nabla \phi_1, \nabla \phi_1)} e^{i \phi_0} \psi_0}_{L^2(M)} < \varepsilon.
\end{equation*}

Then, by the inductive hypothesis, there exist $\tau_1 \in (0,T/3)$ and a control $u_1 \in L^2(0, \tau_1;\R^m)$ such that
\begin{equation}
\label{eq:firstapproximation}
    \norme{\psi(\tau_1; \psi_0, u_1) - e^{i\gamma^{-1/2}\alpha_1^{1/2}\phi_1}e^{i \phi_0} \psi_0}_{L^2(M)} < \varepsilon.
\end{equation}
By using that the semigroup $(e^{-it(-\Delta_g+V)})_{t \geq 0}$ is unitary and \eqref{eq:firstapproximation}, we obtain that for $\tau_2=\gamma$ and $u_2 = 0$ in $(0,\tau_2)$,
\begin{align}
   & \norme{\psi(\tau_2+\tau_1; \psi_0, u_1 \diamond_{\tau_1} u_2) - e^{-i\gamma\left(-\Delta_g+V\right)}e^{i\gamma^{-1/2}\alpha_1^{1/2}\phi_1}e^{i \phi_0} \psi_0}_{L^2(M)}\notag\\
   & =   \norme{e^{-i\gamma(-\Delta_g+V)} \psi(\tau_1; \psi_0, u_1) - e^{-i\gamma(-\Delta_g+V)} e^{i\gamma^{-1/2}\alpha_1^{1/2}\phi_1}e^{i \phi_0} \psi_0}_{L^2(M)}
   < \varepsilon. \label{eq:secondapproximation}
\end{align}
By using again the inductive hypothesis to the data $e^{-i\gamma\left(-\Delta_g+V\right)}e^{i\gamma^{-1/2}\alpha_1^{1/2}\phi_1}e^{i \phi_0} \psi_0$, we obtain that there exist a time $\tau_3 \in (0,T/3)$ and a control $u_3 \in L^2(0,\tau_3;\R^m)$ such that
\begin{multline}
\label{eq:thirdapproximation}
    \|\psi(\tau_3; e^{-i\gamma\left(-\Delta_g+V\right)}e^{i\gamma^{-1/2}\alpha_1^{1/2}\phi_1}e^{i \phi_0} \psi_0, u_3) \\
    - e^{-i\gamma^{-1/2}\alpha_1^{1/2}\phi_1}e^{-i\gamma\left(-\Delta_g+V\right)}e^{i\gamma^{-1/2}\alpha_1^{1/2}\phi_1}e^{i \phi_0} \psi_0 \|_{L^2(M)}
     < \varepsilon.
\end{multline}
We now set $\tau=\tau_1+\tau_2+\tau_3$ and $u = u_1 \diamond_{\tau_1} u_2 \diamond_{\tau_2} u_3 \in L^2(0,\tau;\R^m)$ and we have from \eqref{eq:secondapproximation} and \eqref{eq:thirdapproximation},
\begin{align*}
   & \norme{\psi(\tau; \psi_0,u) - e^{- i \alpha_1 g(\nabla \phi_1, \nabla \phi_1)} e^{i \phi_0} \psi_0}_{L^2(M)} \\
& \leq \norme{\psi(\tau_3;\psi(\tau_2+\tau_1; \psi_0, u_1 \diamond_{\tau_1} u_2), u_3) - \psi(\tau_3;e^{-i\gamma\left(-\Delta_g+V\right)}e^{i\gamma^{-1/2}\alpha_1^{1/2}\phi_1}e^{i \phi_0} \psi_0, u_3)  }_{L^2(M)}\\
& + \norme{ \psi(\tau_3;e^{-i\gamma\left(-\Delta_g+V\right)}e^{i\gamma^{-1/2}\alpha_1^{1/2}\phi_1}e^{i \phi_0} \psi_0, u_3) - e^{-i\gamma^{-1/2}\alpha_1^{1/2}\phi_1}e^{-i\gamma\left(-\Delta_g+V\right)}e^{i\gamma^{-1/2}\alpha_1^{1/2}\phi_1}e^{i \phi_0} \psi_0  }_{L^2(M)}\\
& + \norme{  e^{-i\gamma^{-1/2}\alpha_1^{1/2}\phi_1}e^{-i\gamma\left(-\Delta_g+V\right)}e^{i\gamma^{-1/2}\alpha_1^{1/2}\phi_1}e^{i \phi_0} \psi_0 -e^{ - i \alpha_1 g(\nabla \phi_1, \nabla \phi_1)} e^{i \phi_0} \psi_0  }_{L^2(M)}\\
& < 3 \varepsilon.
\end{align*}
By arbitrariness of $\varepsilon>0$, this proves that 
\begin{equation}
\label{eq:firstlimitHnafter}
    e^{- i \alpha_1 g(\nabla \phi_1, \nabla \phi_1)} e^{i \phi_0} \psi_0 \in \overline{\mathcal{R}_0(\psi_0)}.
\end{equation}
 We then iterate the argument by considering the limit \eqref{eq:limitdirichlet} with $\varphi = - \alpha_2^{1/2} \phi_2$ starting from the initial data $e^{i \alpha_1 g(\nabla \phi_1, \nabla \phi_1)} e^{i \phi_0} \psi_0$. Again, note that $\phi_2 \in \mathcal H_N$ stabilizes $\text{Dom}(H_0)$, therefore this leads to the possibility of  applying of \Cref{thm:limitDirichletNeumann}. 

Therefore, thanks to \eqref{eq:firstlimitHnafter}, one obtains 
\begin{equation*}
    e^{i(\phi_0 -
    \sum_{j=1}^k \alpha_j g (\nabla \phi_j, \nabla \phi_j))} \in \overline{\mathcal{R}_0(\psi_0)},
\end{equation*}
and the proof is complete. 
\end{proof}

\subsection{Application to quantum graphs}
\label{sec:applicationquantumgraphs}

We adopt here the setting of \Cref{sec:bilinearcontrolquantum}.
Consider the following asymptotic result.

\begin{theorem}
\label{thm:limitQuantum}
For every $\psi_0\in L^2(\mathcal G)$, $(u_1,\dots,u_m)\in\mathbb{R}^m$,  and $\varphi\in C^0(\mathcal G;\mathbb{R})$ such that the restriction of $\varphi$ to each edge is $C^{\infty}$ and $\varphi \text{Dom}(H_0) \subset\text{Dom}(H_0)$, the following limit holds in $L^2(\mathcal G)$
\begin{align}
\notag
\lim_{\delta \to 0}e^{-i\delta^{-1/2}\varphi}\exp&\left(-i\delta\left(-\partial_{x}^2+V+\sum_{j=1}^m\frac{u_j}{\delta}Q_j\right)\right)e^{i\delta^{-1/2}\varphi}\psi_0\\
=&\exp\left(-i (\partial_{x} \varphi)^2-i \sum_{j=1}^mu_jQ_j\right)\psi_0.\label{eq:limitgraph}
\end{align}
\end{theorem}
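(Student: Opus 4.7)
The plan is to obtain \Cref{thm:limitQuantum} as a direct application of the abstract \Cref{thm:abstract}, exactly in the spirit of the proof of \Cref{thm:limitDirichletNeumann}. Set $\mathcal H = L^2(\mathcal G)$, let $H_0 = -\partial_{x}^2 + V$ equipped with its domain $\mathrm{Dom}(H_0)$ as defined in \Cref{sec:bilinearcontrolquantum}, take $H_j = Q_j$ for $j=1,\dots,m$ (bounded self-adjoint multiplication operators by Assumption~\ref{ass:qi}), and let $S$ denote the multiplication operator by $\varphi$. Since $\mathcal G$ is compact and $\varphi \in C^0(\mathcal G;\R)$, $S$ is bounded, and it is self-adjoint because $\varphi$ is real-valued.

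The hypotheses of \Cref{thm:abstract} are then verified as follows. The commutation \eqref{eq:commutationcontrol} is immediate since $S$ and $H_j$ are both multiplication operators. The inclusion $S\,\mathrm{Dom}(H_0)\subset \mathrm{Dom}(H_0)$ in \eqref{eq:liecrochet} is precisely part of the hypotheses of the theorem. It remains to verify that ${\rm ad}_S^3(H_0)\,\mathrm{Dom}(H_0) = 0$. I would do this by a direct edgewise computation: since $\varphi$ is $C^\infty$ up to each endpoint on each edge $e_l$ and $f\in \mathrm{Dom}(H_0)\subset \tilde H^2(\mathcal G)$, on each edge we obtain
\begin{equation*}
{\rm ad}_S(H_0) f = [\varphi,-\partial_x^2]f = \partial_x^2\varphi \cdot f + 2\,\partial_x\varphi\cdot \partial_x f,
\end{equation*}
which lies in $L^2(\mathcal G)$. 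A second commutation gives ${\rm ad}^2_S(H_0) f = -2(\partial_x\varphi)^2 f$ edgewise, so that ${\rm ad}^2_S(H_0)$ is the bounded multiplication operator by $-2(\partial_x\varphi)^2$ (bounded since $\varphi$ is smooth on each edge and $\mathcal G$ is compact). Finally, ${\rm ad}^3_S(H_0) = [S,-2(\partial_x\varphi)^2]=0$ as the commutator of two multiplication operators.

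Plugging the identity $\tfrac12{\rm ad}^2_S(H_0) = -(\partial_x\varphi)^2$ (the graph analogue of \eqref{eq:computationcrochet}) into the limit \eqref{eq:limitabstract} of \Cref{thm:abstract} yields exactly \eqref{eq:limitgraph}. The only delicate point is that $\partial_x\varphi$ is a priori only an edgewise notion (it may jump at the vertices, since $\varphi$ is merely continuous there), but this is harmless: the commutator computations are carried out edgewise on the $\tilde H^2(\mathcal G)$-part of the domain, and the vertex conditions that define $\mathrm{Dom}(H_0)$ are preserved throughout thanks to the standing assumption $\varphi\,\mathrm{Dom}(H_0)\subset \mathrm{Dom}(H_0)$. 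This is the main subtlety compared to the manifold case of \Cref{thm:limitDirichletNeumann}, and it is exactly the reason why the stabilization of $\mathrm{Dom}(H_0)$ by $\varphi$ has been kept in the hypothesis here (and in the definitions \eqref{eq:defH0subspacegraph}--\eqref{eq:defHNgraph}), as opposed to being automatic.
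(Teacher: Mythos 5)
Your proposal is correct and follows exactly the paper's own route: both apply the abstract Theorem~\ref{thm:abstract} with $S$ the bounded multiplication operator by $\varphi$, check \eqref{eq:commutationcontrol}--\eqref{eq:liecrochet}, and use the identity $\tfrac12{\rm ad}^2_S(H_0)=-(\partial_x\varphi)^2$ to read off \eqref{eq:limitgraph}. Your edgewise commutator computation and the remark on why the stabilization hypothesis $\varphi\,\mathrm{Dom}(H_0)\subset\mathrm{Dom}(H_0)$ is retained are welcome elaborations of details the paper leaves implicit.
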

This is an adapted version of \cite[Theorem 2]{CP22} to the case of a quantum graph. The result is a direct application of \Cref{thm:abstract}. Indeed, let us take $\mathcal H = L^2(\mathcal G)$, $H_0 = - \partial_{x}^2 + V$, $H_j = Q_j$ for every $j\in\{1,\dots,m\}$, and let $S$ be the bounded multiplication operator by $\varphi$ on $\mathcal H$. Then \eqref{eq:commutationcontrol} and \eqref{eq:liecrochet} are fulfilled. Moreover, we have
\begin{equation}
    \label{eq:computationcrochetGraph}
    \frac{1}{2} {\rm ad}^2_S(H_0) = - (\partial_x \varphi)^2.
\end{equation}

Assume that $Q_1, \dots, Q_m \in C^0(\mathcal G;\R)$ and the restrictions of $Q_1, \dots, Q_m$ to each edge of $\cal G$ are  $C^{\infty}$ up to the boundary. Recalling the notations \eqref{eq:defH0subspacegraph}, \eqref{eq:defHNgraph}, and \eqref{eq:defHinftygraph}, we have the following result.  
\begin{theorem}
\label{thm:graph}
For every $\psi_0 \in L^2(\mathcal G)$, we have
\begin{equation*}
    \{e^{i\phi}\psi_0\mid\phi\in\mathcal{H}_\infty\}\subset \overline{\mathcal R_{0}(\psi_0)}.
\end{equation*}
\end{theorem}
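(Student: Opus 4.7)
The plan is to mirror the proof of \Cref{thm:manifoldirichletneumann} almost verbatim, with two adjustments specific to the graph setting: the asymptotic formula from \Cref{thm:limitQuantum} replaces the one from \Cref{thm:limitDirichletNeumann} (so that the quadratic ``kinetic'' term produced in the limit is $(\partial_x\varphi)^2$ rather than $g(\nabla_g\varphi,\nabla_g\varphi)$), and the stabilization condition $\varphi\,\mathrm{Dom}(H_0)\subset\mathrm{Dom}(H_0)$ must be checked directly from the definitions \eqref{eq:defH0subspacegraph}--\eqref{eq:defHinftygraph}, which is how these cones were designed. It suffices to prove by induction on $N\ge 0$ that
\[
\{e^{i\phi}\psi_0 \mid \phi\in\mathcal H_N\}\subset \overline{\mathcal R_0(\psi_0)},\qquad\forall \psi_0\in L^2(\mathcal G).
\]

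For the base case $N=0$, I apply \Cref{thm:limitQuantum} with $\varphi=0$; the limit then reduces to $\exp(-i\sum_j u_j Q_j)\psi_0$, so any $\phi\in\mathrm{span}\{Q_1,\dots,Q_m\}$ stabilizing $\mathrm{Dom}(H_0)$ (i.e.\ any $\phi\in\mathcal H_0$) can be realized as an arbitrarily close target with a piecewise constant control of arbitrarily small total time. For the inductive step, I take $\phi\in\mathcal H_{N+1}$ and write
\[
\phi = \phi_0 - \sum_{j=1}^k \alpha_j (\partial_x\phi_j)^2,
\]
with $\pm\phi_j\in\mathcal H_N$ and $\alpha_j\ge 0$, where by definition of $\mathcal H_{N+1}$ each $\phi_j$ stabilizes $\mathrm{Dom}(H_0)$. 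I then reproduce the three-block concatenation used for \Cref{thm:manifoldirichletneumann}: start from $e^{i\phi_0}\psi_0$ (reachable by the inductive hypothesis), apply \Cref{thm:limitQuantum} with $\varphi=\alpha_1^{1/2}\phi_1$ to implement $\exp(-i\alpha_1(\partial_x\phi_1)^2)$ up to an $\varepsilon$-error, where the two conjugating multiplication operators $e^{\pm i\delta^{-1/2}\alpha_1^{1/2}\phi_1}$ are in turn approximated by genuine controlled evolutions thanks to the inductive hypothesis applied to the states $e^{i\gamma^{-1/2}\alpha_1^{1/2}\phi_1}e^{i\phi_0}\psi_0$ and $e^{-i\gamma(-\partial_x^2+V)}e^{i\gamma^{-1/2}\alpha_1^{1/2}\phi_1}e^{i\phi_0}\psi_0$. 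A triangle inequality using unitarity of the free propagator gives a $3\varepsilon$-bound, yielding
\[
e^{-i\alpha_1(\partial_x\phi_1)^2}e^{i\phi_0}\psi_0\in\overline{\mathcal R_0(\psi_0)}.
\]

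I then iterate this procedure $k-1$ more times, at each step using the inductive hypothesis (which has been proved for all initial data in $L^2(\mathcal G)$) to absorb the newly produced multiplicative factor, to conclude $e^{i\phi}\psi_0\in\overline{\mathcal R_0(\psi_0)}$.

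The only point requiring genuine attention rather than a routine translation is the verification that \Cref{thm:limitQuantum} applies at each step: the function $\alpha_j^{1/2}\phi_j$ must lie in $C^0(\mathcal G;\R)$, be smooth on each edge, and satisfy $\varphi\,\mathrm{Dom}(H_0)\subset\mathrm{Dom}(H_0)$. The first two properties propagate through the recurrence because sums of globally continuous, edgewise-smooth functions and squares of derivatives of such functions remain globally continuous and edgewise-smooth; the third is precisely the membership condition built into \eqref{eq:defH0subspacegraph}--\eqref{eq:defHNgraph}. Once these checks are in place, no further graph-specific technicality intervenes, so I do not expect any serious obstacle beyond careful bookkeeping.
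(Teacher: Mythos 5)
Your proposal is correct and follows exactly the route the paper takes: the paper's proof of this theorem is literally the statement that it is the same as the proof of \Cref{thm:manifoldirichletneumann}, with \Cref{thm:limitQuantum} playing the role of \Cref{thm:limitDirichletNeumann} and $(\partial_x\varphi)^2$ replacing $g(\nabla_g\varphi,\nabla_g\varphi)$, which is precisely the adaptation you carry out. Your explicit check that edgewise smoothness, global continuity, and the domain-stabilization condition propagate through the recurrence is a reasonable piece of bookkeeping that the paper leaves implicit.
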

The proof of the theorem is the same as the one of \Cref{thm:manifoldirichletneumann}. Finally, note that \Cref{thm:graph} directly implies \Cref{th:smalltimeapproximatequantumgraphs}.

\bibliographystyle{alpha}
\bibliography{SchrodingerBilinear}

\end{document}